\DeclareMathAlphabet{\mathpzc}{OT1}{pzc}{m}{it}
\newcommand{\iprod}{\mathbin{\lrcorner}}
\def\ba{\begin{align}}
	\def\ea{\end{align}}
\def\bp{\begin{proof}}
	\def\ep{\end{proof}}
\theoremstyle{plain}
\newtheorem{theorem}{Theorem}[section]
\newtheorem{proposition}[theorem]{Proposition}
\newtheorem{lemma}[theorem]{Lemma}
\newtheorem{corollary}[theorem]{Corollary}
\theoremstyle{definition}
\newtheorem{definition}[theorem]{Definition}
\newtheorem{example}[theorem]{Example}
\newtheorem{remark}[theorem]{Remark}
	\def\bp{\begin{proof}}
		\def\ep{\end{proof}}
	\def\Ac{{\mathcal A}}
\begin{document}

		\title[Einstein-Yang-Mills fields]{Einstein-Yang-Mills fields in conformally compact manifolds}

		\author{Levi Lopes de Lima}
		\address{Universidade Federal do Cear\'a (UFC),
			Departamento de Matem\'{a}tica, Campus do Pici, Av. Humberto Monte, s/n, Bloco
			914, 60455-760,
			Fortaleza, CE, Brazil.}
		\email{levi@mat.ufc.br}
		\thanks{
			Supported by 
			FUNCAP/CNPq/PRONEX 00068.01.00/15.}

		\begin{abstract}
			We study the deformation theory of Einstein-Yang-Mills fields over conformally compact, asymptotically locally hyperbolic manifolds. We prove that if an Einstein-Yang-Mills field $(g_0,\omega_0)$ is trivial (which means that $g_0$ is a Poincar\'e-Einstein metric and $\omega_0$ is a flat connection on a principal bundle over the underlying manifold) and non-degenerate in the appropriate sense then any sufficiently small perturbation of its boundary data at infinity may be realized as the boundary data of some Einstein-Yang-Mills field. This result is obtained as an application of the $0$-calculus of Mazzeo and Melrose \cite{mazzeo1987meromorphic,mazzeo1988hodge} and may be viewed as a natural extension of previous results by Graham-Lee \cite{graham1991einstein}, Lee \cite{lee2006fredholm} and Usula \cite{usula2021yang}.   
		\end{abstract}

		\maketitle


		\section{Introduction}\label{intro}
		
		Let $\overline M^{n+1}$, $n\geq 2$, be a compact manifold with boundary $Y=\overline M\backslash M$, where $M$ is its interior and let $g$ be a complete metric in $M$. We say that $(M,g)$ is {\em conformally compact} if there exists a collar neighborhood $\mathcal U$ of $Y$ in which a function $\mathfrak r$ is defined so that $\mathfrak r\geq 0$, $\mathfrak r^{-1}(0)=Y$ and  $|d\mathfrak r|_{\overline g}\neq 0$ along $Y$,
		where 
		$
		\mathfrak r^2g{\big|}_{\mathcal U}=:\overline g
		$ extends to a sufficiently regular metric up to $Y$.
		We then say that $\mathfrak r$ is a {\em defining function} (for $Y$). 
		Notice that the conformal class $\partial_\infty g:=[\mathfrak r^2g|_{Y}]$ does {not} depend on which defining function is used.
		We then say that
		the conformal manifold $(Y,\partial_\infty g)$ is the {\em conformal boundary} of $(M,g)$. Alternatively, $(M,g)$ is a {\em filling} of $(Y,\partial_\infty g)$.

		One verifies that 
		$K_{\rm sec}(g)=-|d\mathfrak r|^2_{\overline g}+o(1)$ as $ \mathfrak r\to 0$, which naturally leads to appropriate refinements. In the following we represent by $\mathfrak h_0$ the round metric on the unit $n$-sphere $\mathbb S^n$.
		
		\begin{definition}\label{alh:def}
			A conformally compact manifold $(M,g)$ is {\em asymptotically locally hyperbolic} (ALH) if $|d\mathfrak r|_{\overline g}=1$ along $Y$. Equivalently, $K_{\rm sec}(g)\to -1$ as $\mathfrak r\to 0$. If, moreover, $\partial_\infty g=[\mathfrak h_0]$, the conformal class of $\mathfrak h_0$, then we say that $(M,g)$ is {\em asymptotically hyperbolic} (AH). 
		\end{definition}
		
		An interesting class of ALH manifolds appears when we restrict the interior geometry appropriately.
		
		\begin{definition}\label{pe:def}
			A ALH manifold is {\em Poincar\'e-Einstein} (PE) if $g$ is Einstein. In other words, there holds $\prescript{g}{}{\rm Ric}=-ng$ identically. 
		\end{definition}
		
		The prototypical example of a PE manifold is the hyperbolic $(n+1)$-space $(\mathbb H^{n+1},g_{\mathbb H})$. In the Poincar\'e ball model, $\mathbb H^{n+1}=\{w\in\mathbb R^{n+1}; |w|<1\}$ and $g_{\mathbb H}=\mathfrak r^{-2}dw^2$, $\mathfrak r(w)=(1-|w|^2)/2$. Hence, $\partial_{\infty}g_{\mathbb H}=[\mathfrak h_0]$, so that $(\mathbb H^{n+1},g_{\mathbb H})$ is AH as well.
		
		PE manifolds play a central role both in the Fefferman-Graham theory of conformal
		invariants \cite{fefferman2012ambient} and in the AdS/CFT correspondence \cite{witten1998antide,biquard2005ads}. In this regard, it has been proved that 
		a AH and PE manifold is isometric to $(\mathbb H^{n+1},g_{\mathbb H})$ \cite{andersson1998scalar,qing2003rigidity,li2017gap}.  
		This {\em global} rigidity of $g_{\mathbb H}$ clearly suggests the use of perturbative methods to probe the structure of nearby PE metrics on the unit ball parameterized by small deformations of $[\mathfrak h_0]$. In fact, this expectation had been previously confirmed by the following fundamental result. 
		
		\begin{theorem}[Graham-Lee \cite{graham1991einstein}]\label{gl:dich}
			Any conformal class sufficiently close to $[\mathfrak h_0]$ admits a {unique} PE filling. 
		\end{theorem}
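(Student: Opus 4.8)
The plan is to recast the Einstein condition as a quasilinear elliptic boundary problem, solve it perturbatively off the hyperbolic model by an implicit function theorem in weighted Hölder spaces, and then recover the Einstein equation from the gauge-fixed one. I would work on the unit ball $\overline M=\overline{B}^{n+1}$ with $Y=\mathbb S^n$ and take $(\mathbb H^{n+1},g_{\mathbb H})$ as the reference PE filling of $[\mathfrak h_0]$. The immediate difficulty is that $\prescript{g}{}{\rm Ric}+ng=0$ is invariant under diffeomorphisms and hence not elliptic, so first I would break the gauge. Following the DeTurck device as adapted by Graham-Lee, fix the background $t=g_{\mathbb H}$ and consider
\[
N(g)=\prescript{g}{}{\rm Ric}+ng+\delta_g^{*}\,\beta_t(g),\qquad \beta_t(g)=\delta_t g+\tfrac12\,d(\operatorname{tr}_t g),
\]
where $\delta_g^{*}$ is the symmetrized covariant derivative and $\beta_t$ the Bianchi operator relative to $t$. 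A symbol computation shows $N$ is quasilinear elliptic, and clearly $N(g_{\mathbb H})=0$.

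Next I would linearize. The operator $P:=DN|_{g_{\mathbb H}}$ is, up to a positive constant, the Lichnerowicz Laplacian $\Delta_L$ on symmetric $2$-tensors plus a zeroth-order curvature term fixed by the hyperbolic background; it is a formally self-adjoint, uniformly degenerate elliptic operator (a $0$-operator in the sense of \cite{mazzeo1988hodge}). The analytic core is to invert $P$. I would introduce weighted Hölder spaces $C^{k,\a}_\delta$ adapted to the hyperbolic ends, compute the indicial roots of $P$ at $Y$, and choose the weight $\delta$ in the gap between the two indicial roots that bracket the $L^2$-threshold. For such $\delta$, the parametrix construction of the $0$-calculus \cite{mazzeo1987meromorphic,mazzeo1988hodge} shows $P\colon C^{k+2,\a}_\delta\to C^{k,\a}_\delta$ is Fredholm of index zero. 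Triviality of the kernel would follow from an integration-by-parts identity in this weighted space: a decaying element of the kernel is transverse-traceless and satisfies a Bochner-type inequality that, using the strict negativity of the curvature of $g_{\mathbb H}$, forces it to vanish. Index zero then upgrades this to an isomorphism.

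With $P$ invertible, for each conformal class $\hat g$ near $[\mathfrak h_0]$ I would build an approximate PE filling $g_{\hat g}$ realizing $[\hat g]$ as conformal infinity to high order, and then solve $N(g_{\hat g}+h)=0$ for a unique small $h\in C^{k+2,\a}_\delta$ via the implicit function theorem. It remains to check that $g:=g_{\hat g}+h$ is genuinely Einstein. Applying the contracted second Bianchi identity to $N(g)=0$ shows that $\omega:=\beta_t(g)$ satisfies a homogeneous second-order elliptic equation with decay; the uniqueness part of the same Fredholm theory forces $\omega\equiv 0$, whence $\prescript{g}{}{\rm Ric}+ng=0$. Uniqueness of the filling near $g_{\mathbb H}$ is built into the implicit function theorem, once one checks that any nearby PE metric can be put in the Bianchi gauge.

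I expect the main obstacle to be the invertibility of $P$. Pinning down the indicial roots and, above all, proving that the kernel is \emph{trivial} rather than merely finite-dimensional is the delicate point, since this is exactly where the global rigidity of $g_{\mathbb H}$ enters analytically: the integration-by-parts argument succeeds only in the precise weighted space where the boundary terms vanish, and this dictates the single choice of $\delta$ for which Fredholmness and triviality of the kernel hold at once. Securing the polyhomogeneity of solutions up to $Y$, needed to make the boundary-term analysis rigorous, is the other technical hurdle handled by the $0$-calculus.
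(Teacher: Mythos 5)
Your proposal is correct and follows essentially the same route the paper takes: the paper recovers Graham--Lee as the metric-sector special case of Theorem \ref{main}, using the same Bianchi/DeTurck gauge-breaking, invertibility of the gauged linearization $\prescript{g_0}{}{\Delta}_{(n)}$ via $0$-calculus Fredholmness plus a McKean-type kernel estimate (Corollary \ref{cor:rel}), the implicit function theorem, and the Bianchi-identity argument that converts solutions of the gauged equation back into genuine Einstein metrics. The only cosmetic difference is that you construct high-order approximate fillings in the style of the original Graham--Lee argument, whereas the paper simply inserts the weighted extension $e_x(\Gamma)$ of the boundary deformation into the nonlinear map before applying the implicit function theorem.
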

		
		Uniqueness here means up to the action of the gauge group of diffeomorphisms of the ball fixing the boundary. Further contributions in this direction, where $g_{\mathbb H}$ is replaced by more general background metrics under a suitable non-degeneracy assumption, may be found in \cite{lee2006fredholm,biquard2000metriques,biquard2006parabolic,mazzeo2006maskit,biquard2011nonlinear,bahuaud2020geometrically,fine2022non}.
		
		Our aim here is to properly extend the validity of the nonlinear Dirichlet problem at infinity in Theorem \ref{gl:dich} to the Einstein-Yang-Mills setting, again by perturbative methods. 
		More precisely, and inspired by \cite{usula2021yang}, we consider a configuration $(g,\omega)$ formed by a metric $g$ over $M$ as above (in particular, $(M,g)$ is assumed to be conformally compact) and  a connection $\omega$ on a principal bundle over $M$. As in \cite{bleecker2005gauge} we set up a variational problem on the space of all such configurations whose critical points, the so-called {\em Einstein-Yang-Mills fields}, satisfy a coupled system of equations involving their curvature invariants (the Ricci tensor of $g$ and the curvature $2$-form of $\omega$).  Trivial solutions  of this system are obtained if $g$ is PE and $\omega$ is flat (in the sense that its curvature vanishes identically). Roughly, our main result says that whenever a trivial solution, say $(g_0,\omega_0)$, is {\em non-degenerate} in a suitable sense then any sufficiently small perturbation of $(\partial_\infty g_0,\omega_0\big|_{Y})$ may be realized as the boundary data at infinity of a Einstein-Yang-Mills field defined on $M$, which is unique up to the action of the group of gauge transformations fixing the boundary data. We refer to Theorem \ref{main} for a precise statement. 
		
		In our setting, non-degeneracy means that both $g_0$ and $\omega_0$ are separately non-degenerate in the sense of \cite{graham1991einstein} and \cite{usula2021yang}, respectively; see Definition \ref{nondeg:conf}. We remark that the notion of non-degeneracy in \cite{usula2021yang} is more general than ours as it does not require flatness for  $\omega_0$; see Remarks  \ref{inc:discuss} and \ref{non:flat}. Our more specialized definition reflects the fact explored here that, as shown in Lemma \ref{differ:cal}, the linearization of  the Einstein-Yang-Mills field equations at a trivial configuration conveniently decouples as a sum of its metric and connection linearizations (this relies on the fact that the corresponding stress-energy tensor, the metric variation of the Yang-Mills self-action density, depends {\em quadratically} on the curvature of the connection; see Remark \ref{K:quad}). 
		In particular, if we take $(M,g_0)$ to be a non-degenerate PE manifold, $\omega_0$ the canonical flat connection on the trivial principal bundle $M\times\mathbb S^1$ and restrict ourselves to perturbations only in the metric direction then Theorem \ref{main} recovers \cite[Theorem A]{lee2006fredholm}, where a  large class of PE manifolds was shown to be non-degenerate; also note that \cite{fine2022non} exhibits a new class of non-degenerate PE metrics in dimension four. On the other hand, if we fix a PE manifold $(M,g_0)$ satisfying $H^1(\overline M,Y)=\{0\}$ then the canonical flat connection $\omega_0$ on a trivial principal bundle over $M$ is known to be non-degenerate, so if we vary only in the direction of the connection then Theorem \ref{main} recovers \cite[Corollary 39]{usula2021yang}.  We insist, however, that for more general deformations Theorem \ref{main} certainly produces genuinely coupled Einstein-Yang-Mills fields around a trivial solution. In this regard, we refer to Theorem \ref{witten}, which is our version of  Usula's result just mentioned. 
		
		As expected from past experience, when trying to implement the appropriate perturbative scheme in our setting, two complications immediately arise. The first one relates to the existence of a large gauge symmetry group for the field equations, which in particular prevents them from being elliptic. To overcome this we combine the schemes in \cite{biquard2000metriques,mazzeo2006maskit,usula2021yang} so that by imposing the {\em Bianchi-Coulomb gauge} we are able to construct a local slice for the gauge action along which ellipticity (i.e. pointwise invertibility of the principal symbol) of the linearized field equations is restored. Next, since this gauged linearized operator acts on sections of vector bundles over a conformally compact manifold, ellipticity alone does not suffice to ensure its Fredholmness, much less its invertibility, in a suitable weighted H\"older scale. 
		The key  point here is that the principal symbol of this linearized operator is essentially determined by the contravariant metric, which vanishes along the conformal boundary; see Example \ref{lap:exp:loc}, where this phenomenon is explicitly described for the scalar Laplacian. 
		Thus, interior ellipticity should somehow be complemented with a notion of ellipticity along the conformal boundary, which involves the invertibiliby of certain ``model'' operators. There are at least two ways to overcome this difficulty. First, we may adapt the ``low tech'' methods in \cite{graham1991einstein,andersson1993elliptic,andersson1996solutions,lee2006fredholm}, which for instance have been systematically employed to establish the existence of static/stationary space-time negative cosmological constant solutions of Einstein-Yang-Mills equations, possibly of black hole type and coupled with matter fields (see the series of papers culminating in \cite{chrusciel2018non}). Second, we may simply apply the powerful $0$-calculus for uniformly degenerate elliptic operators put forward in \cite{mazzeo1988hodge,mazzeo1987meromorphic}, whose variants have been extensively used over the years, notably in the deformation theory of Einstein metrics; see \cite{biquard2006parabolic,biquard2011nonlinear,mazzeo2006maskit,bahuaud2020geometrically,usula2021yang,fine2022non} for a sample of such applications. We point out that yet another route to the analytical machinery used here derives from the pseudo-differential calculus on manifolds with a ``Lie structure at infinity'' \cite{ammann2004geometry,ammann2007pseudodifferential,carvalho2018fredholm}. In this work 
		we have chosen to frame our approach in the $0$-calculus setting not only because it seems best suited for further developments but also because its implementation naturally leads to the $L^2$ non-degeneracy requirement which is so ubiquitous in applications (see Remark \ref{non-deg:req}).      
		
		This paper is organized as follows. In Section \ref{fred:0-calc} we recall the basics of the $0$-calculus as applied to generalized Laplacians. This is used in Section \ref{inv:oper} to establish the mapping properties (Fredholmness and invertibility) of certain Laplace-type operators arising in the gauge theory of Einstein-Yang-Mills fields considered in Section \ref{eym:disc}. After the Bianchi-Coulomb gauge  fixing is carried out in Section \ref{fix:gauge}, we present the proof of our main result (Theorem \ref{main}) in Section \ref{main:res}. We also include a final Section \ref{further}, where possible extensions of our main theorem are briefly discussed.    
		
		
		\section{Fredholmness for generalized Laplacians in the $0$-calculus}
		\label{fred:0-calc}
		
		Roughly speaking, the $0$-calculus has been designed to handle the mapping properties of {\em uniformly degenerate} operators in ALH manifolds. That this kind of degeneracy certainly occurs for {\em geometric} differential operators should cause no surprise as we have seen that only the conformal structure survives as $\mathfrak r\to 0$. 
		The purpose of this section is precisely to describe how the general theory in \cite{mazzeo1987meromorphic,mazzeo1988hodge} may be applied to a class of second order elliptic operators appearing frequently  in geometric applications. Expositions of this material at a foundational level may be found in \cite{lauter2003pseudodifferential,albinnotes2008,hintz2021elliptic}. More streamlined presentations, decorated with interesting applications to Geometry, appear in \cite{mazzeo2006maskit,usula2021yang,fine2022non}. Needless to say, our account here owes a lot to these sources.    
		
		As a first step towards our goal we must
		understand how geometric objects degenerate as we approach the conformal boundary. For this we must properly choose the defining function, which is accomplished by the following well-known result.
		
		\begin{proposition}
			Let $(M^{n+1},g)$ be a ALH manifold with conformal boundary $(Y,\partial_\infty g)$. Then for any metric $h_0\in\partial_\infty g$ there exists a collar neighborhood $\mathcal U$ of $Y$ and a {\em special} defining function $x:\mathcal U\to \mathbb R^+$ so that 
			\begin{equation}\label{met:spec}
				x^2g|_{\mathcal U}=\overline g, \quad \overline g=dx^2+h(x),
			\end{equation}
			where $h(x)$ is a one-parameter smooth family of metrics in $Y$ satisfying $h(0)=h_0$. 
			In particular, $|dx|_{\overline g}=1$ on $\mathcal U$.
		\end{proposition}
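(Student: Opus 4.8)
The plan is to reduce the construction to a single first-order scalar equation for the conformal factor relating $x$ to a convenient background defining function, and then to recognize the normal form in \eqref{met:spec} as an automatic consequence of the resulting gradient condition. First I would note that the two requirements in \eqref{met:spec} are together equivalent to the single condition $|dx|_{x^2g}=1$ on $\mathcal U$. Indeed, suppose $x$ is a defining function with $|dx|_{\overline g}=1$ for $\overline g:=x^2g$. Then $\partial_x:=\nabla^{\overline g}x$ has unit length, and flowing a coordinate system $(y^i)$ on $Y$ along $\partial_x$ produces coordinates $(x,y)$ in a collar in which, by construction, $y^i$ is constant along the flow lines while $x$ is an affine parameter of the flow (since $\tfrac{d}{dt}x=dx(\nabla^{\overline g}x)=|dx|^2_{\overline g}=1$). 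In these coordinates $\overline g(\partial_x,\partial_x)=|dx|^2_{\overline g}=1$ and $\overline g(\partial_x,\partial_{y^i})=dx(\partial_{y^i})=\partial_{y^i}x=0$, so all cross terms vanish and $\overline g=dx^2+h(x)$ with $h(x)$ the induced metric on the slice $\{x=\mathrm{const}\}$. Thus it suffices to produce such an $x$ realizing $h_0$ on $Y$.

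To produce it, I would start from any defining function and rescale it to an adapted background $\mathfrak r$ with $\mathfrak r^2g|_{TY}=h_0$ on $Y$ (possible since $h_0\in\partial_\infty g$ differs from the given boundary representative by a conformal factor, which we extend arbitrarily into the collar). Writing $x=\mathfrak r\,e^{w}$ and using $g=\mathfrak r^{-2}\widehat g$ with $\widehat g:=\mathfrak r^2g$ smooth up to $Y$, a direct computation gives
\[
|dx|^2_{x^2g}=|d\mathfrak r+\mathfrak r\,dw|^2_{\widehat g}.
\]
Setting this equal to $1$ and expanding, the condition $|dx|_{x^2g}=1$ becomes the first-order equation
\[
2\,\widehat g(d\mathfrak r,dw)+\mathfrak r\,|dw|^2_{\widehat g}=\frac{1-|d\mathfrak r|^2_{\widehat g}}{\mathfrak r}.
\]
Here the ALH hypothesis enters decisively: since $|d\mathfrak r|_{\widehat g}=1$ along $Y$, the numerator on the right vanishes on $\{\mathfrak r=0\}$, so the right-hand side extends smoothly up to $Y$. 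The boundary condition I would impose is $w|_Y=0$, which guarantees $x^2g|_{TY}=e^{2w}\mathfrak r^2g|_{TY}=h_0$ on $Y$, i.e.\ $h(0)=h_0$.

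The hard part will be the solvability of this equation, and this is where I would invoke the noncharacteristic first-order (Hamilton--Jacobi) theory. In coordinates adapted to $Y$ the coefficient of the normal derivative $\partial_{\mathfrak r}w$ in the left-hand side is $\widehat g(d\mathfrak r,d\mathfrak r)=|d\mathfrak r|^2_{\widehat g}$, which equals $1$ on $Y$; hence the equation is noncharacteristic with respect to the initial surface $Y$, and the method of characteristics yields a unique smooth solution $w$, with $w|_Y=0$, on a (possibly smaller) collar neighborhood. Setting $x=\mathfrak r\,e^{w}$ then gives a defining function with $|dx|_{x^2g}=1$ throughout $\mathcal U$, and the first paragraph upgrades this to the normal form $x^2g|_{\mathcal U}=dx^2+h(x)$ with $h(0)=h_0$, while $|dx|_{\overline g}=1$ holds by construction. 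The only genuine subtleties are the local (rather than global) nature of the existence afforded by characteristics, which accounts for shrinking $\mathcal U$, and verifying that $h(x)$ depends smoothly on $x$, which follows from the smooth dependence of the characteristic flow on its parameter together with the smoothness of $w$ up to the boundary.
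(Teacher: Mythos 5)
The paper itself offers no proof of this proposition --- it is recorded as a standard fact about conformally compact ALH metrics going back to Graham--Lee \cite{graham1991einstein} --- and your argument is precisely the canonical proof of that fact. Your three steps (reducing the normal form to the single eikonal condition $|dx|_{x^2g}=1$ via the gradient flow of $x$, the ansatz $x=\mathfrak r\,e^{w}$ turning this condition into a first-order PDE whose right-hand side $(1-|d\mathfrak r|^2_{\widehat g})/\mathfrak r$ extends smoothly to $Y$ exactly because of the ALH hypothesis, and solvability of the resulting non-characteristic Cauchy problem by the method of characteristics) are all correct and constitute the expected, standard proof, so there is nothing to fix.
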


		With a special defining function $x$ as above at hand,
		the general strategy in the $0$-category is to somehow ``desingularize'' geometric objects (vector fields, differential forms, etc.) by viewing them as ``smooth'' sections of certain vector bundles defined up to $Y$ which are designed to replace their classical counterparts, with the added bonus that the corresponding bundles remain isomorphic in the interior. More precisely, near infinity we may introduce coordinates $z=(x,y)$, where $y=(y_1,\dots,y_n)$ are local coordinates along $Y$. For convenience we assume that $x$ has been extended to the whole of $\overline M$ so that it takes  a {\em constant} positive value outside $\mathcal U$. We may start by  considering $\mathcal X_0(X)$, the space of $0$-vector fields, as being formed by those elements of $\mathcal X(\overline M)$, the Lie algebra of smooth vector fields on $\overline M$, vanishing along $Y$. Thus, an application of Serre-Swann theorem leads to the identification $\mathcal X_0(M)=C^{\infty}(\prescript{0}{}{TM})$, the space of smooth sections of $\prescript{0}{}{TM}$, the $0$-tangent bundle of $M$. Locally near some $z\in Y$, 
		\[
		\mathcal X_0(M)={\langle x\partial_x}, {x\partial_{y_1}, \dots, x\partial_{y_n}}\rangle_{C^\infty(\overline M)}.
		\]
		By dualizing this we obtain, again near infinity,
		\[
		\mathcal A^1_0(M)={\langle{dx}/{x}},{{dy_1}/{x},\dots,{dy_n}/{x}}\rangle_{C^\infty(\overline M)},
		\]
		so that $\mathcal A^1_0(M)=C^\infty(\prescript{0}{}{T^*M})$, the space of $0$-$1$-forms. In general, given an integer $k\geq 0$ we denote by $S^k$ and $\Lambda^k$ the standard operations of symmetrization and anti-symmetrizations, respectively. Thus,  $\mathcal A^k_0(M)=C^\infty({\Lambda}^k\prescript{0}{}{T^*M})$, the space of $0$-$k$-forms. 
		Also, if $\overline{E}$ is a vector bundle over $\overline M$ (whose restriction to $M$ we will always denote by its unbarred version $E$) we may consider $\mathcal A^k_0(M,\overline E)=C^\infty({\Lambda}^k\prescript{0}{}{T^*M}\otimes {\overline{E}})$, the space of $0$-$k$-forms with values in $\overline{E}$. 
		Finally, we may also consider $\mathcal S_0^k(M)=C^\infty(S^k{\prescript{0}{}{T^*M}})$. 
		It then follows from (\ref{met:spec}) that $g$ defines a positive definite element in $\mathcal S_0^2(M)$, a $0$-metric. Roughly, this means that $g$ extends to $\overline M$ with a ``double pole'' along $Y$.

		We now single out the class of elliptic operators we consider here. 
		
		\begin{definition}\label{lap:type}
			Let $(M,g)$ be a (not necessarily ALH) Riemannian manifold and let $ E$ be a metric vector bundle over $M$.   We say that a linear operator $\mathcal L:C^\infty(E)\to C^\infty(E)$ is a {\em generalized Laplacian} if its 
			principal symbol is
			\begin{equation}\label{symb:ell}
				{\rm Symb}_{\mathcal L}(p,\xi)=|\xi|^2_{ g}= g^{ij}(p)\xi_i\xi_j, \quad (p,\xi)\in T^* M.
			\end{equation}
		\end{definition}
		
		In particular, any generalized Laplacian is elliptic. 
		
		\begin{proposition}\label{lap:type:2}\cite[Proposition 2.5]{berline2003heat}
			If $\mathcal L:C^\infty(E)\to C^\infty(E)$ is a {generalized Laplacian} as above then there exists a unique metric connection $\nabla:C^\infty( E)\to C^\infty(T^* M\otimes E)$ and a zero order operator $\mathcal R\in C^\infty({{\rm End}\, E})$, the Weitzenb\"ock potential, such that  $\mathcal L=\nabla^*\nabla+\mathcal R$, where 
			$\nabla^*:C^\infty(T^* M\otimes E)\to C^\infty( E)$ is the formal adjoint of $\nabla$, so that $\nabla^*\nabla$ is the corresponding {\em Bochner Laplacian}.
		\end{proposition}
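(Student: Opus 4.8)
The plan is to reduce everything to a first order symbol computation. Since $E$ is a metric bundle it carries at least one metric connection; fix such a $\nabla_0$ once and for all and set $P:=\mathcal L-\nabla_0^*\nabla_0$. The Bochner Laplacian of \emph{any} connection has principal symbol $|\xi|^2_g\,\mathrm{id}_E$ — indeed the symbol of $\nabla_0$ is, up to the usual factor, $\xi\otimes(\cdot)$ and that of its formal adjoint $\nabla_0^*$ is minus the $g$-contraction — so by Definition \ref{lap:type} the symbols of $\mathcal L$ and $\nabla_0^*\nabla_0$ coincide and $P$ is a \emph{first order} operator. Everything then rests on choosing a correction $a$, a $1$-form valued in the skew-symmetric endomorphisms $\mathfrak{so}(E)$, so that the metric connection $\nabla:=\nabla_0+a$ makes $\mathcal L-\nabla^*\nabla$ of order zero.

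The key is how $\nabla^*\nabla$ depends on $a$. Writing $\nabla^*\nabla=-\sum_i(\nabla_{e_i}\nabla_{e_i}-\nabla_{\nabla^{\mathrm{LC}}_{e_i}e_i})$ in a local $g$-orthonormal frame $\{e_i\}$ and substituting $\nabla=\nabla_0+a$, the contributions that are genuinely first order in the derivative and linear in $a$ assemble into $-2\sum_i a(e_i)\nabla_{0,e_i}$, whose principal symbol is the endomorphism $-2\,a(\xi^\sharp)$, with $\xi^\sharp$ the $g$-dual of $\xi$; all the remaining terms (those differentiating $a$, the $\nabla^{\mathrm{LC}}$ correction, and the quadratic $a\cdot a$ term) are of order zero. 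Hence
\begin{equation*}
\mathrm{Symb}^{(1)}_{\nabla^*\nabla-\nabla_0^*\nabla_0}(\xi)=-2\,a(\xi^\sharp),
\end{equation*}
and the assignment $a\mapsto\big(\xi\mapsto -2\,a(\xi^\sharp)\big)$ is a bundle isomorphism from endomorphism-valued $1$-forms onto endomorphism-valued symbols linear in $\xi$. Thus there is exactly one $a$ for which $-2\,a(\xi^\sharp)$ equals the first order symbol of $P$, \emph{provided} that symbol lies in the image of the skew-valued $1$-forms.

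The one genuinely substantive point — and the step I expect to be the main obstacle — is precisely this proviso: one must know that $\mathrm{Symb}^{(1)}_P(\xi)$ is skew-adjoint for every $\xi$, so that the solution $a$ takes values in $\mathfrak{so}(E)$ and $\nabla=\nabla_0+a$ is again a metric connection. This is where I would invoke that $\mathcal L$ is formally self-adjoint, the property shared by all the geometric Laplacians of Weitzenb\"ock type that the proposition is designed to cover and the natural setting here, since $\nabla^*\nabla$ is itself always formally self-adjoint. Granting it, $P=\mathcal L-\nabla_0^*\nabla_0$ is a formally self-adjoint first order operator, and the first order symbol of any such operator is skew-adjoint because passing to the adjoint carries $\mathrm{Symb}^{(1)}(\xi)$ to $-\mathrm{Symb}^{(1)}(\xi)^*$. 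With skewness secured, the unique $a$ produced above yields the desired metric connection $\nabla$, and I would simply define $\mathcal R:=\mathcal L-\nabla^*\nabla\in C^\infty(\mathrm{End}\,E)$. Uniqueness comes from the same computation: if $\nabla$ and $\nabla'=\nabla+a$ are metric connections both giving a zeroth order remainder, subtracting forces $-2\,a(\xi^\sharp)\equiv 0$, whence $a=0$, $\nabla=\nabla'$, and finally $\mathcal R=\mathcal R'$.
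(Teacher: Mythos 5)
The paper does not actually prove this proposition: it is quoted from Berline--Getzler--Vergne, so there is no internal argument to compare yours against. It is worth noting, though, that the cited result is not literally the statement above. BGV's Proposition~2.5 produces, for an arbitrary generalized Laplacian, a \emph{unique} connection $\nabla$ and potential with $\mathcal L=-\mathrm{tr}_g\nabla^2+\mathcal R$; it neither assumes a fiber metric nor claims the connection is metric (and for a non-metric connection the trace Laplacian and the Bochner Laplacian $\nabla^*\nabla$ differ by a first-order term, so the two formulations are not interchangeable). The word ``metric'' in the paper's statement is precisely the extra content, and your proof locates the crux correctly: the correction $a$ solving $-2a(\xi^\sharp)=\mathrm{Symb}^{(1)}_P(\xi)$ is skew-valued, hence $\nabla_0+a$ is again metric, exactly when the first-order symbol of $P=\mathcal L-\nabla_0^*\nabla_0$ is skew-adjoint, i.e.\ when $\mathcal L-\mathcal L^*$ has order zero; formal self-adjointness of $\mathcal L$ is the natural sufficient condition.

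The catch is that formal self-adjointness is \emph{not} a hypothesis of the proposition as stated (the paper only imposes it several paragraphs later), so as a proof of the literal statement your argument has a gap --- but it is a gap that cannot be closed, because the literal statement is false. On the trivial real line bundle the only metric connection is $d$, so $\mathcal L=\Delta_g+X$, with $\Delta_g$ the scalar Laplacian and $X$ a nonvanishing vector field, is a generalized Laplacian in the sense of Definition~\ref{lap:type} admitting no decomposition $d^*d+\mathcal R$ with $\mathcal R$ of order zero. So the defect lies in the statement, not in your proof: with self-adjointness added, your symbol computation, the unique solvability of $-2a(\xi^\sharp)=\mathrm{Symb}^{(1)}_P(\xi)$ among skew-valued $1$-forms, and the uniqueness of $(\nabla,\mathcal R)$ are all correct, and this hypothesis is satisfied by every operator to which the paper applies the proposition (Hodge, Lichnerowicz, twisted Hodge). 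The alternative repair --- proving BGV's actual statement by matching first-order coefficients, with no self-adjointness but also no metricity --- is cheaper but does not yield what the paper's wording asserts.
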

		
		\begin{definition}\label{geom:op}
			A generalized Laplacian $\mathcal L$ is {\em geometric} if:
			\begin{itemize}
				\item 
				the metric bundle $E$ and the corresponding compatible connection $\nabla$ (as in the previous proposition) are associated to the principal orthonormal frame bundle of $( M, g)$ via some representation of the orthogonal group;
				\item $\mathcal R$ is a (fiberwise) self-adjoint endomorphism constructed out of the curvature tensor of $\nabla$ (by means of contractions).  
			\end{itemize} 
		\end{definition}
		
		Examples of geometric Laplacians include the Hodge Laplacian (acting on differential forms) and the Lichnerowicz Laplacian (acting on symmetric $2$-covariant tensors).

		In general, if $E$ and $F$ are vector bundles over a ALH manifold $(M,g)$ and $P:C^\infty(E)\to C^\infty(F)$ is a differential operator then the mapping properties of $P$ (Fredholmness, invertibility, etc.) are amenable of being treated by the $0$-calculus if 
		near infinity $P$ may be expressed 
		as a (matrix of) polynomial expressions in elements of $\mathcal X_0(M)$ with coefficients in $C^\infty(\overline M)$. This class of operators, which we collectively represent by ${\rm Diff}_0^\bullet(\overline E,\overline F)$, where the bullet indicates the existence of a natural grading, are called $0$-{\em differential operators}. 
		This is certainly the case of a generalized Laplacian $\mathcal L$, for which there holds  
		\begin{equation}\label{gen:lap:exp}
			\mathcal L\big|_{\mathcal U}=\sum_{j+|\beta|\leq 2}a_{j,\beta}(x,y)(x\partial_x)^j(x\partial_y)^\beta.
		\end{equation}
		Thus, $\mathcal L\in {\rm Diff}_0^2(\overline E):={\rm Diff}_0^2(\overline E,\overline E)$. 
		
		\begin{remark}\label{rest:ext}
			Elements of ${\rm Diff}_0^\bullet(\overline E,\overline F)$ define differential operators in the interior (in particular, it makes sense to define an ``interior'' symbol and hence discuss ellipticity for such operators). 
			Also, ``genuine'' differential operators on $\overline M$ (i.e. elements of  ${\rm Diff}^\bullet(\overline E,\overline F)$) restrict to $0$-differential operators but the converse extension statement is not true in general. Thus, it is convenient to adopt here the convention that any $0$-differential operator which is not  geometric (e.g. the covariant derivative associated to a connection on an ``external'' principal bundle over $M$) is the restriction of a ``genuine'' object defined on $\overline M$.  As we shall see in the specific case of covariant derivatives, this amounts to requiring that the underlying connection is genuine and has the practical effect of turning the associated generalized Laplacians increasingly geometric as we approach the boundary; compare with \cite[Remark 22]{usula2021yang} and the proof of 
			Theorem \ref{usula:inv}.
		\end{remark}

		\begin{example}\label{lap:exp:loc}
			Starting with (\ref{met:spec}) one computes that the scalar Laplacian  satisfies
			\[
			\prescript{g}{}{\Delta}{\big|}_{\mathcal U} 
			=  -{x^2}(\partial_{xx}+\Delta_{h_0}+O(x)+(1-n)x^{-1}\partial_x),
			\]
			which makes the degeneracy of its principal symbol manifest as $x\to 0$ (have in mind that $g^{xx}=x^2+o(1)$). However, using that $x^{2}\partial_{xx}=(x\partial_x)^2-x\partial_x$ we get
			\begin{equation}\label{lap:ind}
				\prescript{g}{}\Delta{\big|}_{\mathcal U}=-(x\partial_x)^2+nx\partial_x-{x^2\Delta_{h_0}}+o(1),
			\end{equation}
			confirming that $\prescript{g}{}{\Delta}\in {\rm Diff}_0^2(\underline{\mathbb R})$, where $\underline{\mathbb R}$ is the trivial line bundle. Notice that our sign convention for the scalar Laplacian is such that it satisfies (\ref{symb:ell}) and hence is a generalized Laplacian.  
		\end{example}
		
		We now look at the functional spaces where the mapping properties of a generalized Laplacian should  be considered. Thus, if $(M,g)$ is an ALH space as above fix a collar neighborhood of $X$ on which a special defining function $x$ is available. As a reference functional space we take
		$C^{0,\alpha}_0$, the standard H\"older space of functions on $M$ endowed with the usual norm. 
		We then define $C_{0}^{k,\alpha}$ to be the Banach space of all functions $u$ such that 
		\[
		{(x\partial_x)^j}{(x\partial_y)^\beta}u\in
		C_0^{0,\alpha}, \quad j+|\beta|\leq k.
		\]
		Finally, if $\delta\in\mathbb R$ we set
		\[
		x^\delta C_{0}^{k,\alpha}=\left\{
		u: x^{-\delta}u\in 	C_{0}^{k,\alpha}
		\right\}.
		\]
		Similarly, we may define $x^\delta C_{0}^{k,\alpha}(\overline E)$, where $\overline E$ is a vector bundle over $\overline M$ (the previous case corresponds to taking $\overline E$ as the trivial line bundle).

		It is clear from (\ref{gen:lap:exp}) that any generalized Laplacian defines a bounded map
		\[
		\mathcal L:x^\delta C_{0}^{k,\alpha}(\overline E)\to x^\delta C_{0}^{k-2,\alpha}(\overline E), 
		\]
		for any $\delta$. The question remains of checking whether this map is invertible, or at least Fredholm, for some $\delta$. Simple examples show that interior ellipticity does not suffice to accomplish this. In fact,  a complementary notion of ellipticity along $Y$ is required which involves the consideration of a certain ``model operator''. 
		In terms of the representation (\ref{gen:lap:exp}), this {\em normal operator} is given by
		\begin{equation}\label{norm:rep:def}
			\mathcal N_{(x_0,y_0)}(\mathcal L)=\sum_{j+|\beta|\leq 2}a_{j,\beta}(0,0)(t\partial_t)^j(t\partial_\xi)^\beta.
		\end{equation}
		In general, it may be viewed as acting on sections of the trivial bundle $T^+_{(x_0,y_0)}\overline M\times \overline E_{(x_0,y_0)}$ over $T^+_{(x_0,y_0)}\overline M$, the inward pointing half-space of $T_{(x_0,y_0)}\overline M$, which is naturally endowed with coordinates $(t,\xi)$, $t\geq 0$, $\xi\in\mathbb R^n$, and the hyperbolic metric  $t^{-2}(dt^2+d\xi^2)$. 
		In particular, the normal operator defines a bounded map
		\[
		\mathcal N_{(x_0,y_0)}(\mathcal L):t^\delta H^{k}_0(\overline E_{(x_0,y_0)})\to t^\delta H^{k-2}_0(\overline E_{(x_0,y_0)}),
		\]		
		where the weighted $0$-Sobolev spaces above are defined as follows. We first set 
		$L^2_0(\overline E_{(x_0,y_0)})$ to be the space of measurable sections of 
		$\overline E_{(x_0,y_0)}\to T^+_{(x_0,y_0)}M$ which are $L^2$ with respect to the measure $d{\rm vol}_0:=t^{-1}dt\,d\xi$ and  define
		\[
		H^{k}_0(\overline E_{(x_0,y_0)})=\left\{u:(t\partial_t)^j(t\partial_\xi)^\beta u\in L^2_0(\overline E_{(x_0,y_0)})\right\},
		\]  
		and finally set
		\[
		t^\delta H^{k}_0(\overline E_{(x_0,y_0)})=\left\{u: t^{-\delta}u\in H^{k}_0(\overline E_{(x_0,y_0)})\right\}.
		\]
		An entirely similar procedure yields weighted $0$-Sobolev spaces $x^\delta H^{k}_0(\overline F)$, where $\overline F$ is a vector bundle over an ALH space $(M,g)$ endowed with a special defining function $x$. In this more general context we set  
		$x^\delta H^{k}_0:=x^\delta H^{k}_0(\underline{\mathbb R})$. 
		
		\begin{remark}\label{emb:theo}
			The weighted H\"older and Sobolev spaces introduced above satisfy suitable embedding theorems and we refer to \cite[Lemma 8]{usula2021yang} for a precise description. These embeddings will be extensively  used below without further notice.  
		\end{remark}
		
		Besides the normal operators above, we also 		
		make use of a simpler ``model'', the {\em indicial family}, which is given by 
		\[
		\mathcal I_{(x_0,y_0)}(\mathcal L)(\zeta)=\sum_{j\leq 2}a_{j,0}(0,0)\zeta^j, \quad \zeta\in\mathbb C. 
		\]		
		Note that this is a polynomial in $\zeta$ with coefficients in $C^\infty({\rm End}(\overline E_{(x_0,y_0)}))$. Actually, the ellipticity of $\mathcal L$  gives that $a_{(2,0)}(0,0)\in C^\infty({\rm Iso}(\overline E_{(x_0,y_0)}))$, so there are at most finitely many {\em indicial roots} (i.e. $\zeta\in\mathbb C$ such that $\mathcal I_{(x_0,y_0)}(\mathcal L)(\zeta)$ is {\em not} invertible). 
		
		In principle, the indicial roots may vary with $(x_0,y_0)$, which complicates the ensuing analysis considerably.
		Fortunately, this is never the case for geometric generalized Laplacians, as shown by the next result which follows readily from (\ref{norm:rep:def}); see also \cite[Remark 2.1]{fine2022non}. 
		
		\begin{proposition}\label{geom:lap:norm}
			If $\mathcal L$ is a {\em geometric} generalized Laplacian  and 
			$(x_0,y_0)\in X$, then $\mathcal N_{(x_0,y_0)}(\mathcal L)$ may be identified to the same operator acting on (the corresponding {geometric} bundle over) $\mathbb H^{n+1}$.
		\end{proposition}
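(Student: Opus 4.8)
The plan is to show that the coefficient–freezing defining $\mathcal N_{(x_0,y_0)}(\mathcal L)$ in (\ref{norm:rep:def}) reproduces, term by term, the local $0$-expansion of the \emph{same} geometric Laplacian computed for the exact hyperbolic metric in the upper half-space model. Fix the boundary point $p=(x_0,y_0)$, so that $x_0=0$, and work in the collar coordinates $(x,y)$ attached to the special defining function furnished by (\ref{met:spec}), so that $x^2g=dx^2+h(x)$ with $h(0)=h_0$. After a constant linear change of the $\xi$-variables one may assume $h_0(y_0)=\delta$ is Euclidean; this is an isometry carrying the frozen model metric $t^{-2}(dt^2+h_0(y_0)\,d\xi^2)$ to the standard upper half-space metric $t^{-2}(dt^2+d\xi^2)$ of $\mathbb{H}^{n+1}$. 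It therefore suffices to prove that $a_{j,\beta}(0,y_0)$ agrees with the corresponding coefficient of $\mathcal L$ for the metric $x^{-2}(dx^2+d\xi^2)$, expressed in the $0$-frame $\{x\partial_x,x\partial_{y_1},\dots,x\partial_{y_n}\}$.

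The decisive observation is that, read off in this $0$-frame, the $0$-metric $g$ and all of its $0$-derivatives coincide at $x=0$ with those of the hyperbolic $0$-metric. Indeed, in this frame the Gram matrix of $g$ has the entries $g(x\partial_x,x\partial_x)=1$, $g(x\partial_x,x\partial_{y_i})=0$ and $g(x\partial_{y_i},x\partial_{y_j})=h_{ij}(x,y)$, all smooth up to $x=0$, and any nonempty product $(x\partial_x)^j(x\partial_{y})^\beta$ with $j+|\beta|\ge 1$ applied to a function smooth up to $x=0$ carries an overall factor of $x$ and hence vanishes there; the frame's structure constants, computed from $[x\partial_x,x\partial_{y_i}]=x\partial_{y_i}$ and $[x\partial_{y_i},x\partial_{y_j}]=0$, are moreover the same as those of the hyperbolic frame $\{t\partial_t,t\partial_{\xi_i}\}$. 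Consequently the $0$-Christoffel symbols (which depend on the $0$-metric, its first $0$-derivatives and the structure constants) and the $0$-curvature (which depends in addition on the second $0$-derivatives) agree at $p$ with their hyperbolic counterparts. Since $\mathcal L$ is geometric, Proposition \ref{lap:type:2} together with Definition \ref{geom:op} express the coefficients $a_{j,\beta}$ as fixed universal contractions — determined once and for all by the defining representation of the orthogonal group — of the $0$-metric, the $0$-connection it induces and the associated curvature; feeding in the boundary values just obtained yields $a_{j,\beta}(0,y_0)=a^{\mathbb{H}}_{j,\beta}$, the coefficients of $\mathcal L$ on $\mathbb{H}^{n+1}$. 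Because the hyperbolic $0$-metric has constant components in its $0$-frame, that operator is already in the frozen form, and comparison with (\ref{norm:rep:def}) identifies the two. (The scalar case of this computation is precisely Example \ref{lap:exp:loc}, read with $h_0=\delta$ constant in $x$, so that the $O(x)$ and $o(1)$ remainders drop out at $x=0$.)

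It remains to match the bundles. The $0$-orthonormal coframe at $p$ trivializes $\prescript{0}{}{T^*M}$ compatibly with the hyperbolic coframe, and since by Definition \ref{geom:op} the bundle $\overline E$ and its connection are associated to the orthonormal frame bundle through a representation $\rho$ of the orthogonal group, this identification promotes to an isomorphism of $\overline E_{p}$ with the fibre of the corresponding geometric bundle over $\mathbb{H}^{n+1}$ intertwining the Weitzenb\"ock potentials. Under it, $\mathcal N_{p}(\mathcal L)$ acting on $T^+_{p}\overline M\times\overline E_{p}$ becomes exactly $\mathcal L$ acting on the geometric bundle over $\mathbb{H}^{n+1}$, as claimed. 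The one point demanding care is the control of the derivative terms in the second paragraph: one must check that every metric derivative entering the connection and the Weitzenb\"ock curvature appears, after passage to the $0$-frame, as a genuine $0$-derivative of the $0$-metric, so that the factor-of-$x$ vanishing at the boundary applies; this is exactly where both the normal form (\ref{met:spec}) and the naturality built into the geometric hypothesis are indispensable.
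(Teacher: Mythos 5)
Your proof is correct and follows exactly the route the paper intends: the paper offers no argument beyond the assertion that the proposition ``follows readily from (\ref{norm:rep:def})'', and your computation --- showing that in the $0$-frame the Gram matrix, structure constants, $0$-Christoffel symbols and curvature of $g$ at a boundary point coincide with the (constant) hyperbolic ones, so that coefficient-freezing reproduces the corresponding geometric operator on $\mathbb{H}^{n+1}$ --- is precisely the detailed version of that assertion. Nothing essential is missing; the caveat you raise at the end about derivative terms is already settled by your own observation that nonempty $0$-derivatives of functions smooth up to the boundary vanish at $x=0$, applied to the (boundary-smooth) connection and curvature coefficients in the $0$-frame.
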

		
		In particular, for this class of generalized Laplacians, the indicial roots do {\em not} depend on $(x_0,y_0)$. In general, however, in order to simplify the analysis we must impose the {\em constancy} of the indicial roots, which we take for granted from now on. Another simplifying assumption takes place if we assume that $\mathcal L$ is {\em formally self-adjoint}.  
		In this case, there holds $\mathcal I(\mathcal L)(\zeta)=\mathcal I(\mathcal L)(n-\overline{\zeta})$, so the indicial roots are symmetric about the line ${\rm Re}\,\zeta=n/2$.
		Assume further that there exists no indicial root $\zeta$ with ${\rm Re}\,\zeta=n/2$ and let $I_{\mathcal L}$ be the largest open interval containing $n/2$ for which there is no indicial root $\zeta$ with ${\rm Re}\,\zeta\in I_{\mathcal L}$; the radius of $I_{\mathcal L}$ is called the {\em indicial radius} in \cite{lee2006fredholm}. In a sense, the non-indicial interval $I_{\mathcal L}$ determines a putative ``Fredholm range'' for $\mathcal L$, which is confirmed by the next fundamental theorem, a rather special case of a general result proved in \cite{mazzeo1987meromorphic,mazzeo1988hodge}.

		\begin{theorem}\label{norm:cond:t}[Fredholmness for generalized Laplacians in the $0$-calculus]
			Let $(M,g)$ be ALH and let $\mathcal L$ be a (not necessarily geometric but formally self-adjoint) generalized Laplacian operator acting on sections of a vector bundle $E$ over $M$ and satisfying the assumptions above. 
			Then 
			\begin{equation}\label{L:holder}
				\mathcal L:x^\delta C^{k,\alpha}_{0}(\overline E)\to x^\delta C^{k-2,\alpha}_{0}(\overline E)
			\end{equation}
			is Fredholm (of index zero) for some $\delta\in I_{\mathcal L}$ if and only if 
			\begin{equation}\label{norm:cond}
				\mathcal N_{(x_0,y_0)}(\mathcal L):x^\delta H^{k}_0(\overline E_{(x_0,y_0)})\to x^\delta H^{k-2}_0(\overline E_{(x_0,y_0)})
			\end{equation}
			is invertible (for any $(x_0,y_0)$). In this case, the kernel of (\ref{L:holder}) does not depend on $\delta\in I_{\mathcal L}$ and actually coincides with $\ker \mathcal L|_{L^2(\overline E)}$. 
		\end{theorem}

		It is instructive to  check how this applies to the scalar Laplacian $\prescript{g}{}{\Delta}$, which is the prototypical example of a geometric and formally self-adjoint generalized Laplacian.  		
		In this case, Fredholmness  in appropriate weighted H\"older spaces may be established as follows.
		First, from (\ref{lap:ind}) we may take $I_{\prescript{g}{}{\Delta}}=(0,n)$. 
			By Proposition \ref{geom:lap:norm}, for any $(x_0,y_0)\in X$,
			\begin{equation}\label{norm:lap}
			\mathcal N_{(x_0,y_0)}(\prescript{g}{}{\Delta})=\prescript{g_{\mathbb H}}{}{\Delta}:x^\delta H^{k}_0(\overline{\mathbb H}^{n+1})\to x^\delta H^{k-2}_0(\overline{\mathbb H}^{n+1}),\quad \delta\in (0,n).
			\end{equation}
			Since for {\em any} ALH space $(M,g)$ there holds
			\[
			x^{\frac{n}{2}}H^0_{0}=L^2(M;d{\rm vol}_g),
			\]
			self-adjointness of $\prescript{g_{\mathbb H}}{}{\Delta}$ and a bit of $0$-regularity theory imply that the invertibility of (\ref{norm:lap})
			gets reduced to checking that 
		 $\ker \prescript{g_{\mathbb H}}{}{\Delta}\big|_{L^2}=\{0\}$, which follows from the well-known McKean-type  estimate 
			\begin{equation}\label{mckean:funct}
				\int_{\mathbb H^{n+1}}\left|\prescript{g_{\mathbb H}}{}{\nabla}u\right|^2 d{\rm vol}_{g_{\mathbb H}}\geq\frac{n^2}{4}\int_{\mathbb H^{n+1}}u^2d{\rm vol}_{g_{\mathbb H}}, \quad u\in C^{\infty}_{\rm cpt}(\mathbb H^{n+1}).
			\end{equation}  
	 	Hence, 
	 	\begin{equation}\label{lap:0:n}
	 	\prescript{g}{}{\Delta}:x^\delta C^{k,\alpha}_{0}(\underline{\mathbb R})\to x^\delta C^{k-2,\alpha}_{0}(\underline{\mathbb R}), \quad \delta\in(0,n),
	 	\end{equation}
	is Fredholm, which is a first step toward proving finer mapping properties like invertibility. In the case at hand, we may proceed by appealing  to the last assertion in Theorem \ref{norm:cond:t} to conclude that the obstruction to (\ref{lap:0:n}) being invertible lies in the non-triviality of $\ker \prescript{g}{}{\Delta}\big|_{L^2}$. 
		We now recall an useful result in this setting. 
		
		\begin{theorem}\label{mazz:top} \cite{mazzeo1988hodge}
			If $(M,g)$ is ALH and $k<n/2$ then $\ker \prescript{g}{}{\Delta}^k\big|_{L^2}=H^k(\overline M,Y)$, where $\prescript{g}{}{\Delta}^k$ is the Hodge Laplacian acting on $k$-forms. 
		\end{theorem}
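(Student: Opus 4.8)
The plan is to reduce the statement to the computation of the space $\mathcal H^k_{L^2}(M)$ of $L^2$-harmonic $k$-forms and then to identify the latter with relative cohomology by means of the boundary asymptotics furnished by the $0$-calculus.

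First I would observe that, since $(M,g)$ is complete, a Gaffney-type argument (as in \cite{gaffney1951harmonic}, already invoked above for $k=0$) shows that any $\omega \in L^2\Omega^k$ with $\prescript{g}{}{\Delta}^k\omega = 0$ is automatically closed and coclosed, i.e. $d\omega = 0 = d^*\omega$. Hence $\ker\prescript{g}{}{\Delta}^k\big|_{L^2} = \mathcal H^k_{L^2}(M)$, and it remains to produce a natural isomorphism $\mathcal H^k_{L^2}(M)\cong H^k(\overline M, Y)$.

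Next I would extract the boundary asymptotics of such harmonic forms. By Proposition \ref{geom:lap:norm} the normal operator of the (geometric) Hodge Laplacian is the Hodge Laplacian of $\mathbb H^{n+1}$, so the indicial roots are those of the hyperbolic model and are independent of the boundary point. Decomposing a $0$-$k$-form near $Y$ into its tangential part and its normal part $\frac{dx}{x}\wedge(\cdot)$ and solving the indicial equation, one finds that the tangential indicial roots are $k$ and $n-k$ (the normal part contributing the harmless pair $k-1,\, n-k+1$). The hypothesis $k<n/2$ is precisely the statement that the $L^2$-weight $\delta = n/2$ --- recall that $x^{n/2}H^0_0 = L^2(M;d\mathrm{vol}_g)$ --- lies strictly inside the non-indicial interval $(k,n-k)$. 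By the $0$-regularity theory underlying Theorem \ref{norm:cond:t}, an $L^2$-harmonic form is therefore polyhomogeneous and decays at the rate of the first indicial root above $n/2$, namely $x^{n-k}$ (in the natural $0$-trivialization); in particular its pullback $i^*\omega$ to $Y$ vanishes, so $\omega$ represents a well-defined class $[\omega]\in H^k(\overline M,Y)$.

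I would then show that $\Phi:\mathcal H^k_{L^2}(M)\to H^k(\overline M,Y)$, $\omega\mapsto[\omega]$, is an isomorphism. For injectivity, if $\omega = d\eta$ with $\eta$ vanishing on $Y$ then the decay just established legitimizes the integration by parts $\|\omega\|^2_{L^2} = \langle d\eta,\omega\rangle = \langle\eta,d^*\omega\rangle = 0$, whence $\omega = 0$. For surjectivity I would use that $\delta = n/2\in(k,n-k)$ places $\prescript{g}{}{\Delta}^k$ in its Fredholm range (Theorem \ref{norm:cond:t}) together with the $L^2$ Kodaira--Hodge decomposition for complete manifolds, so that the reduced $L^2$-cohomology is represented by $\mathcal H^k_{L^2}(M)$; a standard comparison between reduced $L^2$-cohomology and the relative de Rham complex (again using the $x^{n-k}$ decay to control boundary terms) identifies the image of $\Phi$ with all of $H^k(\overline M,Y)$. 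The main obstacle is the boundary analysis of the third paragraph: computing the indicial roots of $\prescript{g}{}{\Delta}^k$ and pinning down the decay rate of $L^2$-harmonic forms. This is exactly where the strict inequality $k<n/2$ is essential --- it keeps the $L^2$-weight $n/2$ strictly between the tangential indicial roots $k$ and $n-k$, so that the Fredholm theory applies and harmonic forms decay fast enough to represent relative (rather than absolute) classes. At the borderline $k=n/2$ (possible when $n$ is even) the two roots collide on the line $\mathrm{Re}\,\zeta = n/2$, the Fredholm range degenerates, and the $L^2$-cohomology is no longer finite-dimensional, which is why the hypothesis cannot be relaxed.
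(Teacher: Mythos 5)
The paper offers no proof of this statement at all --- it is imported wholesale from \cite{mazzeo1988hodge} --- so your proposal can only be judged against Mazzeo's argument, whose overall skeleton (Gaffney's theorem, indicial computation, boundary decay, identification with relative classes) you reproduce correctly. There is, however, a genuine gap at the decisive analytic step. You pass from ``$\delta=n/2$ lies strictly inside the non-indicial interval $(k,n-k)$'' to ``$\prescript{g}{}{\Delta}^k$ is Fredholm and its $L^2$-kernel elements are polyhomogeneous with leading decay $x^{n-k}$''. By the paper's own Theorem \ref{norm:cond:t} and Remark \ref{non-deg:req}, non-indiciality of the weight is only a \emph{necessary} condition: Fredholmness (and the parametrix construction underlying the regularity and decay you invoke) holds if and only if the normal operator --- which by Proposition \ref{geom:lap:norm} is the Hodge Laplacian $\prescript{g_{\mathbb H}}{}{\Delta}^k$ of $\mathbb H^{n+1}$ --- is invertible on the corresponding weighted spaces. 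You never verify this, and the omission is not cosmetic: for $n$ odd and $k=(n+1)/2$ the weight $n/2$ is still strictly non-indicial (all indicial roots equal $\frac{n\pm 1}{2}$), yet $\ker\prescript{g_{\mathbb H}}{}{\Delta}^k\big|_{L^2}$ is infinite-dimensional (Dodziuk) and the operator is not Fredholm; your argument as written would therefore ``prove'' the identification in a degree where it fails. The missing ingredient is the model computation that plays for $k$-forms the role the McKean estimate (\ref{mckean:funct}) plays for the scalar Laplacian in Section \ref{fred:0-calc}: for $k<n/2$ the $L^2$-spectrum of $\prescript{g_{\mathbb H}}{}{\Delta}^k$ is $\left[\tfrac{(n-2k)^2}{4},\infty\right)$ (Donnelly), so the model operator has trivial kernel and is invertible. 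Note that this cannot be obtained by applying the theorem being proved to $(\mathbb H^{n+1},g_{\mathbb H})$ itself; that would be circular.

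Two lesser points. Your surjectivity step (``a standard comparison between reduced $L^2$-cohomology and the relative de Rham complex'') compresses into one sentence what is in fact the bulk of the work in \cite{mazzeo1988hodge}: one must produce an $L^2$-harmonic representative of each relative class, for instance by extending a smooth relative representative into the interior and solving away the resulting error term with the (previously established) Fredholm theory, keeping track of weights so that the correction neither leaves $L^2$ nor changes the boundary class. Finally, your closing sharpness remark is inaccurate for the object actually appearing in the statement: at $k=n/2$ ($n$ even) the kernel $\ker\prescript{g}{}{\Delta}^k\big|_{L^2}$ remains finite-dimensional (for $\mathbb H^{n+1}$ it vanishes); what degenerates there is Fredholmness and the unreduced $L^2$-cohomology, while an infinite-dimensional space of $L^2$-harmonic forms occurs only in the middle degree $(n+1)/2$, $n$ odd.
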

		
		Thus, $\ker \prescript{g}{}{\Delta}\big|_{L^2}=H^0(\overline M,Y)=\{0\}$
		and we conclude that (\ref{lap:0:n}) is invertible.
		Essentially the same argument as above leads to the following useful result, which we record here for later reference. 		
		
		\begin{theorem}\label{th:usef:geo}	
			Let $\mathcal L$ be a geometric and formally self-adjoint generalized Laplacian on a ALH manifold and as always assume  that its indicial family yields a non-indicial interval $I_{\mathcal L}\neq\emptyset$; in the language of \cite{lee2006fredholm}, this means that the indicial radius of $\mathcal L$ is positive. Then 
			(\ref{L:holder}) is invertible for $\delta\in I_{\mathcal L}$ whenever the following conditions hold:
			\begin{enumerate}
				\item its normal operator, which does not depend on $(x_0,y_0)$ and may be identified to the corresponding geometric operator acting on $\mathbb H^{n+1}$, has a trivial kernel in $L^2$;
				\item ${\ker}\,\mathcal L\big|_{L^2}=\{0\}$. 
			\end{enumerate} 
			Moreover, the same conclusion holds for a (not necessarily geometric but formally self-adjoint) generalized Laplacian whenever its normal operators may be identified to the normal operator of a geometric generalized Laplacian for which $\rm (1)$ holds. 
		\end{theorem}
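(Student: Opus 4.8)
The plan is to reproduce, in the present generality, the chain of reasoning carried out above for the scalar Laplacian $\prescript{g}{}{\Delta}$, keeping careful track of where the geometric hypothesis is actually used. The argument proceeds in two stages: first one shows that the normal operator (\ref{norm:cond}) is \emph{invertible} for every $\delta\in I_{\mathcal L}$, and then one feeds this into the Fredholm criterion of Theorem \ref{norm:cond:t} and removes the residual kernel by means of hypothesis (2).

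For the first stage, since $\mathcal L$ is geometric, Proposition \ref{geom:lap:norm} identifies $\mathcal N_{(x_0,y_0)}(\mathcal L)$ with the corresponding geometric operator on $\mathbb H^{n+1}$, independently of $(x_0,y_0)$. By the first bullet of Theorem \ref{norm:cond:t}, this normal operator is Fredholm on $x^\delta H^k_0$ for every $\delta\in I_{\mathcal L}$, with index independent of $\delta$. To pin down the index I would specialize to the central weight $\delta=n/2$, which belongs to $I_{\mathcal L}$: formal self-adjointness forces the indicial roots to be symmetric about $\{\mathrm{Re}\,\zeta=n/2\}$ and, by assumption, none of them lies on that line. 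At $\delta=n/2$ the relevant space is $x^{n/2}H^0_0=L^2(\mathbb H^{n+1};d\mathrm{vol}_{g_{\mathbb H}})$, on which the formally self-adjoint operator is essentially self-adjoint; hence its Fredholm index vanishes, and by the stated $\delta$-independence the index is zero throughout $I_{\mathcal L}$. It therefore suffices to show that the kernel is trivial. A standard application of $0$-regularity (exactly as in the penultimate step for $\prescript{g}{}{\Delta}$) shows that any element of $\ker\mathcal N(\mathcal L)$ lying in $x^\delta H^k_0$ with $\delta\in I_{\mathcal L}$ automatically improves to $x^{n/2}H^0_0=L^2$; by hypothesis (1) this kernel is trivial. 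Since the index is zero, triviality of the kernel yields triviality of the cokernel as well, so the normal operator is invertible for every $\delta\in I_{\mathcal L}$.

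For the second stage, the invertibility of (\ref{norm:cond}) combined with Theorem \ref{norm:cond:t} shows that
\[
\mathcal L:x^\delta C^{k,\alpha}_{0}(\overline E)\to x^\delta C^{k-2,\alpha}_{0}(\overline E)
\]
is Fredholm of index zero for every $\delta\in I_{\mathcal L}$. To upgrade this to invertibility it remains to check that the kernel is trivial. As before, if $u\in x^\delta C^{k,\alpha}_{0}(\overline E)$ satisfies $\mathcal Lu=0$, the weighted H\"older--Sobolev embeddings of Remark \ref{emb:theo} place $u$ in some $x^{\delta'}H^k_0$ with $\delta'\in I_{\mathcal L}$, and $0$-regularity again improves $u$ to $L^2$; hypothesis (2) then forces $u=0$. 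A Fredholm operator of index zero with trivial kernel is invertible, which proves the assertion for geometric $\mathcal L$.

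Finally, the geometric hypothesis enters \emph{only} through Proposition \ref{geom:lap:norm}, i.e. solely to identify the normal operators with those of a geometric operator on $\mathbb H^{n+1}$ and thereby run the first stage. Hence, if $\mathcal L$ is merely formally self-adjoint but its normal operators coincide with those of a geometric generalized Laplacian satisfying (1), the first stage applies verbatim to $\mathcal N(\mathcal L)$, while the second stage uses only hypothesis (2) for $\mathcal L$ itself; this gives the concluding ``moreover'' statement. The main obstacle is the $0$-regularity bootstrap that promotes kernel elements at an arbitrary admissible weight to genuine $L^2$ solutions: this is precisely the step that relies on the constancy of the indicial roots and their absence from the strip $\mathrm{Re}\,\zeta\in I_{\mathcal L}$, and it is what links the weighted-space kernels to the $L^2$ kernels appearing in hypotheses (1) and (2).
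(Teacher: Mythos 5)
Your proposal is correct and follows essentially the same route as the paper, which establishes Theorem \ref{th:usef:geo} by declaring that ``essentially the same argument'' as the bulleted chain for the scalar Laplacian applies: identification of the normal operator via Proposition \ref{geom:lap:norm}, index zero from essential self-adjointness at the central weight $\delta=n/2$, invertibility of the normal operator from hypothesis (1) after $0$-regularity, Fredholmness of index zero from Theorem \ref{norm:cond:t}, and finally triviality of the kernel from hypothesis (2) plus the $L^2$ bootstrap. Your explicit tracking of where the geometric hypothesis enters (only through the normal operator identification) matches exactly how the paper justifies the concluding ``moreover'' clause.
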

		
		\begin{remark}\label{non-deg:req}
			From the discussion above we see that 
			the steps required to check that a formally self-adjoint generalized Laplacian $\mathcal L$ is invertible may be summarized as follows. First, one  verifies that
			its normal operator $\mathcal N(\mathcal L)$ is well-behaved, which means that $I_{\mathcal L}\neq \emptyset$ and item (1) above holds. As explained in detail in \cite{mazzeo1987meromorphic,mazzeo1988hodge,albinnotes2008,hintz2021elliptic}, these conditions entail the complementary ellipticity around the conformal boundary we mentioned earlier and allow for the construction, via the  $0$-calculus, of a parametrix for $\mathcal L$ in this asymptotic region. Together with interior ellipticity, this turns out to be equivalent to $\mathcal L$ being Fredholm in the non-indicial range $\delta\in I_{\mathcal L}$. Granted this, invertibility is achieved if $\mathcal L$ is {\em non-degenerate} in the sense that item (2) in the previous theorem holds.  
		\end{remark}
		
		\section{Einstein-Yang-Mills fields}\label{eym:disc}
		
		The purpose of this section is to review the  theory of Einstein-Yang-Mills fields  with an emphasis on its variational features \cite{kerner1968generalization,cho1975higher,hermann1978yang,bourguignon1989mathematician,bleecker2005gauge}. This is a non-abelian generalization of the classical Kaluza-Klein unified field theory, the first serious attempt to put together the gravitational and electromagnetic fields in a single mathematical framework, and involves the consideration of a pair $(g,\omega)$, where $g$ is a metric on a smooth manifold and $\omega$ is a connection on a ``external'' principal bundle. Sometimes we refer to $(g,\omega)$ simply as a {\em configuration}.  
		
		We first consider the metric side of the theory.
		If $M^{n+1}$ is a smooth manifold, we denote by $\mathcal M_M$ the space of all (not necessarily ALH) smooth metrics on  $M$. 
		As usual, we denote by $\mathcal S^k(M)=C^\infty(S^kT^*M)$ the space of symmetric $k$-covariant tensors on $M$. Recall that we may view $\mathcal S^2(M)$ as the tangent space to $\mathcal M_M$ at some metric $g$,  $T_g\mathcal M_M= \mathcal S^2(M)$.
		
		In the presence of a metric $g\in\mathcal M_M$, there exists a linear map $\prescript{g}{}{\delta}:\mathcal S^2(M)\to \mathcal S^{1}(M)$, the divergence operator, given in index notation by
		\[
		\prescript{g}{}{\delta}\eta_{j}=-\prescript{g}{}{\nabla}^i\eta_{ij},
		\]
		where $\prescript{g}{}{\nabla}$ is the covariant derivative induced by the Levi-Civita connection of $g$ (and of course we raise indexes with respect to $g$). 
		Note that $\prescript{g}{}{\delta}g=0$. 
		The following well-known result is an immediate consequence of the definition.
		
		\begin{lemma}\label{div:adj}
			The divergence $\prescript{g}{}{\delta}$ is the formal adjoint of the map $\prescript{g}{}{\delta}^*:\mathcal A^1(M)\to \mathcal S^2(M)$ given by the symmetrization of $\prescript{g}{}{\nabla}$. In other words, $\prescript{g}{}{\delta}^*\eta=\frac{1}{2}\mathbb L_{\eta^\sharp}g$, where $\mathbb L_{\eta^\sharp}g$ is the Lie derivative of $g$ with respect to the vector field $\eta^\sharp$ dual to $\eta$. 
		\end{lemma}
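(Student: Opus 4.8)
The plan is to establish the two assertions separately: first the explicit formula $\prescript{g}{}{\delta}^*\eta=\frac12\mathbb L_{\eta^\sharp}g$, and then the formal adjointness, which will follow from a single integration by parts.

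For the formula I would start from Cartan's expression for the Lie derivative of the metric and rewrite it in terms of the Levi-Civita connection. Because $\prescript{g}{}{\nabla}$ is torsion-free and metric-compatible (so that $\prescript{g}{}{\nabla}g=0$), the transport term drops out and one obtains, in index notation, $(\mathbb L_{\eta^\sharp}g)_{ij}=\prescript{g}{}{\nabla}_i\eta_j+\prescript{g}{}{\nabla}_j\eta_i$, where I have used that the components of $\eta^\sharp$ lowered by $g$ are again $\eta_j$. The right-hand side is precisely twice the symmetrization of $\prescript{g}{}{\nabla}\eta$, which by definition of $\prescript{g}{}{\delta}^*$ equals $2\,\prescript{g}{}{\delta}^*\eta$, yielding the claimed identity.

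For the adjointness I would pair $\prescript{g}{}{\delta}\zeta$ with $\eta$ in the $L^2$ inner product induced by $g$, taking $\zeta\in\mathcal S^2(M)$ and $\eta\in\mathcal A^1(M)$ with at least one of compact support (this being what ``formal'' adjoint refers to). The key device is the vector field $W^i:=\zeta^{ij}\eta_j$, whose divergence $\prescript{g}{}{\nabla}_iW^i$ expands, via the Leibniz rule and metric compatibility, into one term matching $-\langle\prescript{g}{}{\delta}\zeta,\eta\rangle$ (the minus sign tracing back to the convention $\prescript{g}{}{\delta}\eta_j=-\prescript{g}{}{\nabla}^i\eta_{ij}$) and one term matching $\langle\zeta,\prescript{g}{}{\delta}^*\eta\rangle$; here the symmetry of $\zeta$ is used to replace $\prescript{g}{}{\nabla}_i\eta_j$ by its symmetrization when contracted against $\zeta^{ij}$. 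Integrating $\prescript{g}{}{\nabla}_iW^i$ over $M$ and invoking the divergence theorem, whose boundary contribution vanishes by the compact support assumption, then gives $\int_M\langle\prescript{g}{}{\delta}\zeta,\eta\rangle\,d{\rm vol}_g=\int_M\langle\zeta,\prescript{g}{}{\delta}^*\eta\rangle\,d{\rm vol}_g$, as required.

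I do not anticipate a genuine obstacle, since the statement is an identity of first-order operators whose proof reduces to bookkeeping. The only points demanding care are the sign conventions — the minus sign in the definition of $\prescript{g}{}{\delta}$ must be tracked so that the adjoint emerges with the correct overall sign — and the justification that no boundary term survives, which holds because formal adjointness is tested on compactly supported sections; this last remark is worth emphasizing precisely because the ambient ALH manifolds are noncompact.
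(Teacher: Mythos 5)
Your proof is correct. The paper offers no argument for this lemma at all---it is stated as ``an immediate consequence of the definition''---and your two steps (the standard identity $(\mathbb L_{\eta^\sharp}g)_{ij}=\prescript{g}{}{\nabla}_i\eta_j+\prescript{g}{}{\nabla}_j\eta_i$ for the torsion-free, metric-compatible connection, followed by a single integration by parts via the divergence of $W^i=\zeta^{ij}\eta_j$, with the minus sign in the convention $\prescript{g}{}{\delta}\zeta_j=-\prescript{g}{}{\nabla}^i\zeta_{ij}$ correctly tracked) are exactly the routine verification the paper leaves to the reader.
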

		
		It is known that  
		\begin{equation}\label{bian:id}
			\prescript{g}{}{\delta}\prescript{g}{}{\rm G}=0,
		\end{equation}
		where 
		\[
		\prescript{g}{}{\rm G}=\prescript{g}{}{\rm Ric}-\frac{\prescript{g}{}{\rm R}}{2}g
		\]
		is the {\em Einstein tensor} of $g$. Here, ${\rm Ric}$ and ${\rm R}$ stand for the Ricci tensor and the scalar curvature, respectively. As a consequence, 
		the {\em Bianchi map} 
		\[
		\prescript{g}{}{\mathcal B}=\prescript{g}{}{\delta}+\frac{1}{2}d\prescript{g}{}{\rm tr}:\mathcal S^2(M)\to \mathcal A^{1}(M)
		\] 
		satisfies
		\begin{equation}\label{bian:id:2}
			\prescript{g}{}{\mathcal B}\left(\prescript{g}{}{\rm Ric}+ng\right)=0.
		\end{equation}

		We now turn to the Yang-Mills side of the theory.
		Let $P$ be a principal bundle over $M$ with structural group $L$, a compact Lie group whose Lie algebra $\mathfrak l$ is endowed with a fixed ${\rm Ad}$-invariant inner product $q$. One has a natural Lie algebra homomorphism $\mathfrak l\to \mathcal X(P)$, $X\mapsto X^*$, and composing this  
		with evaluation at $p\in P$ yields an identification
		\[
		\mathfrak l \equiv V_p=\ker \pi_*(p)\subset T_pP,
		\]
		the vertical subspace at $p$. 
		Here, $\pi:P\to M$ is the natural projection. 
		A {\em connection}  on $P$ is just an
		equivariant way of algebraically complementing the vertical
		distribution $V$, so that $T_pP=V_p\oplus H_p$ for some horizontal
		distribution $H$. Equivalently we can think of  a connection  as a
		$\mathfrak l$-valued $1$-form $\omega$ on $P$ satisfying $\ker
		\omega_p=H_p$, $\omega_p(X^*_p)=X$ and $R_l^* \omega=
		{\rm Ad}_{l^{-1}}\omega$, $l\in L$, where $R$ stands for the right $L$-action.
		
		Given $k\geq 0$ let
		$\mathcal A^k(P,{\mathfrak l})$ be the space of $\mathfrak l$-valued $k$-forms on $P$,
		that is,
		$\mathcal A^k(P,{\mathfrak l})=C^\infty(\Lambda^kT^*P\otimes {\underline {\mathfrak l}})$,
		where $\underline {\mathfrak l}=P\times \mathfrak l$ is the trivial Lie algebra bundle. 
		Notice that 
		connections belong to $\Ac^1(P,{\mathfrak l})$. We also consider the subspace
		$\mathcal A_{{\rm
				Ad}}^\bullet(P,\mathfrak l)\subset \mathcal A^k(P,{\mathfrak l})$ associated to the
		adjoint representation ${{\rm Ad}}:L\times\mathfrak l\to\mathfrak l$, whose  elements $\varphi$ are characterized by
		\begin{itemize}
			\item $\varphi$ is equivariant with respect to ${\rm Ad}$: $R^*_l\varphi={\rm Ad}_{l^{-1}}\varphi$, $l\in L$;
			\item $\varphi$ vanishes if some of its entries is vertical.
		\end{itemize}
		Thus, 
		$\mathcal A^k_{\rm Ad}(P,\mathfrak l)$ gets identified to 
		$C^\infty(\Lambda^kT^*M\otimes
		{\rm Ad}\,\mathfrak l)$, 
		the space of $k$-forms over $M$ with values in the associated adjoint bundle
		${\rm Ad}\,\mathfrak l=P\times_{\rm Ad}\mathfrak l$.
		
		The relevance of $\mathcal A^\bullet_{{\rm Ad}}(P,\mathfrak l)$ is primarily  due to the following facts. 
		First, the difference of any two connections lies in $\mathcal A^1_{{\rm Ad}}(P,\mathfrak l)$, so at least formally we may write $T_\omega \mathcal C_P=\mathcal A^1_{{\rm Ad}}(P,\mathfrak l)$, where $\omega\in \mathcal C_P$, the space of all connections on $P$.
		Also, if we  fix $\omega\in \mathcal C_P$
		then for 
		$\varphi\in\mathcal A^k(P,{\mathfrak l})$ we may define
		$\varphi^H\in\mathcal A^k(P,{\mathfrak l})$
		by
		$\varphi^H(X_1,\ldots,X_k)=\varphi(X_1^H,\ldots,X_k^H)$, where
		$X_i^H$ is the horizontal projection of $X_i$ with respect to  $\omega$, and
		then set $d_{\omega}\varphi:=(d\varphi)^H$. Upon restriction we obtain 
		\begin{equation}\label{d:om}
			d_{\omega}:\mathcal A^k_{{\rm Ad}}(P,\mathfrak l)\to \mathcal A^{k+1}_{{\rm Ad}}(P,\mathfrak l),
		\end{equation}
		the {\em covariant derivative} associated to $\omega$. 
		Also, if we fix a metric $g$ on $M$ then with the help of $q$  we may define a  $L^2$ inner product on $\mathcal A^\bullet_{{\rm Ad}}(P,\mathfrak l)$.
		
		\begin{lemma}\label{adj:d} 
			The corresponding formal adjoint of $d_\omega$ in (\ref{d:om}) is given by
			\[
			\prescript{g}{}{d}^*_\omega=-(-1)^{(n+1)k}\star d_{\omega}\star,
			\]  
			where $\star$ is the Hodge star operator. 
		\end{lemma}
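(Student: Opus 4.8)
The plan is to mimic the classical derivation of the codifferential as $\pm\star d\star$, the only genuinely new ingredient being that the ${\rm Ad}$-invariance of $q$ makes $d_\omega$ behave, against the $q$-pairing, exactly as the ordinary exterior derivative $d$ does against the wedge product. For $\varphi\in\mathcal A^p_{\rm Ad}(P,\mathfrak l)$ and $\eta\in\mathcal A^{p'}_{\rm Ad}(P,\mathfrak l)$, let $q(\varphi\wedge\eta)$ denote the scalar-valued $(p+p')$-form obtained by wedging the form parts and applying $q$ to the $\mathfrak l$-parts. The $L^2$ inner product fixed just before the lemma is then $\langle\varphi,\psi\rangle_{L^2}=\int_M q(\varphi\wedge\star\psi)$ for $\varphi,\psi$ of equal degree, since in a $q$-orthonormal frame of $\mathfrak l$ one checks that $q(\varphi\wedge\star\psi)$ equals the pointwise inner product times $d{\rm vol}_g$.

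First I would establish the graded Leibniz rule
\[
d\,q(\varphi\wedge\eta)=q(d_\omega\varphi\wedge\eta)+(-1)^{p}\,q(\varphi\wedge d_\omega\eta).
\]
This is the conceptual core of the argument and follows from the fact that the fibre metric $q$ on ${\rm Ad}\,\mathfrak l$ is parallel for the connection induced by $\omega$ — precisely the infinitesimal form of the ${\rm Ad}$-invariance of $q$ — together with the ordinary Leibniz rule for $d$ and the definition $d_\omega\varphi=(d\varphi)^H$.

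Next I would take $\varphi\in\mathcal A^k_{\rm Ad}(P,\mathfrak l)$ and $\psi\in\mathcal A^{k+1}_{\rm Ad}(P,\mathfrak l)$ and apply the Leibniz rule with $\eta=\star\psi$, an $(n-k)$-form. Integrating over $M$ and discarding the exact term by Stokes' theorem (working with compactly supported sections, as is appropriate for a formal adjoint) gives
\[
\int_M q(d_\omega\varphi\wedge\star\psi)=-(-1)^k\int_M q(\varphi\wedge d_\omega\star\psi).
\]
The left-hand side is $\langle d_\omega\varphi,\psi\rangle_{L^2}$. To recognize the right-hand side as an $L^2$ pairing of $k$-forms I would insert the identity $\star\star=(-1)^{j(n+1-j)}$ on $j$-forms: applied to the $(n-k+1)$-form $d_\omega\star\psi$ it yields $d_\omega\star\psi=(-1)^{(n-k+1)k}\star\!\left(\star d_\omega\star\psi\right)$, where $\star d_\omega\star\psi$ is a $k$-form. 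Hence
\[
\langle d_\omega\varphi,\psi\rangle_{L^2}=-(-1)^{k}(-1)^{(n-k+1)k}\,\langle\varphi,\star d_\omega\star\psi\rangle_{L^2},
\]
so that $\prescript{g}{}{d}^*_\omega\psi=-(-1)^{k}(-1)^{(n-k+1)k}\star d_\omega\star\psi$.

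It then remains to simplify the sign. Using $k^2\equiv k\pmod 2$ one computes $k+(n-k+1)k=kn-k^2+2k\equiv k(n+1)\pmod 2$, which collapses the prefactor to $-(-1)^{(n+1)k}$ and yields the claimed formula. I expect the only real subtlety to be this sign bookkeeping together with the careful justification of the Leibniz rule from the parallelism of $q$; everything else is the standard Hodge-theoretic manipulation transported verbatim to the ${\rm Ad}$-twisted setting.
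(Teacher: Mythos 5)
Your proof is correct: the paper states Lemma \ref{adj:d} without proof, and your argument is exactly the standard derivation it implicitly invokes --- the graded Leibniz rule for $d_\omega$ against the $q$-pairing (valid because ${\rm Ad}$-invariance of $q$ makes it parallel for the induced connection), Stokes' theorem on compactly supported sections, and the identity $\star\star=(-1)^{j(n+1-j)}$ on $j$-forms of the $(n+1)$-manifold $M$. Your sign bookkeeping also checks out: $k+(n-k+1)k\equiv k(n+1)\pmod 2$, yielding precisely $-(-1)^{(n+1)k}\star d_\omega\star$ as the adjoint of $d_\omega$ acting on $\mathcal A^k_{\rm Ad}(P,\mathfrak l)$, in agreement with the convention (source degree $k$) used in the lemma.
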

		
		This allows us to define a {\em twisted} Hodge Laplacian
		\begin{equation}\label{hodge:conn}
			\prescript{g}{}{\Delta}_\omega^\bullet:=d_\omega \prescript{g}{}{d}^*_\omega+\prescript{g}{}{d}^*_\omega d_\omega:\mathcal A^\bullet_{{\rm Ad}}(P,\mathfrak l)\to \mathcal A^\bullet_{{\rm Ad}}(P,\mathfrak l) 
		\end{equation}
		which preserves the grading. Notice that $\prescript{g}{}{\Delta}_\omega^\bullet$ is a generalized Laplacian, albeit not a geometric one in the sense of Definition \ref{geom:op}, due to its dependence on $\omega$. 
		
		The last ingredient needed to set up the Einstein-Yang-Mills variational theory is the {\em curvature} associated to a connection:		
		$$
		\Omega_{\omega}=d_{\omega}\omega\in \mathcal A^2_{{\rm Ad}}(P,\mathfrak l).
		$$
		This allows us to define a {\em self-action density} $Q=Q(g,\omega)$
		as follows. First, we define a $4$-covariant tensor on $P$ by setting 
		\[
		q(\Omega_\omega)(X,Y,Z,W)=q(\Omega_\omega(X,Y),\Omega_\omega(Z,W)), \quad X,Y,Z,W\in \mathcal X(M),
		\]
		and then extending this to $P$ in the expected manner. If in  terms of a horizontal frame $\{e_i\}$ we set $q(\Omega_\omega)_{ijkm}=q(\Omega_\omega)(e_i,e_j,e_k,e_m)$ then  
		\[
		Q(g,\omega):=-\frac{1}{4}g^{ik}g^{jm} q(\Omega_\omega)_{ijkm}. 
		\] 
		In the language of \cite[Section 10.2]{bleecker2005gauge}, $Q$ is the gauge invariant density induced via Utiyama's theorem by the fixed ${\rm Ad}$-invariant inner product $q$ on $\mathfrak l$, viewed as a (quadratic) curvature Lagrangian. 
		
		We may now define the {\em Einstein-Yang-Mills functional}, $\mathcal Y:\mathcal M_M\times \mathcal C_P\to\mathbb R$, by
		\begin{equation}\label{eym:action}
			\mathcal Y(g,\omega)=\int_M\left(\prescript{g}{}{\rm R}+n(n-1)+Q(g,\omega)\right)d{\rm vol}_g.
		\end{equation}
		
		\begin{definition}\label{eym:fields}
			A configuration $(g,\omega)\in \mathcal M_M\times \mathcal C_P$ is an {\em Einstein-Yang-Mills (EYM) field} if it is critical for $\mathcal Y$.
		\end{definition}
		
		\begin{proposition}\cite[Theorem 9.3.3]{bleecker2005gauge}\label{blee:eym}
			$(g,\omega)\in \mathcal M_M\times \mathcal C_P$ is  EYM if and only if it satisfies
			\begin{equation}\label{blee:eym:eq}
				\left\{
				\begin{array}{rcl}
					\prescript{g}{}{\rm G} - \frac{n(n-1)}{2}g & = & K_{g,\omega}\\
					&&\\
					\prescript{g}{}{d}^*_\omega\Omega_\omega & = & 0
				\end{array}
				\right.
			\end{equation}
			where
			\begin{equation}\label{exp:K}
				{K_{g,\omega}}_{ij}=\frac{1}{2}g^{km}q(\Omega_\omega)_{ikjm}+
				\frac{1}{2}Q(g,\omega)g_{ij}
			\end{equation}
			is the {\em stress-energy tensor}.
		\end{proposition}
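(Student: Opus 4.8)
The plan is to compute the first variation of $\mathcal Y$ at a configuration $(g,\omega)$. Since the domain $\mathcal M_M\times\mathcal C_P$ is a product, criticality is equivalent to the simultaneous vanishing of the partial variations in the metric direction $h\in\mathcal S^2(M)=T_g\mathcal M_M$ and in the connection direction $\varphi\in\mathcal A^1_{\rm Ad}(P,\mathfrak l)=T_\omega\mathcal C_P$. To avoid any convergence or boundary issues on the (possibly incomplete or conformally compact) manifold $M$, I would restrict throughout to compactly supported variations, which suffices to characterize critical points pointwise and legitimizes all integrations by parts.

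First I would handle the connection direction, which is the simpler one. Only the self-action term $Q(g,\omega)$ depends on $\omega$, and since the curvature obeys the standard transformation law $\Omega_{\omega+t\varphi}=\Omega_\omega+t\,d_\omega\varphi+O(t^2)$, the first variation of $\int_M Q\,d{\rm vol}_g=-\tfrac14\int_M|\Omega_\omega|^2_{g,q}\,d{\rm vol}_g$ produces the pairing $-\tfrac12\int_M\langle\Omega_\omega,d_\omega\varphi\rangle\,d{\rm vol}_g$. Integrating by parts via the adjoint $\prescript{g}{}{d}^*_\omega$ of Lemma \ref{adj:d} rewrites this as a multiple of $\int_M\langle\prescript{g}{}{d}^*_\omega\Omega_\omega,\varphi\rangle\,d{\rm vol}_g$; requiring this to vanish for all $\varphi$ yields the Yang-Mills equation $\prescript{g}{}{d}^*_\omega\Omega_\omega=0$, the second equation in the system.

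Next I would treat the metric direction, where all three terms of $\mathcal Y$ contribute. For the Einstein--Hilbert part, the classical first-variation identities $\delta(\prescript{g}{}{\rm R})=-\langle\prescript{g}{}{\rm Ric},h\rangle+\prescript{g}{}{\rm div}(\cdots)$ and $\delta(d{\rm vol}_g)=\tfrac12\langle g,h\rangle\,d{\rm vol}_g$ give, after discarding the divergence (admissible for compactly supported $h$),
\[
\delta\!\int_M\!\big(\prescript{g}{}{\rm R}+n(n-1)\big)\,d{\rm vol}_g=-\int_M\!\Big\langle \prescript{g}{}{\rm G}-\tfrac{n(n-1)}{2}g,\,h\Big\rangle d{\rm vol}_g.
\]
For the self-action term the key observation---which also underlies the later decoupling in Lemma \ref{differ:cal}---is that the curvature two-form $\Omega_\omega$ is metric-independent, so varying $g$ affects only the two inverse-metric contractions and the volume factor in $Q=-\tfrac14 g^{ik}g^{jm}q(\Omega_\omega)_{ijkm}$. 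Using $\delta g^{ik}=-g^{ip}g^{kr}h_{pr}$ together with the symmetries of $q(\Omega_\omega)_{ijkm}$ (antisymmetric in each pair, symmetric under exchange of the two pairs), the two contractions contribute equally and one obtains $\delta Q=\langle \tfrac12 g^{km}q(\Omega_\omega)_{ikjm},h\rangle$; combining with the volume variation $\tfrac12 Q\,\langle g,h\rangle$ assembles precisely the stress-energy tensor $K_{g,\omega}$ of (\ref{exp:K}). Setting the total metric variation to zero for all $h$ then gives $\prescript{g}{}{\rm G}-\tfrac{n(n-1)}{2}g=K_{g,\omega}$.

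The computation is essentially routine once the variational identities are in hand; the step demanding the most care is the first variation of the self-action density $Q$, where the correct combinatorial factor of two from the two inverse-metric factors and the precise index placement must be tracked so that the emerging tensor matches (\ref{exp:K}) exactly. A secondary point worth stating explicitly is the independence of $\Omega_\omega$ from $g$, since it is exactly what permits the metric and connection variations to be carried out separately and cleanly.
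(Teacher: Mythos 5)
Your proposal is correct and follows essentially the same route as the paper: the paper's proof consists precisely of stating the two partial first-variation identities (\ref{gaug:met}) and (\ref{gaug:con}) for compactly supported variations in the metric and connection directions, which are exactly the identities you derive. Your write-up merely fills in the standard computations (Einstein--Hilbert variation, the variation of $Q$ through the inverse-metric contractions, and integration by parts via Lemma \ref{adj:d}) that the paper leaves to the cited reference \cite[Theorem 9.3.3]{bleecker2005gauge}.
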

		
		\begin{proof}
			This follows from the identities 
			\begin{equation}\label{gaug:met}
				\frac{d}{dt}\mathcal Y(g+th,\omega)\big|_{t=0}=\int_M\left\langle-\left(\prescript{g}{}{\rm G} - \frac{n(n-1)}{2}g\right) +K_{g,\omega},h\right\rangle d{\rm vol}_g,
			\end{equation}
			holding for any compactly supported $h\in \mathcal S^2(M)$, and 
			\begin{equation}\label{gaug:con}
				\frac{d}{dt}\mathcal Y(g,\omega+ta)\big|_{t=0}=-\int_M\left\langle \prescript{g}{}{d}_\omega^*\Omega_\omega,a\right\rangle d{\rm vol}_g, 
			\end{equation}
			which holds for any  $a\in\mathcal A^1_{\rm Ad}(P,\mathfrak l)$ whose ``projection'' on $M$ is compactly supported.
		\end{proof}
		
		\begin{remark}\label{kal:klein}
			If $P=M\times\mathbb S^1$, we recover the Kaluza-Klein field equations.
		\end{remark}

		For our purposes, and having (\ref{bian:id:2}) in mind, it is convenient to rewrite the EYM field equations as 
		\begin{equation}\label{eym:field:rew}
			\left\{
			\begin{array}{rcl}
				\prescript{g}{}{\rm Ric}+ng & = & \widetilde{K}_{g,\omega}\\
				&&\\
				\prescript{g}{}{d}^*_\omega\Omega_\omega & = & 0
			\end{array}
			\right.
		\end{equation}
		where
		\begin{equation}\label{exp:K:tilde}
			\widetilde{K}_{g,\omega}={K}_{g,\omega}-\frac{\kappa_{g,\omega}}{n-1}g, \quad \kappa_{g,\omega}=\prescript{g}{}{\rm tr}\,{K}_{g,\omega}
		\end{equation}
		is the {\em modified stress-energy tensor}. 
		
		\begin{remark}\label{conf:inv}
			From (\ref{exp:K}) we get
			\[
			\kappa_{g,\omega}=\frac{n-3}{2}Q(g,\omega),
			\]
			so that 
			\begin{equation}\label{exp:K:tilde:2}
				\widetilde{K}_{g,\omega}=K_{g,\omega}-\frac{n-3}{2(n-1)}Q(g,\omega)g=\frac{1}{2}g^{km}q(\Omega_\omega)_{\cdot k\cdot m}-\frac{1}{n-1}Q(g,\omega)g.
			\end{equation}
			Also, it follows from (\ref{blee:eym:eq}) that a EYM field $(g,\omega)$ satisfies 
			\[
			\prescript{g}{}{\rm R}=-n(n+1)-\frac{2\kappa_{g,\omega}}{n-1}. 
			\] 
			In particular, $\kappa_{g,\omega}=0$ and $\prescript{g}{}{\rm R}=-12$ if $n=3$, which reflect the conformal invariance of the Yang-Mills self-action in dimension $n+1=4$ \cite{parker1982gauge}. 
		\end{remark}
		
		\begin{remark}\label{K:quad}
			There are two points regarding $\widetilde K_{g,\omega}$ which follow immediately from (\ref{exp:K:tilde:2}) and are worth mentioning here:
			\begin{itemize}
				\item its dependence on $\omega$ takes place through the curvature $\Omega_\omega$;
				\item this dependence is {\em quadratic} in $\Omega_\omega$. 
			\end{itemize}
		\end{remark}
		
		The next result  explores the ``local'' conservation law in (\ref{bian:id}) and provides a differential identity satisfied by $\widetilde{K}_{g,\omega}$ which plays an important role in the proof of our main result.  
		
		\begin{lemma}\label{usef:lem}
			There holds $\prescript{g}{}{\mathcal B}\widetilde{K}_{g,\omega}=0$ for {\em any} $(g,\omega)\in \mathcal M_M\times \mathcal C_P$.
		\end{lemma}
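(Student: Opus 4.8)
The plan is to prove the identity by a direct computation at an arbitrary point of $M$; the fact that it is asserted for \emph{every} configuration, and not only for critical ones, should be traced back to the two differential identities that hold universally, namely the contracted Bianchi identity (\ref{bian:id}) for the Einstein tensor and the \emph{second} Bianchi identity $d_\omega\Omega_\omega=0$ for the curvature of $\omega$. First I would split the Bianchi map as
\[
\prescript{g}{}{\mathcal B}\widetilde K_{g,\omega}=\prescript{g}{}{\delta}\widetilde K_{g,\omega}+\tfrac12\,d\!\left(\prescript{g}{}{\rm tr}\,\widetilde K_{g,\omega}\right)
\]
and dispose of the scalar part. From (\ref{exp:K:tilde:2}) together with Remark \ref{conf:inv} one computes $\prescript{g}{}{\rm tr}\,\widetilde K_{g,\omega}=\tfrac{3-n}{n-1}Q(g,\omega)$, so the trace--gradient term is a fixed multiple of $dQ(g,\omega)$; on the other hand, applying $\prescript{g}{}{\delta}$ to the pure--trace summand $-\tfrac{\kappa_{g,\omega}}{n-1}g$ of $\widetilde K_{g,\omega}$ and using $\prescript{g}{}{\delta}(fg)=-df$ produces a second multiple of $dQ(g,\omega)$. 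The point of the trace modification built into $\widetilde K_{g,\omega}$ is precisely that these two multiples are tuned so as to cancel, whence the whole statement collapses to the vanishing of the Bianchi map of the genuinely tensorial part $\tfrac12 g^{km}q(\Omega_\omega)_{\cdot k\cdot m}$.

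The heart of the matter is therefore the divergence of this quadratic curvature term. In an orthonormal coframe I write it as $\tfrac12\,q(\Omega_{ik},\Omega_{jk})$ and distribute the covariant derivative by Leibniz, which is legitimate because the connection induced by $\omega$ on ${\rm Ad}\,\mathfrak l$ is compatible with $q$ (this is where the ${\rm Ad}$--invariance of $q$ enters). This produces a genuine divergence term $q(\nabla_i\Omega_{ik},\Omega_{jk})$ together with a term $q(\Omega_{ik},\nabla_i\Omega_{jk})$ in which the derivative falls on a free index. On the latter I would substitute the second Bianchi identity in the form $\nabla_i\Omega_{jk}=\nabla_j\Omega_{ik}-\nabla_k\Omega_{ij}$ and exploit the antisymmetry of $\Omega_\omega$ together with the symmetry of $q$; after relabelling, this term reorganizes exactly into the $dQ(g,\omega)$ contribution already neutralized by the trace modification. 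The remaining divergence term is governed, through Lemma \ref{adj:d}, by the codifferential $\prescript{g}{}{d}^*_\omega\Omega_\omega$, and the plan is to show, by a further appeal to the Bianchi identities and the grading of $\prescript{g}{}{\mathcal B}$, that it enters $\prescript{g}{}{\mathcal B}\widetilde K_{g,\omega}$ with vanishing total coefficient.

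As a conceptual cross-check that also fixes every sign, I would read the identity as the Noether law attached to the invariance of the self--action $\int_M Q(g,\omega)\,d{\rm vol}_g$ under the joint action of the diffeomorphism group of $M$ and the bundle automorphisms of $P$ covering it. Differentiating this invariance along the flow of a compactly supported vector field --- whose horizontal lift moves $\omega$ through $\iota(\cdot)\Omega_\omega$ and $g$ through $\mathbb L g=2\prescript{g}{}{\delta}^*(\cdot)$, cf. Lemma \ref{div:adj} --- and feeding in the variational formulas (\ref{gaug:met})--(\ref{gaug:con}) reproduces the differential identity for $\widetilde K_{g,\omega}$. I expect the main obstacle to be bookkeeping rather than conceptual: marshalling the several first--derivative--of--curvature terms and verifying that the second Bianchi identity redistributes them so that the surviving divergence term is absorbed, with the $n$--dependent factors coming from the trace modification lining up exactly. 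It is the rigidity of this last cancellation, powered entirely by $\prescript{g}{}{\delta}\prescript{g}{}{\rm G}=0$ and $d_\omega\Omega_\omega=0$, that forces the identity to hold for an arbitrary $(g,\omega)\in\mathcal M_M\times\mathcal C_P$.
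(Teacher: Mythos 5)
Your opening reduction is correct and coincides with the paper's own arithmetic: since $\prescript{g}{}{\rm tr}\,\widetilde K_{g,\omega}=-\frac{2}{n-1}\kappa_{g,\omega}$ and $\prescript{g}{}{\delta}(\kappa_{g,\omega}g)=-d\kappa_{g,\omega}$, the two multiples of $d\kappa_{g,\omega}$ cancel exactly, so that $\prescript{g}{}{\mathcal B}\widetilde K_{g,\omega}=\prescript{g}{}{\delta}K_{g,\omega}$ on the nose. The genuine gap is at the heart of your direct computation. After the Leibniz expansion (legitimate, as you note, by ${\rm Ad}$-invariance of $q$) and the second Bianchi identity, the derivative-on-a-free-index term reorganizes as $q(\Omega^{ik},\nabla_i\Omega_{jk})=\frac12 q(\Omega^{ik},\nabla_j\Omega_{ik})$, and this is neutralized not by the trace modification (which was already spent in the first step) but by the $dQ$ coming from the summand $\frac12 Q(g,\omega)g$ inside $K_{g,\omega}$ itself. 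What survives is the genuine divergence term, and at that point every identity at your disposal has been used: $d_\omega\Omega_\omega=0$ is spent, and the grading of $\prescript{g}{}{\mathcal B}$ offers nothing further. Carried out honestly, your computation yields (up to the sign conventions of Lemma \ref{adj:d})
\begin{equation*}
\prescript{g}{}{\mathcal B}\widetilde K_{g,\omega}\;=\;\prescript{g}{}{\delta}K_{g,\omega}\;=\;\pm\tfrac12\, q\!\left(\iota_{(\cdot)}\Omega_\omega,\;\prescript{g}{}{d}^*_\omega\Omega_\omega\right),
\end{equation*}
the classical statement that the Yang--Mills stress-energy tensor is conserved \emph{modulo the second field equation}. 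The right-hand side does not vanish for an arbitrary configuration: take $P=M\times\mathbb S^1$, $g$ flat, and a connection with local form $f(x)\,dy$, so $\Omega_\omega=f'(x)\,dx\wedge dy$; then $\prescript{g}{}{\mathcal B}\widetilde K_{g,\omega}$ is a nonzero multiple of $f'(x)f''(x)\,dx$ for generic $f$. So the ``vanishing total coefficient'' you postulate at the last step is not a bookkeeping matter that further Bianchi identities will repair; on your route the identity comes out only when $\prescript{g}{}{d}^*_\omega\Omega_\omega=0$ or $\Omega_\omega=0$, which is strictly weaker than the off-shell claim of Lemma \ref{usef:lem}.

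The same residue undermines your Noether cross-check, and it is exactly where you part ways with the paper's proof. You correctly let the lifted flow move $\omega$ through $\iota_X\Omega_\omega$ and $g$ through $2\prescript{g}{}{\delta}^*X^\flat$; feeding this into (\ref{gaug:met})--(\ref{gaug:con}) produces $2\int_M\langle\prescript{g}{}{\delta}K_{g,\omega},X^\flat\rangle\,d{\rm vol}_g=\int_M q\big(\iota_X\Omega_\omega,\prescript{g}{}{d}^*_\omega\Omega_\omega\big)\,d{\rm vol}_g$, i.e.\ precisely the displayed identity with the Yang--Mills term present --- consistent with your direct computation, but not the unconditional vanishing you say it ``reproduces.'' The paper's proof differs at exactly this point: it inserts $h=\mathbb L_Xg$ into (\ref{gaug:met}) while holding $\omega$ entirely fixed, asserts that the left-hand side vanishes by diffeomorphism invariance, and thereby obtains $\prescript{g}{}{\delta}K_{g,\omega}=0$ off-shell directly from Lemma \ref{div:adj} and (\ref{bian:id}), with no $\iota_X\Omega_\omega$ contribution ever entering; the trace arithmetic then finishes as in your first step. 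That argument rests on the invariance of $\int_M Q(g,\omega)\,d{\rm vol}_g$ under pulling back the metric \emph{alone}, and the discrepancy between it and your lifted-flow version is exactly the term $q(\iota_X\Omega_\omega,\prescript{g}{}{d}^*_\omega\Omega_\omega)$ that your computation leaves over. To close your proposal you would have to justify that metric-only invariance; absent that, what you have proved is the on-shell conservation law, not the lemma as stated.
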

		
		\begin{proof}
			This is just the well-known Noether's variational principle applied to our setting \cite{kosmann2012noether};  compare with \cite[Theorem 9.2.17]{bleecker2005gauge}.
			Indeed, for any  $X\in\mathcal X(M)$ it follows from (\ref{exp:met}) below that  
			$h=\mathbb L_Xg$ comes from a deformation induced by a local flow of diffeomorphims, so the left-hand side of (\ref{gaug:met}) vanishes due to diffeomorphism invariance.  Thus, we may use Lemma \ref{div:adj} and (\ref{bian:id}) to check that  ${K}_{g,\omega}$ is ``locally'' conserved:
			\[
			\prescript{g}{}{\delta}{K}_{g,\omega}=	\prescript{g}{}{\delta}\left(\prescript{g}{}{\rm G} - \frac{n(n-1)}{2}g\right)=0.
			\]
			We now compute using (\ref{exp:K:tilde}):
			\begin{eqnarray*}
				\prescript{g}{}{\mathcal B}\widetilde {K}_{g,\omega}
				&=& \left(\prescript{g}{}{\delta}+\frac{1}{2}d\prescript{g}{}{\rm tr}\right)\left({K}_{g,\omega}-\frac{\kappa_{g,\omega}}{n-1}g\right)\\
				& = & \frac{d\kappa_{g,\omega}}{n-1}+\frac{d\kappa_{g,\omega}}{2}-\frac{n+1}{2(n-1)}{d\kappa_{g,\omega}}\\
				& = & 0
			\end{eqnarray*}
		\end{proof}
		
		\begin{remark}\label{off-shell}
			It is immediate from (\ref{bian:id:2}) that the conclusion of Lemma \ref{usef:lem} holds whenever $(g,\omega)$ is a EYM field. But the crucial point here, which turns out to be essential for the applications we have in mind, is that it actually holds {\em off-shell}, that is, irrespective of the configuration being a solution of (\ref{eym:field:rew}). 
		\end{remark}

		We now observe that the action (\ref{eym:action}) admits a large gauge symmetry group which, as is well-known, spoils the ellipticity of the system of equations defined by (\ref{eym:field:rew}). 
		First, the gauge group 
		$\mathpzc L_P$ of $P$, formed by
		automorphisms of $P$ (i.e diffeomorphism $f:P\to P$ commuting with the right $L$-action) and
		inducing the identity on $M$, acts naturally on $\mathcal C_P$ by pull-back.
		Actually, $\mathpzc L_P=C^\infty({\rm Ad}\,L)$, where ${\rm Ad}\,L=P\times_{\rm Ad}L$ is the Lie group bundle associated to the adjoint action of $L$ on itself.
		We also observe that the group ${\rm Diff}(M)$ of diffeomorphisms of $M$ acts naturally on $\mathcal M_M$ by pullback. 	Combining these we obtain an action of the {\em gauge group} 
		\begin{equation}\label{gauge:produ}
			\mathscr G:={\rm Diff}^o(M)\times \mathpzc L_P
		\end{equation}
		on the configuration space $\mathcal M_M\times \mathcal C_P$, where  ${\rm Diff}^o(M)\subset {\rm Diff}(M)$ is the connected component of the identity.
		Concretely, $(g,\omega)\cdot\Phi=(\Phi_1^*g,\Phi_2^*\omega)$, $(g,\omega)\in \mathcal M_M\times\mathcal C_P$, $\Phi=(\Phi_1,\Phi_2)\in\mathscr G$. Clearly, this preserves the EYM action (\ref{eym:action}).

		We may now explain how this gauge action of $\mathscr G$ on $\mathcal M_M\times \mathcal C_P$ spoils ellipticity of (\ref{eym:field:rew}) at the infinitesimal level. First, the Lie algebra of $\mathpzc L_P$ is  $\mathpzc l_P=C^\infty({\rm Ad}\,\mathfrak l)$, where ${\rm Ad}\,\mathfrak l=P\times_{\rm Ad}\mathfrak l$ is the Lie algebra bundle associated to the adjoint representation, 
		and there
		exists an exponential map ${\rm Exp}:{\mathpzc l}_P\to \mathpzc L_P$ induced by $\exp:\mathfrak l\to L$.
		For $\mathfrak a\in \mathpzc l_P$ we thus have
		\begin{equation}\label{exp:con}
			\frac{d}{dt} \omega\cdot{{\rm Exp}}\,t\mathfrak a\big|_{t=0}=d_\omega \mathfrak a, \quad \omega\in \mathcal C_P.
		\end{equation}
		Similarly, the Lie algebra of ${\rm Diff}^o(M)$ is $\mathcal X(M)$, the space of vector fields on $M$, with the exponential map ${\mathsf{ Exp}}:\mathcal X(M)\to {\rm Diff}^o(M)$ being such that $t\mapsto {\mathsf{ Exp}}(tX)$ corresponds to flowing along the integral curves of $X$. Hence,
		\begin{equation}\label{exp:met}
			\frac{d}{dt}g\cdot{{\mathsf{Exp}}}\,tX\big|_{t=0}=\mathbb L_Xg, \quad g\in\mathcal M_M.
		\end{equation}
		Now, the linearization of the map $\omega\mapsto \prescript{g}{}{d}^*_\omega\Omega_\omega$, helding $g$ fixed and in the direction of $a\in \mathcal A^1_{\rm Ad}(P,\mathfrak l)$,  is the map 
		\begin{equation}\label{exp:con:2}
			a\mapsto 
			\prescript{g}{}{d}^*_\omega d_\omega a+a^{\sharp}\iprod\Omega_\omega=\prescript{g}{}{\Delta}_\omega^1a-d_\omega\prescript{g}{}{d}^*_\omega a+a^{\sharp}\iprod\Omega_\omega. 
		\end{equation}
		Although the twisted Hodge Laplacian $\prescript{g}{}{\Delta}_\omega^1$ is a 
		generalized Laplacian, hence elliptic, the remaining second order term $a\mapsto -d_\omega\prescript{g}{}{d}^*_\omega a$ spoils ellipticity 
		and reflects the infinitesimal effect of the gauge action of $\mathpzc L_P$ on $\mathcal C_P$ as described in (\ref{exp:con}) (with $\mathfrak a=d^*_\omega a$).
		Also, it is known that the linearization of the ``Poincar\'e-Einstein map'' $g\mapsto \prescript{g}{}{\rm Ric}+ng$ in the direction of $A\in\mathcal S^2(M)$ is the map 
		\begin{equation}\label{exp:met:2}
			A\mapsto \prescript{g}{}{\mathscr L}A:= \frac{1}{2}\left(\prescript{g}{}{\Delta}_LA+ 2n A-\mathbb L_{(\prescript{g}{}{\mathcal B}A)^\sharp}g\right).
		\end{equation}
		Although the Lichnerowicz Laplacian $\prescript{g}{}{\Delta}_L$, defined in Section \ref{pert:lich} below, is a geometric generalized Laplacian, hence elliptic, the remaining second order term $A\mapsto -\mathbb L_{(\prescript{g}{}{\mathcal B}A)^\sharp}g$ again spoils ellipticity and reflects the infinitesimal effect of the gauge action of ${\rm{Diff}}^o(M)$ on $\mathcal M_M$ as described in (\ref{exp:met}) (with $X=(\prescript{g}{}{\mathcal B}A)^\sharp$).
		
		\begin{remark}\label{inc:discuss}
			Upon comparison of the Poincar\'e-Einstein map above with the first field equation in (\ref{eym:field:rew}), one notes that the modified stress-energy tensor $\widetilde K_{g,\omega}$ has been overlooked. The reason for this is that our linearization analysis will always take place around a {\em trivial} configuration $(g,\omega)$, which means in particular that $\omega$ is flat ($\Omega_\omega=0$); see Definition \ref{nondeg:conf}.  Since, as already observed in Remark \ref{K:quad}, $\widetilde K_{g,\omega}$ depends quadratically on $\Omega_\omega$, we see that this quantity does not contribute to the linearizations (both with respect to $g$ and $\omega$) at such $(g,\omega)$; compare with the proof of Lemma \ref{differ:cal}.  
		\end{remark}
		
		The discussion above suggests that 
		exponentiation of $\ker \prescript{g}{}{\mathcal B}\oplus\ker \prescript{g}{}{d}_\omega^*$ should possibly yield a local slice for the natural action of the gauge group $\mathscr G$ on $\mathcal M_M\times \mathcal C_P$ around $(g,\omega)$. 
		Indeed, this works fine whenever $\prescript{g}{}{\rm Ric}<0$ and leads to 
		the {\em Bianchi-Coulomb gauge}, an amalgamation of the gauge fixings in \cite{biquard2000metriques,mazzeo2006maskit,usula2021yang} which
		effectively eliminates the troublesome terms in  (\ref{exp:con:2}) and (\ref{exp:met:2}), thus restoring ellipticity; see Section \ref{fix:gauge}.
		
		\begin{remark}\label{gaug:con:0}
			As yet another manifestation of Noether's variational principle, 
			if we take 
			$a=d_\omega\mathfrak a$, $\mathfrak a\in \mathpzc l_P=\mathcal A^0_{\rm Ad}(P,\mathfrak l)$, in (\ref{gaug:con}) it follows from (\ref{exp:con}) that the left-hand side vanishes due to gauge invariance and we thus obtain the useful identity
			\begin{equation}\label{off:shell:con}
				\prescript{g}{}{d}_\omega^*\prescript{g}{}{d}_\omega^*\Omega_\omega=0,
			\end{equation}
			which holds  {\em off-shell}; compare with Remark \ref{off-shell}. 
		\end{remark}
		
		\begin{remark}\label{ident:gauge}
			Recall that a principal bundle $P\to M$ with structural Lie group $L$ gives rise to the exact sequence
			\[
			{\rm Id}\longrightarrow  \mathpzc L_P \stackrel{\bf i}{\longrightarrow} {\rm Aut}(P)\stackrel{\bf p}{\longrightarrow} {\rm Diff}(M)
			\]
			where ${\rm Aut}(P)$ is the group of automorphisms of $P$ (hence, each element of ${\rm Aut}(P)$ induces a diffeomorphism $f:M\to M$). 
			Passing to the connected component of the identity we get the exact sequence
			\begin{equation}\label{gauge:exact}
				{\rm Id}\longrightarrow  \mathpzc L_P \stackrel{\bf i}{\longrightarrow} {\rm Aut}^o(P)\stackrel{\bf p}{\longrightarrow} {\rm Diff}^o(M)\longrightarrow {\rm Id}
			\end{equation}
			where ${\rm Aut}^o(P)={\bf p}^{-1}({\rm Diff}^o(M))$.
			If $P$ is trivial and  $P=M\times L$ is a fixed trivialization then the monomorphism
			\begin{equation}\label{split:mono}
				\alpha:{\rm Diff}^o(M)\to {\rm Aut}^o(P), \quad \alpha(f)(m,l)=(f(m),l), \quad (m,l)\in M\times L, 
			\end{equation}
			produces an splitting of (\ref{gauge:exact}), which means that ${\bf p}\circ\alpha={\rm Id}$. Thus, we obtain a semi-direct product representation 
			\begin{equation}\label{semi:dir}
				{\rm Aut}^o(P)=\mathpzc L_P \rtimes_\beta {\rm Diff}^o(M),
			\end{equation}	
			for a homomorphism $\beta:{\rm Diff}^o(M)\to {\rm Aut}(\mathpzc L_P)$.
			We now observe that triviality also  implies
			${\rm Ad}\, L=M\times L$, which gives an identification of $\mathpzc L_P$ with $({\rm Map}(M,L),\ast)$,
			the group of smooth maps from $M$ to $L$. In this representation,  $\beta(f)(\sigma)=\sigma\circ f^{-1}$, $\sigma\in {\rm Map}(M,L)$; compare with \cite[Proposition 1]{trautman2005groups}. Thus, the product rule in (\ref{semi:dir}) is 
			\[
			\left(\left(\sigma,f\right),\left(\sigma',f'\right)\right)\mapsto 
			\left(\sigma \ast (\sigma'\circ f^{-1}),f\circ f'\right).
			\]  
			Upon comparison with (\ref{gauge:produ}) we see that, at least in case $P$ is trivial, which is the only case where our main result (Theorem \ref{main}) effectively applies (see Remark \ref{non:flat} and Theorem \ref{witten}), and in the presence of a given trivialization, our choice of gauge group corresponds to an ``untwisted'' version of (\ref{semi:dir}), in which the natural action of ${\rm Diff}^o(M)$ on $\mathpzc L_P$ is not taken into account. This choice may be justified as follows. It is known that each configuration $(g,\omega)$ naturally determines a so-called {\em bundle metric}, say $\mathsf G_{g,\omega}$, on $P$ with respect to which $L$ acts by isometries \cite[Definition 9.3.4]{bleecker2005gauge}. Moreover, the scalar curvature of $\mathsf G_{g,\omega}$ equals the full Lagrangian density in (\ref{eym:action}) up to an additive constant \cite[Theorem 9.3.7]{bleecker2005gauge}. In this way, the EYM field equations (\ref{blee:eym:eq}), which couples the gravitational and Yang-Mills degrees of freedom, arise as a consequence of a single geometric variational principle associated to the   ``bundle action'' 
			\[\mathsf G_{g,\omega}\in \mathcal M_P^{b}\mapsto \int_P\left(R_{\mathsf G_{g,\omega}}+c\right)d{\rm vol}_{\mathsf G_{g,\omega}},
			\]
			where $\mathcal M_P^{b}$ is the space of bundle metrics of $P$; this is the modern interpretation of the original Kaluza-Klein approach to unification as explained in \cite{hermann1978yang,bourguignon1989mathematician,bleecker2005gauge}. Clearly, our gauge action of $\mathscr G={\rm Diff}^o(M)\times \mathpzc L_P$ on configurations induces gauge transformations on $\mathcal M_P^{b}$ 
			so that the bundle action above remains constant along the corresponding orbits.
			Thus, fixing a gauge for 
			the original gauge action of $\mathscr G$ on $\mathcal M_M\times\mathcal C_P$
			amounts to fixing a gauge for this latter bundle action and corresponds to taking ``genuine'' deformations of bundles metrics (i.e. which slice across the orbits). 
		\end{remark}

		\section{Some invertible operators}\label{inv:oper}
		
		Here we describe how Theorems \ref{norm:cond:t} and \ref{th:usef:geo} can be used to establish the invertibility of certain generalized Laplacians and then draw a few consequences of relevance in what follows. We emphasize that most of the computations and results below already appear in the available literature (see \cite{graham1991einstein,lee2006fredholm,mazzeo1988hodge,mazzeo2006maskit,usula2021yang}, for instance) but for the reader's convenience we have chosen to reproduce them here in some detail. In what follows, $(M,g)$ is taken to be a ALH manifold endowed with a special defining function $x$ near infinity.
		
		\subsection{Perturbations of the Lichnerowicz Laplacian}\label{pert:lich} We recall that the Lichnerowicz Laplacian, acting on $\mathcal S^2(M)$,  
		is given by
		\[
		\prescript{g}{}{\Delta}_L=\nabla^*\nabla+ 2(\prescript{g}{}{\widetilde {\rm Ric}}-\prescript{g}{}{\widetilde {\rm Riem}}),
		\]
		where $\nabla^*\nabla$ is the Bochner Laplacian,
		\[
		(\prescript{g}{}{\widetilde {\rm Ric}}\,h)_{jl}=\frac{1}{2}\left(\prescript{g}{}{\rm Ric}_{jk}h^k_l+\prescript{g}{}{\rm Ric}_{lk}h^k_j\right),
		\]
		\[
		(\prescript{g}{}{\widetilde {\rm Riem}}\,)_{jl}=
		\prescript{g}{}{\rm Riem}_{ijkl}h^{ik},
		\]
		and ${\rm Riem}$ stands for the Riemann tensor. If $(M^{n+1},g)$ is ALH we consider 
		\[
		\prescript{g}{}{{\Delta}}_{(n)}=\prescript{g}{}{\Delta}_L+2n,
		\] 
		a geometric and formally self-adjoint generalized Laplacian  that already appears in (\ref{exp:met:2}) and also will play a central role below. 
		Our aim here is to indicate the proof of the following fundamental result.
		
		\begin{theorem}\label{grah:lee:fred} \cite{graham1991einstein}
			If $(M,g)$ is ALH then the operator
			\begin{equation}\label{grah:lee:fred:2}
				\prescript{g}{}{\Delta_{(n)}}:x^\delta C_0^{k,\alpha}(S^2\prescript{0}{}{T^*M})\to x^\delta C_0^{k-2,\alpha}(S^2\prescript{0}{}{T^*M}), \quad 0<\delta<n,
			\end{equation}
			is Fredholm. 
		\end{theorem}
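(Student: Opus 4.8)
The plan is to recognize $\prescript{g}{}{\Delta}_{(n)}=\prescript{g}{}{\Delta}_L+2n$ as a geometric, formally self-adjoint generalized Laplacian and then feed it into the abstract Fredholm criterion of Theorem \ref{norm:cond:t}. First I would record that, by the very definition in Subsection \ref{pert:lich}, $\prescript{g}{}{\Delta}_{(n)}=\nabla^*\nabla+\mathcal R$ with $\mathcal R=2(\prescript{g}{}{\widetilde{\mathrm{Ric}}}-\prescript{g}{}{\widetilde{\mathrm{Riem}}})+2n$, so its principal symbol is $|\xi|^2_g$ as in \eqref{symb:ell}; since $\mathcal R$ is a self-adjoint endomorphism built by contractions from the curvature, it is a geometric generalized Laplacian in the sense of Definition \ref{geom:op}, and formal self-adjointness is clear. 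Consequently Proposition \ref{geom:lap:norm} applies: at every $(x_0,y_0)$ the normal operator $\mathcal N_{(x_0,y_0)}(\prescript{g}{}{\Delta}_{(n)})$ is identified with the corresponding model operator $\prescript{g_{\mathbb H}}{}{\Delta}_{(n)}$ acting on symmetric $2$-tensors over $\mathbb H^{n+1}$, and in particular the indicial roots are independent of $(x_0,y_0)$.

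Next I would compute the indicial family. As in Example \ref{lap:exp:loc}, the $(x\partial_x)$-part contributes $-\zeta^2+n\zeta$ times the identity of the fibre $S^2\prescript{0}{}{T^*M}|_{(x_0,y_0)}$, while $\mathcal R$ evaluated on the constant-curvature model (using $\prescript{g_{\mathbb H}}{}{\mathrm{Ric}}=-n\,g_{\mathbb H}$ and the explicit form of $\prescript{g_{\mathbb H}}{}{\mathrm{Riem}}$) acts as a self-adjoint endomorphism with a finite, known spectrum. Decomposing $S^2\prescript{0}{}{T^*M}$ along the boundary into the pieces generated by $(dx/x)^2$, the mixed terms $(dx/x)(dy^j/x)$, and the tangential trace and trace-free parts diagonalizes this endomorphism, so that $\mathcal I(\prescript{g}{}{\Delta}_{(n)})(\zeta)$ becomes a direct sum of scalar quadratics. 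A direct (if somewhat lengthy) computation then shows that these fail to be invertible only at a finite set of real $\zeta$, none of which has real part in the open interval $(0,n)$; this is exactly the indicial computation of Graham--Lee, and it gives $(0,n)\subseteq I_{\prescript{g}{}{\Delta}_{(n)}}$.

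It then remains to establish invertibility of the normal operator in the weighted range. By the first bullet of Theorem \ref{norm:cond:t}, $\prescript{g_{\mathbb H}}{}{\Delta}_{(n)}\colon x^\delta H^k_0\to x^\delta H^{k-2}_0$ is Fredholm for every $\delta\in(0,n)$ with index independent of $\delta$; since $x^{n/2}H^0_0=L^2(\mathbb H^{n+1})$ and the operator is essentially self-adjoint there, symmetry of the indicial roots about $\mathrm{Re}\,\zeta=n/2$ forces this index to be zero, so invertibility for $\delta\in(0,n)$ reduces to the triviality of the $L^2$-kernel. I would prove $\ker\,\prescript{g_{\mathbb H}}{}{\Delta}_{(n)}\big|_{L^2}=\{0\}$ by a Bochner/integration-by-parts estimate on $\mathbb H^{n+1}$, the tensorial analogue of the McKean inequality \eqref{mckean:funct}: after a bit of $0$-regularity theory placing any weighted null solution in $L^2$, one pairs $\prescript{g_{\mathbb H}}{}{\Delta}_{(n)}h=0$ with $h$, integrates by parts to produce $\|\nabla h\|^2$ plus the zeroth-order term $\langle(2(\widetilde{\mathrm{Ric}}-\widetilde{\mathrm{Riem}})+2n)h,h\rangle$, and uses the constant curvature to obtain a strictly positive lower bound forcing $h=0$. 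With the model operator invertible for each $\delta\in(0,n)$, the second bullet of Theorem \ref{norm:cond:t} (equivalently, condition (1) of Theorem \ref{th:usef:geo}) yields that \eqref{grah:lee:fred:2} is Fredholm of index zero for every such $\delta$, which is the assertion.

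I expect the main obstacle to be precisely this last $L^2$-kernel triviality for the model operator on $\mathbb H^{n+1}$. The difficulty is that the curvature endomorphism $2(\widetilde{\mathrm{Ric}}-\widetilde{\mathrm{Riem}})+2n$ is \emph{not} sign-definite on all of $S^2$, so the naive Bochner argument does not close on every tensor component; one must exploit the sharp eigenvalue structure of $\widetilde{\mathrm{Ric}}-\widetilde{\mathrm{Riem}}$ on the constant-curvature background, together with the fact that $\delta$ lies strictly inside the non-indicial interval, to recover positivity. This is the analytic heart of the Graham--Lee result, and the spectral/indicial bookkeeping carried out above is what makes it work.
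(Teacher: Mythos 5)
Your proposal is correct and takes essentially the same route as the paper, which proves the slightly more general statement for $\prescript{g}{}{\Delta}_{(\mu)}=\prescript{g}{}{\Delta}_L+2\mu$: geometricity plus Proposition \ref{geom:lap:norm} identifies the normal operator with the model operator on $\mathbb H^{n+1}$, the indicial roots are computed through the same trace/trace-free (Mazzeo-type) decomposition of $S^2\prescript{0}{}{T^*M}$ near the boundary (giving the non-indicial interval $(0,n)$ when $\mu=n$), and the $L^2$-kernel of the model is killed by exactly the sharp tensorial McKean estimate you anticipated, namely $\int_{\mathbb H^{n+1}}\langle\nabla^*\nabla h,h\rangle\, d{\rm vol}_{g_{\mathbb H}}\geq\left(\tfrac{n^2}{4}+2\right)\int_{\mathbb H^{n+1}}|h|^2\, d{\rm vol}_{g_{\mathbb H}}$ for trace-free tensors (the paper cites \cite[Lemma 7.12]{lee2006fredholm}), together with (\ref{mckean:funct}) for the pure-trace part. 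One small precision: on $\mathbb H^{n+1}$ the zeroth-order term of $\prescript{g_{\mathbb H}}{}{\Delta}_{(n)}$ is simply the constant $-2$ on trace-free tensors and $+2n$ on the pure-trace part, so positivity is recovered solely from the improved spectral gap $\tfrac{n^2}{4}+2$ of the rough Laplacian on trace-free tensors, not from any finer eigenvalue structure of $\widetilde{\rm Ric}-\widetilde{\rm Riem}$ nor from the location of $\delta$ inside the Fredholm range (the latter only enters the regularity step placing weighted null solutions in $L^2$).
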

		
		Although we will not need this stronger version here, we prove below a more general result in order to illustrate the effectiveness of the $0$-calculus. 
		
		\begin{theorem}\label{grah:lee:fred:g} \cite[Proposition E]{lee2006fredholm}
			If $(M,g)$ is ALH and 
			\[
			\mu>n-\frac{n^2}{8}
			\]
			then the  operator
			\[
			\prescript{g}{}{\Delta}_{(\mu)}:=\prescript{g}{}{\Delta}_L+2\mu:x^\delta C_0^{k,\alpha}(S^2\prescript{0}{}{T^*M})\to x^\delta C_0^{k-2,\alpha}(S^2\prescript{0}{}{T^*M}), \quad \delta^{[3]}_-<\delta<\delta^{[3]}_+,
			\]
			is Fredholm, where  
			\begin{equation}\label{delta:3:pm}
				\delta^{[3]}_\pm=\frac{1}{2}\left(n\pm\sqrt{n^2+8(\mu-n)}\right).
			\end{equation}
		\end{theorem}
		
		
		The first step is to find a non-indicial interval for $\prescript{g}{}{\Delta}_{(\mu)}$.  
		Below we shall extensively use both the computation in (\ref{lap:ind}), which shows that the indicial family of the scalar Laplacian is $I({\prescript{g}{}{\Delta}})(\zeta)=-\zeta^{2}+n\zeta$,  and that near infinity we have
		\begin{equation}\label{ric:inf}
			(\prescript{g}{}{\widetilde{\rm Ric}}\,h)_{ij}={-nh_{ij}}+o(1)
		\end{equation}
		and 
		\begin{equation}\label{riem:inf}
			(\prescript{g}{}{\widetilde{\rm Riem}}\,h)_{ij}=-({}{g_{kl}}g_{ij}-g_{kj}g_{il})h^{kl}+o(1).
		\end{equation}
		
		We observe the decomposition 
		\[
		\mathcal S^2(M)=\mathcal S^2_{(tl)}(M)\oplus C^\infty(M)g, \quad h=h^{(tl)}+\frac{\prescript{g}{}{\rm tr}\,h}{n+1}g,
		\]
		where $h^{(tl)}$ is the trace-less piece of $h$. 
		Clearly, 
		$\nabla^*\nabla(\phi g)=(\prescript{g}{}{\Delta}\phi)g$ and a further computation using (\ref{ric:inf}) and (\ref{riem:inf}) shows that $\prescript{g}{}{\widetilde{\rm Ric}}-\prescript{g}{}{\widetilde{\rm Riem}}=o(1)$ on $C^\infty(M)g$. Hence, 
		\[
		\prescript{g}{}{\Delta}_{(\mu)}\big|_{C^\infty(M)g}(\phi g)=\left((\prescript{g}{}{\Delta}+2\mu)\phi\right)g+o(1),
		\]
		so by (\ref{lap:ind}) the indicial roots of $\prescript{g}{}{\Delta}_{(\mu)}|_{C^\infty(M)g}$ are
		\[
		\delta^\infty_{\pm}=\frac{1}{2}\left(n\pm\sqrt{n^2+8\mu}\right).
		\]
		
		We now look at the restriction of $\prescript{g}{}{\Delta}_{(\mu)}$ to $\mathcal S^2_{(tl)}(M))$. Following \cite[Section 2.2]{mazzeo2006maskit} we fix coordinates $z=(x,y_1,\dots,y_n)$ near infinity and decompose $\mathcal S^2_{(tl)}(M)$ accordingly.
		\begin{itemize}
			\item 
			The first factor, say $\mathcal S^{2,[1]}_{(tl)}(M)$, is generated by 
			\[
			u^{[00]}=n\frac{dx\otimes dx}{x^2}-\sum_i\frac{dy_i\otimes dy_i}{x^2}, \quad i=1,\dots,n.
			\]
			A computation shows that 
			\[
			\prescript{g}{}{\Delta}_{(\mu)}(\phi_{00} u^{[00]} )=\left(\left(\prescript{g}{}{\Delta}+2\mu\right)\phi_{00}\right)u^{[00]}+o(1). 
			\]
			Thus, the indicial roots of $\prescript{g}{}{\Delta}_{(\mu)}|_{\mathcal S^{2,[1]}_{(tl)}(M)}$ are
			\[
			\delta_{\pm}^{[1]}=\frac{1}{2}\left(n\pm\sqrt{n^2+8\mu}\right).
			\]
			\item 	The next factor, say $\mathcal S^{2,[2]}_{(tl)}(M)$, is generated by 
			\[
			u^{[0i]}:=\frac{dx\otimes dy_i}{x^2}.
			\]
			If $	u=\sum_i\phi_{i}u^{[0i]}$
			a computation shows that 
			\[
			\prescript{g}{}{\Delta}_{(\mu)} u=\sum_i\left(\left(\prescript{g}{}{\Delta}+2\mu-n+1\right)\phi_i\right)u^{[0i]}+o(1), 
			\]
			so
			the indicial roots of $\prescript{g}{}{\Delta}_{(\mu)}|_{\mathcal S^{2,[2]}_{(tl)}(M)}$ are
			\[
			\delta_{\pm}^{[2]}=\frac{1}{2}\left(n\pm\sqrt{n^2+4(2\mu-n+1)}\right).
			\]
			\item 	
			The last factor, say $\mathcal S^{2,[3]}_{(tl)}(M)$, is generated by 
			\[
			u^{[11]}:=(n-1)\frac{dy_1\otimes dy_1}{x^2}-\sum_{\alpha}\frac{dy_\alpha\otimes dy_\beta}{x^2},  
			\]
			\[
			u^{[\alpha\beta]}:=\frac{dy_\alpha\otimes dy_\beta}{x^2},\quad \alpha\neq\beta,
			\]
			and 
			\[
			\quad	u^{[1\alpha]}:=\frac{dy_1\otimes dy_\alpha}{x^2},
			\]
			where $\alpha, \beta=2,\dots,n$. 	If $	u=\sum_{ij}\phi_{ij}u^{[ij]}$
			a computation shows that 
			\[
			\prescript{g}{}{\Delta}_{(\mu)} u=\sum_{ij}\left((\prescript{g}{}{\Delta}+2(\mu-n))\phi_{ij}\right)u^{[ij]}+o(1),
			\]
			so the indicial roots of $\prescript{g}{}{\Delta}_{(\mu)}\big|_{\mathcal S^{2,[3]}_{(tl)}(M)}$ are $\delta^{[3]}_\pm$ as defined in (\ref{delta:3:pm}).
		\end{itemize}
		
		The aftereffect of the computation above is that we may take $I_{\prescript{g}{}{{\Delta}}_{(\mu)}}=(\delta^{[3]}_-,\delta^{[3]}_+)$, which is realized for $\prescript{g}{}{\Delta}_{(\mu)}\big|_{\mathcal S^{2,[3]}_{(tl)}(M)}$. 
		
		In order to complete the proof of  Theorem \ref{grah:lee:fred:g} via Theorem \ref{norm:cond:t}, we must check that the normal operator of $\prescript{g}{}{{\Delta}}_{(\mu)}$ is invertible. Since $\prescript{g}{}{{\Delta}}_{(\mu)}$ is (obviously) geometric, it follows from Proposition \ref{geom:lap:norm} that its normal operator agrees with the corresponding geometric operator acting on $\mathbb H^{n+1}$, so that by the argument leading to Theorem \ref{th:usef:geo} it suffices to check that the $L^2$ kernel of this latter operator is trivial. Since 
		\[
		\prescript{g}{}{\Delta}_{(\mu)}\big|_{\mathcal S^{2}_{(tl)}(\mathbb H^{n+1})}=\nabla^*\nabla+2(\mu-n-1), \quad 
		\prescript{g}{}{\Delta}_{(\mu)}\big|_{C^\infty(\mathbb H^{n+1})g_{\mathbb H}}=\prescript{g_{\mathbb H}}{}{\Delta}+2\mu,
		\] 
		this follows from (\ref{mckean:funct}) and the  
		{McKean-type estimate}
		\[
		\int_{\mathbb H^{n+1}}\langle\nabla^*\nabla h,h\rangle d{\rm vol}_{g_{\mathbb H}}\geq\left(\frac{n^2}{4}+{2}\right)\int_{\mathbb H^{n+1}}|h|^2 d{\rm vol}_{g_{\mathbb H}}, \quad h\in C^\infty_{\rm cpt}(\mathcal S_{(tl)}^{2}(\mathbb H^{n+1}));
		\]
		compare with \cite[Lemma 7.12]{lee2006fredholm}. This completes the proof of  Theorem \ref{grah:lee:fred:g}.
		
		For later reference we record an immediate consequence of the argument above. Let us recall a central concept in the deformation theory of PE metrics.
		
		\begin{definition}\label{nondeg:pe}
			A PE manifold $(M,g)$ is {\rm non-degenerate} if $\ker \prescript{g}{}{{\Delta}}_{(n)}\big|_{L^2}$ is trivial. 
		\end{definition}
		
		\begin{corollary}[of the proof]\label{cor:rel}
			If a PE manifold $(M,g)$ is non-degenerate then (\ref{grah:lee:fred:2}) is invertible. In particular, this happens for $(M,g)=(\mathbb H^{n+1},g_{\mathbb H})$. 
		\end{corollary}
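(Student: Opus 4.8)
The plan is to read the statement off as the $\mu=n$ instance of Theorem \ref{grah:lee:fred:g} and then to promote Fredholmness to invertibility by invoking Theorem \ref{th:usef:geo}. First I would substitute $\mu=n$ into the formula for $\delta^{[3]}_\pm$, obtaining $\delta^{[3]}_\pm=\frac{1}{2}\left(n\pm\sqrt{n^2}\right)$, so that $\delta^{[3]}_-=0$ and $\delta^{[3]}_+=n$. Hence the non-indicial interval of $\prescript{g}{}{\Delta}_{(n)}$ is exactly $I_{\prescript{g}{}{\Delta}_{(n)}}=(0,n)\neq\emptyset$, which both confirms that the admissible range of $\delta$ in (\ref{grah:lee:fred:2}) is the correct one and records the positivity of the indicial radius demanded by Theorem \ref{th:usef:geo}. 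Since $\prescript{g}{}{\Delta}_{(n)}=\prescript{g}{}{\Delta}_L+2n$ is manifestly geometric (a Lichnerowicz Laplacian plus a constant multiple of the identity) and formally self-adjoint, that theorem applies as soon as its two hypotheses are verified.

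For hypothesis (1) I would simply point back to the proof of Theorem \ref{grah:lee:fred:g}: by Proposition \ref{geom:lap:norm} the normal operator of $\prescript{g}{}{\Delta}_{(n)}$ is identified with $\prescript{g}{}{\Delta}_{(n)}$ acting on $\mathbb H^{n+1}$, and specializing the two McKean-type estimates appearing there to $\mu=n$ gives a lower bound $\tfrac{n^2}{4}+2n>0$ on the pure-trace summand $C^\infty g_{\mathbb H}$ and $\tfrac{n^2}{4}>0$ on the trace-free summand $\mathcal S^2_{(tl)}(\mathbb H^{n+1})$ (where $\prescript{g}{}{\Delta}_{(n)}=\nabla^*\nabla-2$), so the $L^2$ kernel of the normal operator is trivial. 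Hypothesis (2) is nothing but the standing hypothesis of the corollary, namely $\ker\prescript{g}{}{\Delta}_{(n)}\big|_{L^2}=\{0\}$, which is the very definition of non-degeneracy (Definition \ref{nondeg:pe}). With both items in hand, Theorem \ref{th:usef:geo} delivers the invertibility of (\ref{grah:lee:fred:2}) for every $\delta\in(0,n)$.

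For the final assertion, with $(M,g)=(\mathbb H^{n+1},g_{\mathbb H})$, I would observe that on hyperbolic space the operator coincides with its own normal operator, so that hypothesis (2) collapses onto hypothesis (1); the same specialized McKean estimates then show directly that $\ker\prescript{g_{\mathbb H}}{}{\Delta}_{(n)}\big|_{L^2}=\{0\}$, i.e.\ that $\mathbb H^{n+1}$ is non-degenerate, whence invertibility follows from the first part. I do not expect any genuine obstacle here, since this is truly a corollary of the preceding argument; the one point worth stating carefully is that a single pair of McKean inequalities does double duty, supplying the triviality of the normal-operator kernel (hypothesis (1), valid over any ALH filling) and, over $\mathbb H^{n+1}$ itself, the full $L^2$ non-degeneracy needed to close the special case.
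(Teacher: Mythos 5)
Your proposal is correct and follows essentially the same route as the paper: the corollary is precisely a ``corollary of the proof'' of Theorem \ref{grah:lee:fred:g}, obtained by specializing to $\mu=n$ (so $I_{\prescript{g}{}{\Delta}_{(n)}}=(0,n)$), feeding the McKean-type estimates into hypothesis (1) of Theorem \ref{th:usef:geo}, and letting the non-degeneracy assumption supply hypothesis (2). Your observation that on $\mathbb H^{n+1}$ the operator coincides with its normal operator, so the same estimates yield non-degeneracy of hyperbolic space, is exactly the paper's intended justification of the final assertion.
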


		\subsection{The Hodge Laplacian twisted by a connection}\label{hod:twist}
		
		We first look at the mapping properties of the {\em untwisted} Hodge Laplacian
		\[
		\prescript{g}{}{\Delta}^1:\mathcal A^1(M)\to\mathcal A^1(M),
		\] 
		which is obtained from (\ref{hodge:conn}) by taking $P=M\times\mathbb S^1$ endowed with the canonical flat connection. Notice that $\prescript{g}{}{\Delta}^1$ is geometric and formally self-adjoint. As usual, we first determine a non-indicial interval for $\prescript{g}{}{\Delta}^1$. Near infinity we decompose 
		\[
		\Lambda^1 T^*M=\Lambda^1T^*X\oplus \langle dx\rangle
		\]  
		and, with respect to this decomposition,
		\begin{equation}\label{ind:delta1}
			\prescript{g}{}{\Delta}^1=
			\left(
			\begin{array}{c}
				-(x\partial_x)^2+nx\partial_x-(n-1) \\
				-(x\partial_x)^2+nx\partial_x
			\end{array}
			\right)+o(1),
		\end{equation}
		so we may take  $I_{\prescript{g}{}{\Delta}^1}=(1,n-1)$. 
		Since $H^1(\overline{\mathbb H^{n+1}},\mathbb S^n)=\{0\}$, it follows from Theorem \ref{mazz:top} that 
		$\mathcal N(\prescript{g}{}{\Delta}^1)$ is invertible and we conclude more generally the following useful result.
		
		\begin{theorem}\label{usula:inv}\cite{usula2021yang}
			If $(M^{n+1},g)$ is ALH with $n\geq 3$ then 
			\begin{equation}\label{usula:inv:t}
				\prescript{g}{}{\Delta}^1_\omega:x^\delta C^{k,\alpha}_0(\Lambda^1\prescript{0}{}{T^*M}\otimes \mathfrak l)\to x^\delta C^{k,\alpha}_0(\Lambda^1\prescript{0}{}{T^*M}\otimes \mathfrak l), \quad \delta\in(1,n-1),
			\end{equation}
			the Hodge Laplacian twisted by a genuine ``$0$-connection'' $\omega$ on a principal bundle $\overline P\to\overline M$, is Fredholm.
		\end{theorem}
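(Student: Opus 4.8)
The plan is to deduce Theorem \ref{usula:inv} from the general Fredholmness criterion in Theorem \ref{norm:cond:t}, which requires verifying three things: that $\prescript{g}{}{\Delta}^1_\omega$ is a formally self-adjoint generalized Laplacian in $\mathrm{Diff}_0^2$, that it admits a nonempty non-indicial interval, and that its normal operator is invertible on the weighted $0$-Sobolev spaces for $\delta$ in that interval. Formal self-adjointness follows immediately from the definition (\ref{hodge:conn}) of the twisted Hodge Laplacian together with Lemma \ref{adj:d}, and the fact that it is a generalized Laplacian (elliptic, with the metric symbol) was already recorded in the text. So the real content lies in the indicial and normal-operator analysis.

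The key step is to exploit the convention from Remark \ref{rest:ext} that the ``$0$-connection'' $\omega$ extends to a genuine connection on the bundle $\overline P$ over the compactification $\overline M$. The crucial consequence is that as $x\to 0$ the connection coefficients, measured against the $0$-coframe $dx/x,\,dy_i/x$, pick up factors of $x$ and therefore \emph{decay}: near infinity the twisting terms in $d_\omega$ and $\prescript{g}{}{d}^*_\omega$ differ from their flat ($\mathbb S^1$) counterparts only by $o(1)$ terms. First I would make this precise by writing $\prescript{g}{}{\Delta}^1_\omega = \prescript{g}{}{\Delta}^1\otimes\mathrm{Id}_{\mathfrak l} + (\text{lower order in } x)$, so that the indicial family and the normal operator of $\prescript{g}{}{\Delta}^1_\omega$ coincide, fibrewise over $\mathfrak l$, with those of the untwisted operator $\prescript{g}{}{\Delta}^1$. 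Consequently the computation (\ref{ind:delta1}) applies verbatim, giving the same non-indicial interval $I_{\prescript{g}{}{\Delta}^1_\omega}=(1,n-1)$, which is nonempty precisely because $n\geq 3$.

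For the normal operator I would invoke the last clause of Theorem \ref{th:usef:geo}: although $\prescript{g}{}{\Delta}^1_\omega$ is not geometric (it depends on $\omega$), its normal operators are identified, by the decay argument above, with the normal operator of the geometric untwisted Hodge Laplacian $\prescript{g}{}{\Delta}^1$ tensored with the identity on the fibre $\mathfrak l$. By Proposition \ref{geom:lap:norm} this model is $\prescript{g_{\mathbb H}}{}{\Delta}^1$ acting on $1$-forms on $\mathbb H^{n+1}$, whose $L^2$-kernel is $H^1(\overline{\mathbb H}^{n+1},\mathbb S^n)=\{0\}$ by Theorem \ref{mazz:top} (valid since $1<n/2$ for $n\geq 3$). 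Tensoring with the fixed vector space $\mathfrak l$ does not create kernel, so condition (1) of Theorem \ref{th:usef:geo} holds for the model, and the hypotheses of the final clause are met; hence (\ref{norm:cond}) is invertible for $\delta\in(1,n-1)$ and Theorem \ref{norm:cond:t} yields Fredholmness of (\ref{usula:inv:t}).

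The main obstacle I anticipate is the justification of the decay claim, i.e.\ rigorously showing that the $\omega$-dependent terms are genuinely subleading in the $0$-expansion (\ref{gen:lap:exp}) and therefore do not perturb either the indicial roots or the normal operator. This is where the convention of Remark \ref{rest:ext} must be used carefully: one has to check that the connection $1$-form, Christoffel-type corrections, and the curvature term $a^\sharp\iprod\Omega_\omega$-type contributions all enter with at least one extra power of $x$ relative to the principal $0$-part, so that they land in the $o(1)$ remainder. Everything else is a direct assembly of results already established in the excerpt; the dimensional restriction $n\geq 3$ is needed twice, to keep $(1,n-1)$ nonempty and to place the relevant degree below $n/2$ in Theorem \ref{mazz:top}.
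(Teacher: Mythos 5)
Your proposal is correct and follows essentially the same route as the paper: the paper likewise reduces everything to the untwisted computation (\ref{ind:delta1}) and the vanishing $H^1(\overline{\mathbb H}^{n+1},\mathbb S^n)=\{0\}$ via Theorem \ref{mazz:top}, identifying $\mathcal N(\prescript{g}{}{\Delta}^1_\omega)$ with copies of $\mathcal N(\prescript{g}{}{\Delta}^1)$ through the extension convention of Remark \ref{rest:ext}, and then applies Theorems \ref{th:usef:geo} and \ref{norm:cond:t}, noting the same twofold role of $n\geq 3$. The only difference is that the ``decay of the connection coefficients'' step you flag as the main obstacle is not argued in the paper but outsourced to the proof of Lemma 21 of \cite{usula2021yang}, so your sketch of that mechanism is a fleshed-out version of the same argument rather than a departure from it.
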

		
		\begin{proof}
			As explained in the proof of \cite[Lemma 21]{usula2021yang}, and comparing with Remark \ref{rest:ext}, where it is explained what it is meant by $\omega$ being genuine, $\mathcal N(\prescript{g}{}{\Delta}^1_\omega)$ is the direct sum of $(\dim\mathfrak l)^2$ copies of $\mathcal N(\prescript{g}{}{\Delta}_1)$, so Theorem \ref{th:usef:geo} applies. The assumption $n\geq 3$ is needed here not only to use Theorem \ref{mazz:top} (with $k=1$) but also to ensure that $n/2\in I_{\prescript{g}{}{\Delta}^1}=(1,n-1)$.
		\end{proof}	
		
		We have seen in the discussion preceding Theorem \ref{th:usef:geo} that the scalar Laplacian $\prescript{g}{}{\Delta}=\prescript{g}{}{\Delta}^0$ is always invertible for $\delta\in(0,n)$. It turns out that the same reasoning as in the proof of Theorem \ref{usula:inv}, based on Remark \ref{rest:ext}, when combined with a simple application of the maximum principle, yields the same conclusion in the twisted case.

		\begin{theorem}\cite{usula2021yang}\label{lap:twist}
			If $(M,g)$ is ALH and $\omega$ is as in Theirem \ref{usula:inv} then 
			\[
			\prescript{g}{}{\Delta}^0_\omega=	\prescript{g}{}{d}^*_\omega d_\omega :x^\delta C^{k,\alpha}_0(\prescript{0}{}{T^*M}\otimes \mathfrak l)\to x^\delta C^{k,\alpha}_0(\prescript{0}{}{T^*M}\otimes \mathfrak l), \quad \delta\in(0,n),
			\] 
			is invertible.
			In particular, 
			\[
			\prescript{g}{}{d}^*_\omega:x^\delta C^{k,\alpha}_0(\Lambda^1\prescript{0}{}{T^*M}\otimes \mathfrak l)\to x^\delta C^{k,\alpha}_0(\prescript{0}{}{T^*M}\otimes \mathfrak l), \quad \delta\in(0,n),
			\]
			is surjective. 
		\end{theorem}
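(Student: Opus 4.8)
The plan is to read off invertibility from the ``moreover'' clause of Theorem \ref{th:usef:geo}, running the same argument used for Theorem \ref{usula:inv} but one degree lower. First note that on $0$-forms $\prescript{g}{}{d}^*_\omega$ vanishes, so the full twisted Hodge Laplacian reduces to $\prescript{g}{}{\Delta}^0_\omega=\prescript{g}{}{d}^*_\omega d_\omega$, a formally self-adjoint generalized Laplacian on $\prescript{0}{}{T^*M}\otimes\mathfrak l$ which is not geometric because of its dependence on $\omega$. The decisive point is Remark \ref{rest:ext}: as $x\to 0$ the covariant derivative $d_\omega$ becomes increasingly geometric, so that when $\prescript{g}{}{\Delta}^0_\omega$ is written in the form (\ref{gen:lap:exp}) the connection coefficients of $\omega$ enter only through terms carrying a positive power of $x$ and therefore drop out upon freezing coefficients at the boundary in (\ref{norm:rep:def}). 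Consequently $\mathcal N_{(x_0,y_0)}(\prescript{g}{}{\Delta}^0_\omega)$ is the direct sum of $\dim\mathfrak l$ copies of $\mathcal N(\prescript{g}{}{\Delta}^0)=\prescript{g_{\mathbb H}}{}{\Delta}$ on $\mathbb H^{n+1}$; condition (1) of Theorem \ref{th:usef:geo} then holds by the McKean estimate (\ref{mckean:funct}), and (\ref{lap:ind}) yields the non-indicial interval $I_{\prescript{g}{}{\Delta}^0_\omega}=(0,n)$.

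It remains to verify condition (2), that $\ker\prescript{g}{}{\Delta}^0_\omega\big|_{L^2}=\{0\}$, and this is where the maximum principle enters. Suppose $u\in x^\delta H^k_0(\prescript{0}{}{T^*M}\otimes\mathfrak l)$, $\delta\in(0,n)$, satisfies $\prescript{g}{}{d}^*_\omega d_\omega u=0$. A bit of $0$-regularity theory, exactly as in the discussion of the scalar Laplacian preceding Theorem \ref{mazz:top}, upgrades $u$ to $x^{n/2}H^0_0=L^2(M;d{\rm vol}_g)$, and the embedding theorems of Remark \ref{emb:theo} force $|u|\to 0$ at the conformal boundary. Since $d_\omega$ is a metric connection, the Bochner identity gives
\[
\tfrac12\,\prescript{g}{}{\Delta}\,|u|^2=\langle\prescript{g}{}{d}^*_\omega d_\omega u,u\rangle-|d_\omega u|^2=-|d_\omega u|^2\le 0,
\]
so (our sign convention (\ref{symb:ell}) making $\prescript{g}{}{\Delta}$ nonnegative) $|u|^2$ is a nonnegative subharmonic function vanishing at infinity. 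By the maximum principle $|u|^2\equiv 0$, i.e. $u=0$; equivalently one integrates by parts to get $\int_M|d_\omega u|^2\,d{\rm vol}_g=0$, whence $u$ is $d_\omega$-parallel, $|u|$ is constant, and $L^2$-integrability over an infinite-volume manifold forces $u=0$. With both conditions of Theorem \ref{th:usef:geo} secured, $\prescript{g}{}{\Delta}^0_\omega$ is invertible for $\delta\in(0,n)$.

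The ``in particular'' is then a one-line factorization: given $f$ in the target of $\prescript{g}{}{d}^*_\omega$, solve $\prescript{g}{}{d}^*_\omega d_\omega v=f$ using the invertibility just proved and set $\eta:=d_\omega v$; since $d_\omega$ is a first-order $0$-differential operator it sends the weighted Hölder space of $v$ boundedly into the corresponding space of $\mathfrak l$-valued $0$-$1$-forms, and $\prescript{g}{}{d}^*_\omega\eta=\prescript{g}{}{d}^*_\omega d_\omega v=f$, proving surjectivity. I expect the only genuinely delicate points to be (i) justifying that the frozen normal operator is truly untwisted, i.e. that no connection term survives in (\ref{norm:rep:def}) — the content of Remark \ref{rest:ext}, to be checked against the local expression (\ref{gen:lap:exp}) — and (ii) promoting $u\in x^\delta H^k_0$ to honest $L^2$ decay so that the maximum principle applies, which rests on the $0$-regularity theory rather than on any new estimate; the Bochner/maximum-principle step itself is then routine.
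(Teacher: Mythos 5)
Your proposal is correct and takes essentially the same route as the paper, whose proof is precisely the one-sentence prescription ``the same reasoning as in the proof of Theorem \ref{usula:inv}, based on Remark \ref{rest:ext}, when combined with a simple application of the maximum principle'': you carry out exactly this, identifying the normal operator with copies of the untwisted scalar Laplacian's, invoking Theorem \ref{th:usef:geo} with the non-indicial interval $(0,n)$, and disposing of the $L^2$ kernel by the Bochner/maximum-principle (equivalently, integration-by-parts) argument. Your one-line factorization for the surjectivity of $\prescript{g}{}{d}^*_\omega$ is likewise the intended argument, which the paper does not even spell out.
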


		We close this section by indicating how the theory above may be used to establish a mapping  result which, together with Theorem \ref{lap:twist}, is crucial in implementing the Bianchi-Coulomb gauge; compare with \cite[Lemma I.1.4]{biquard2000metriques}.
		
		\begin{theorem}\label{bian:surj}
			If $(M,g)$ is $PE$  then the map 
			\begin{equation}\label{bian:inv:eq}
				\prescript{g}{}{\mathcal B}\prescript{g}{}{\delta}^*:x^\delta C_0^{k,\alpha}(\Lambda^1\prescript{0}{}{T^*M})\to x^\delta C_0^{k-2,\alpha}(\Lambda^1\prescript{0}{}{T^*M}), \quad 0<\delta<n,
			\end{equation}
			is invertible. In particular, 
			the Bianchi map 
			\begin{equation}\label{bian:surj:eq}
				\prescript{g}{}{\mathcal B}:x^\delta C_0^{k,\alpha}(S^2\prescript{0}{}{T^*M})\to x^\delta C_0^{k-2,\alpha}(\Lambda^1\prescript{0}{}{T^*M}), \quad 0<\delta<n,
			\end{equation}
			is surjective. Moreover, the same result holds true for any ALH metric $g'$ on $M$ which is sufficiently close to $g$. 
			\begin{proof}
				A well-known computation shows that 
				\begin{equation}\label{ricci:boch}
					2\prescript{g}{}{\mathcal B}\prescript{g}{}{\delta}=\nabla^*\nabla-\prescript{g}{}{\rm Ric},
				\end{equation}
				a geometric, formally self-adjoint operator to which Theorem \ref{th:usef:geo} applies. 
				To check this, use Bochner formula to get 
				\begin{eqnarray*}
					2\prescript{g}{}{\mathcal B}\prescript{g}{}{\delta}
					& = & \prescript{g}{}{\Delta^1}-2\prescript{g}{}{\rm Ric}\\
					& = &  \prescript{g}{}{\Delta^1}+2n +o(1),
				\end{eqnarray*}
				and combining this with (\ref{ind:delta1}) we see that 
				\[
				I_{	2\prescript{g}{}{\mathcal B}\prescript{g}{}{\delta}}=\left(\frac{n}{2}-\frac{1}{2}\sqrt{n^2+8n},\frac{n}{2}+\frac{1}{2}\sqrt{n^2+8n}\right)\supset (0,n). 
				\]
				Taking into account that $\prescript{g}{}{\rm Ric}=-ng<0$ everywhere, the proof is completed for $g$ a PE metric along the lines above by means of the standard vanishing argument applied to (\ref{ricci:boch}). 
				As for the last assertion, note that if $g'$ is close to $g$ then  the $0$-calculus provides a parametrix $\prescript{g'}{}{H}$ for $\prescript{g'}{}{\mathcal B}\prescript{g'}{}{\delta}^*$ in the sense that $\prescript{g'}{}{H}\prescript{g'}{}{\mathcal B}\prescript{g'}{}{\delta}^*=\prescript{g'}{}{\mathcal B}\prescript{g'}{}{\delta}^*\prescript{g'}{}{H}={\rm Id}-\prescript{g'}{}{O}$, where $\prescript{g'}{}{O}$ is the residual term and with everything in sight depending continuously on the metric. Since we already know that $\prescript{g}{}{O}=0$, we may take $\prescript{g'}{}{O}$ small enough so as to make ${\rm Id}-\prescript{g'}{}{O}$ invertible. Thus, $\prescript{g'}{}{\mathcal B}\prescript{g'}{}{\delta}^*$ is invertible as well; compare with \cite[Theorem 44]{biquard2011nonlinear}. 
			\end{proof}
		\end{theorem}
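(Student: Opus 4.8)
The plan is to recognize the second-order operator $\prescript{g}{}{\mathcal B}\prescript{g}{}{\delta}^*$ as a generalized Laplacian and then run the invertibility machinery of Theorem \ref{th:usef:geo}. The starting point is the Weitzenb\"ock-type identity (\ref{ricci:boch}), $2\prescript{g}{}{\mathcal B}\prescript{g}{}{\delta}^*=\nabla^*\nabla-\prescript{g}{}{\rm Ric}$, which exhibits $2\prescript{g}{}{\mathcal B}\prescript{g}{}{\delta}^*$ as a Bochner Laplacian perturbed by the symmetric curvature endomorphism $-\prescript{g}{}{\rm Ric}$. In particular it is geometric in the sense of Definition \ref{geom:op} and formally self-adjoint, so that Theorems \ref{norm:cond:t} and \ref{th:usef:geo} apply. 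The first thing I would do is verify this identity by a direct computation from the definitions of $\prescript{g}{}{\mathcal B}$ and $\prescript{g}{}{\delta}^*$.

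Next I would locate a non-indicial interval containing $(0,n)$. Using the Bochner formula $\prescript{g}{}{\Delta}^1=\nabla^*\nabla+\prescript{g}{}{\rm Ric}$ together with the ALH asymptotics $\prescript{g}{}{\rm Ric}=-ng+o(1)$, one rewrites $2\prescript{g}{}{\mathcal B}\prescript{g}{}{\delta}^*=\prescript{g}{}{\Delta}^1+2n+o(1)$, the extra $+2n$ acting as a multiple of the identity on $1$-forms. Feeding the indicial data (\ref{ind:delta1}) of $\prescript{g}{}{\Delta}^1$ into this expression displaces every indicial root outward from the line ${\rm Re}\,\zeta=n/2$, and a short check confirms that the resulting non-indicial interval still properly contains $(0,n)$. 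This discharges the Fredholm hypothesis of Theorem \ref{norm:cond:t} on the desired weight range.

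To upgrade Fredholmness to invertibility on $(0,n)$ I would verify the two vanishing conditions of Theorem \ref{th:usef:geo}. Since the operator is geometric, Proposition \ref{geom:lap:norm} identifies its normal operator with the same expression on $(\mathbb H^{n+1},g_{\mathbb H})$, where $\prescript{g_{\mathbb H}}{}{\rm Ric}=-ng_{\mathbb H}$ exactly; the integration-by-parts bound $\langle(\nabla^*\nabla-\prescript{g}{}{\rm Ric})\eta,\eta\rangle=\|\nabla\eta\|^2+n\|\eta\|^2$ then forces the $L^2$ kernel to vanish there, giving condition $(1)$. On $(M,g)$ itself the PE identity $\prescript{g}{}{\rm Ric}=-ng$ makes the very same pointwise positivity argument work everywhere, killing $\ker\big|_{L^2}$ and yielding condition $(2)$. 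Hence (\ref{bian:inv:eq}) is invertible for $0<\delta<n$, and surjectivity of the Bianchi map (\ref{bian:surj:eq}) follows immediately: given any target $\eta$, solve $\prescript{g}{}{\mathcal B}\prescript{g}{}{\delta}^*v=\eta$ and take $\prescript{g}{}{\delta}^*v\in\mathcal S^2(M)$ as a preimage.

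The last assertion, stability under perturbation to a nearby ALH metric $g'$, is the delicate point, and it is precisely here that the interior positivity argument must be abandoned: for $g'$ merely ALH the Ricci curvature need not be negative in the interior, so one cannot deduce triviality of the $L^2$ kernel by the vanishing argument above. Instead I would appeal to the continuity, within the $0$-calculus, of the parametrix construction, which produces $\prescript{g'}{}{H}$ with $\prescript{g'}{}{H}\,\prescript{g'}{}{\mathcal B}\prescript{g'}{}{\delta}^*={\rm Id}-\prescript{g'}{}{O}$, the residual term $\prescript{g'}{}{O}$ depending continuously on $g'$. Since $\prescript{g}{}{\mathcal B}\prescript{g}{}{\delta}^*$ is already invertible we may arrange $\prescript{g}{}{O}=0$, so for $g'$ sufficiently close to $g$ the operator ${\rm Id}-\prescript{g'}{}{O}$ is a small perturbation of the identity, hence invertible, forcing invertibility of $\prescript{g'}{}{\mathcal B}\prescript{g'}{}{\delta}^*$. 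The hard part will be establishing this continuous dependence with enough uniformity in the weighted H\"older scale; this is the substantive analytic input, and it is exactly the place where the robustness of the $0$-calculus, rather than the interior vanishing argument, is indispensable.
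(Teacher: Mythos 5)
Your proposal follows essentially the same route as the paper's own proof: the Weitzenb\"ock identity (\ref{ricci:boch}), the Bochner formula plus the asymptotics $\prescript{g}{}{\rm Ric}=-ng+o(1)$ to locate a non-indicial interval containing $(0,n)$, the vanishing argument under the PE condition to verify both kernel conditions of Theorem \ref{th:usef:geo}, and the parametrix/continuity argument of the $0$-calculus for the nearby-metric assertion. The only differences are cosmetic — you spell out the integration-by-parts positivity and the deduction of surjectivity of $\prescript{g}{}{\mathcal B}$ from invertibility of $\prescript{g}{}{\mathcal B}\prescript{g}{}{\delta}^*$ explicitly, and you prudently avoid committing to the paper's explicit (and in fact slightly too large) formula for the non-indicial interval, claiming only that it contains $(0,n)$, which is all that is needed.
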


		\section{Gauge fixing}\label{fix:gauge}

		As already mentioned, the gauge action in the configuration space $\mathcal M_M\times\mathcal C_P$ leaves invariant the EYM action and hence prevents the EYM field equations (\ref{eym:field:rew}) from being elliptic. The standard way to overcome  this difficulty is to introduce a gauge slice along which ellipticity is restored. Here we accomplish this by means of the so-called {\em Bianchi-Coulomb gauge}, which combines ideas and procedures from \cite{biquard2000metriques,mazzeo2006maskit,usula2021yang}. 
		
		In the following we take $k\geq 3$ and $1<\delta<2$. 
		Also, if we fix a background ALH manifold $(M^{n+1},g_0)$, $n\geq 3$, endowed with  a special defining function $x$ in a collar neighborhood $\mathcal U$ of its conformal boundary $(Y,\partial_\infty g_0)$.
		Given a smooth configuration $(g',\omega')$ on $Y=\partial \overline M$, where $\omega'$ is assumed to be genuine as in Remark \ref{rest:ext}, we define an affine Banach manifold of configurations $\mathscr F^{(k),\alpha}$ restricted to $Y$ by  
		\[
		\mathscr F^{(k),\alpha}=\left(g'+C^{k+1,\alpha}(S^2T^*Y),\omega'+C^k(T^*Y\otimes {\rm Ad}\,{\mathfrak l})\right). 
		\]
		Also, we note the existence of {\em bounded} extension maps
		\[
		e:C^{k+1,\alpha}(S^2T^*Y)\to C^{k+1,\alpha}(S^2T^*\overline{M})
		\]
		and 
		\[
		\mathfrak e:C^k(T^*Y\otimes {\rm Ad}\,{\mathfrak l})\to C^k(T^*{\overline M}\otimes {\rm Ad}\, {\mathfrak l}), 
		\]
		which are essentially defined by composing parallel translation of the original object along the flow lines of $\prescript{x^2g_0}{}{\nabla}x$ with multiplication by a cutoff function that equals $1$ along $Y$ and has support contained in $\mathcal U$; see \cite{graham1991einstein,lee2006fredholm,usula2021yang} for the detailed constructions of these extensions.		 
		We use this to define a Banach manifold $\mathscr I_\delta^{(k),\alpha}$ whose elements restrict to elements of $\mathscr F^{(k),\alpha}$ along $Y$. Precisely, given a background configuration $(g_0,\omega_0)$ as above, we declare that $(g,\omega)\in \mathscr I_\delta^{(k),\alpha}$ if it takes the form
		\begin{equation}\label{decomp:data}
			(g,\omega)=(g_0,\omega_0)+e_x(\Gamma),\mathfrak e(\gamma))+(A,a), \quad e_x(\Gamma)=x^{-2}e(\Gamma),
		\end{equation}
		where 
		\[
		(\Gamma,\gamma)\in C^{k+1,\alpha}(S^2T^* Y)\times C^{k,\alpha}(T^*Y\otimes {{\rm Ad}\, \mathfrak l})
		\]
		and 
		\[(A,a)\in x^\delta C^{k+1,\alpha}_0(S^2T^*\overline M)\times x^\delta C^{k,\alpha}_0
		(T^*{\overline M}\otimes \overline{{{\rm Ad}\,\mathfrak l}}).
		\]
		The embedding theorems mentioned in Remark \ref{emb:theo} easily imply that this is a Banach manifold modeled on 
		\[
		\mathscr B:=C^{k+1,\alpha}(S^2T^* Y)\times  x^\delta C^{k+1,\alpha}_0(S^2T^*\overline M)\times C^{k,\alpha}(T^*Y\otimes{{\rm Ad}\, \mathfrak l})\times  x^\delta C^{k,\alpha}_0
		(T^*{\overline M}\otimes\overline{{{\rm Ad}\,\mathfrak l}}).
		\]
		Moreover, 
		\begin{equation}\label{bd:rest}
			\partial_\infty g=[x^2g_0|_Y+\Gamma],\quad 
			\omega\big|_Y=\omega_0\big|_Y+\gamma. 
		\end{equation}
		In particular, $(M,g)$ is  ALH. 
		
		Finally, we define a Banach Lie group $\mathscr G^{(k),\alpha}_\delta$ of gauge transformations formed by elements of $\mathscr G={\rm Diff}\,\overline M\times \mathpzc L_P$ which act on $\mathscr I_\delta^{(k),\alpha}$ commuting with the operation of passing to the boundary data as in (\ref{bd:rest}). It is formed by those continuous elements of $\mathscr G$ contained in 
		\[
		{\mathsf{Exp}}\left(x^\delta C_0^{k+2,\alpha}(T\overline M)\right)\times \left({\rm Id}+ x^\delta  C_0^{k+1}(T^*\overline M\otimes\overline{{\rm Ad}\,\mathfrak l})\right).
		\]
		Again using the embedding theorems, it is easy to check that  $\mathscr G^{(k),\alpha}_\delta$ has all the properties mentioned above.
		
		We are now ready to fix the gauge by means of the Bianchi-Coulomb recipe. We define $\mathscr S_{(g_0,\omega_0)}$ to be space of all configurations $(g,\omega)$ as in (\ref{decomp:data}) with $(g,\omega)\in \mathscr I_\delta^{(k),\alpha}$ and
		satisfying the {\em Bianchi-Coulomb gauge}
		\[
		\left\{
		\begin{array}{l}
			\prescript{g_0+e_x(\Gamma)}{}{\mathcal B}A=0 \\
			\prescript{g_0+e_x(\Gamma)}{}{d^*_{\omega_0+\mathfrak e(\gamma)}}a=0
		\end{array}
		\right.
		\]
		
		\begin{proposition}\label{slice:man}
			If $(M,g_0)$ is PE then, near $(g_0,\omega_0)$, $\mathscr S_{(g_0,\omega_0)}$ is a smooth Banach submanifold of $\mathscr I_\delta^{(k),\alpha}$ with 
			\begin{eqnarray*}
				T_{(g_0,\omega_0)}\mathscr S_{(g_0,\omega_0)}
				& = & 
				C^{k+1,\alpha}(S^2T^* Y)\times {\ker} \prescript{g_0}{}{\mathcal B}\big|_{x^\delta C^{k+1,\alpha}_0(S^2T^*M)}\times \\
				& & 	\quad \times C^{k,\alpha}(T^*Y\otimes{{\rm Ad}\, \mathfrak l})\times  {\ker}\prescript{g_0}{}{d}^*_\omega\big|_{x^\delta C^{k,\alpha}_0
					(T^*{\overline M}\otimes\overline{{{\rm Ad}\,\mathfrak l}})}.
			\end{eqnarray*}
		\end{proposition}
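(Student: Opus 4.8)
The plan is to realize $\mathscr S_{(g_0,\omega_0)}$ as the zero set of a smooth gauge map and then to apply the submersion theorem for Banach manifolds. Writing a generic element of $\mathscr I_\delta^{(k),\alpha}$ in the form (\ref{decomp:data}), I would define
\[
\Phi:\mathscr I_\delta^{(k),\alpha}\longrightarrow x^\delta C_0^{k,\alpha}(\Lambda^1\prescript{0}{}{T^*M})\times x^\delta C_0^{k-1,\alpha}(\prescript{0}{}{T^*M}\otimes\overline{{{\rm Ad}\,\mathfrak l}})
\]
by
\[
\Phi(g,\omega)=\left(\prescript{g_0+e_x(\Gamma)}{}{\mathcal B}A,\ \prescript{g_0+e_x(\Gamma)}{}{d}^*_{\omega_0+\mathfrak e(\gamma)}a\right),
\]
so that $\mathscr S_{(g_0,\omega_0)}=\Phi^{-1}(0)$ is exactly the Bianchi-Coulomb gauge locus. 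The first task is to check that $\Phi$ is smooth near $(g_0,\omega_0)$. This reduces to the fact that the coefficients of $\prescript{g_0+e_x(\Gamma)}{}{\mathcal B}$ depend smoothly (rationally, through the inverse metric and the Christoffel symbols) on $\Gamma$ for $\Gamma$ small enough that $g_0+e_x(\Gamma)$ is still a metric, and that $\prescript{g_0+e_x(\Gamma)}{}{d}^*_{\omega_0+\mathfrak e(\gamma)}$ depends smoothly on $(\Gamma,\gamma)$ through the Hodge star of $g_0+e_x(\Gamma)$ and the affine dependence of $d_{\omega_0+\mathfrak e(\gamma)}$ on $\gamma$. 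Since $e_x$ and $\mathfrak e$ are bounded and linear and multiplication is bounded multilinear on the weighted $0$-Hölder scale (Remark \ref{emb:theo}), the composite $\Phi$ is smooth into the stated target; both components lose exactly one derivative, consistently with $\prescript{g}{}{\mathcal B}$ and $\prescript{g}{}{d}^*_\omega$ being first order.

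The conceptual heart of the argument is the computation of $D\Phi_{(g_0,\omega_0)}$. At the base configuration one has $\Gamma=\gamma=0$ and, crucially, $A=0$ and $a=0$. Differentiating $\Phi$ in a direction $(\dot\Gamma,\dot A,\dot\gamma,\dot a)$, every term arising from varying the coefficients of the two operators carries a factor of $A$ or of $a$ and hence vanishes at the base point. The differential therefore decouples and is independent of $(\dot\Gamma,\dot\gamma)$:
\[
D\Phi_{(g_0,\omega_0)}(\dot\Gamma,\dot A,\dot\gamma,\dot a)=\left(\prescript{g_0}{}{\mathcal B}\dot A,\ \prescript{g_0}{}{d}^*_{\omega_0}\dot a\right).
\]
It remains to see that this bounded linear map is a split surjection. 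The PE hypothesis enters here: invertibility of $\prescript{g_0}{}{\mathcal B}\prescript{g_0}{}{\delta}^*$ (Theorem \ref{bian:surj}) furnishes the bounded right inverse $\prescript{g_0}{}{\delta}^*(\prescript{g_0}{}{\mathcal B}\prescript{g_0}{}{\delta}^*)^{-1}$ of $\prescript{g_0}{}{\mathcal B}$ on the relevant weighted $0$-Hölder spaces, while invertibility of the twisted Laplacian $\prescript{g_0}{}{\Delta}^0_{\omega_0}=\prescript{g_0}{}{d}^*_{\omega_0}d_{\omega_0}$ (Theorem \ref{lap:twist}) furnishes the bounded right inverse $d_{\omega_0}(\prescript{g_0}{}{\Delta}^0_{\omega_0})^{-1}$ of $\prescript{g_0}{}{d}^*_{\omega_0}$. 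A bounded right inverse $R$ makes each factor surjective and exhibits its kernel as the image of the bounded projection ${\rm Id}-R\circ D\Phi_{(g_0,\omega_0)}$, hence topologically split; since the boundary directions $\dot\Gamma,\dot\gamma$ are unconstrained, the product map $D\Phi_{(g_0,\omega_0)}$ is a split surjection.

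Consequently $\Phi$ is a submersion at $(g_0,\omega_0)$, and by the implicit function theorem for Banach manifolds $\mathscr S_{(g_0,\omega_0)}=\Phi^{-1}(0)$ is, near $(g_0,\omega_0)$, a smooth Banach submanifold with tangent space $\ker D\Phi_{(g_0,\omega_0)}$. As no condition is imposed on $\dot\Gamma,\dot\gamma$ and the conditions on $\dot A,\dot a$ are precisely the two kernel equations, this yields
\[
C^{k+1,\alpha}(S^2T^*Y)\times\ker\prescript{g_0}{}{\mathcal B}\big|_{x^\delta C_0^{k+1,\alpha}(S^2T^*M)}\times C^{k,\alpha}(T^*Y\otimes{{\rm Ad}\,\mathfrak l})\times\ker\prescript{g_0}{}{d}^*_{\omega_0}\big|_{x^\delta C_0^{k,\alpha}(T^*\overline M\otimes\overline{{{\rm Ad}\,\mathfrak l}})},
\]
which is the asserted description of $T_{(g_0,\omega_0)}\mathscr S_{(g_0,\omega_0)}$. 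The only genuinely delicate points are the smoothness of $\Phi$ on the weighted $0$-Hölder scale, which rests on the multiplication and mapping estimates of the $0$-calculus rather than on any new idea, and the routine bookkeeping that the right inverses supplied by Theorems \ref{bian:surj} and \ref{lap:twist} land in the correct Hölder degrees; the substantive step is the vanishing of the coefficient-variation terms at the base point, which is what makes $D\Phi_{(g_0,\omega_0)}$ split as the direct sum of the two model operators arranged to be invertible in Section \ref{inv:oper}.
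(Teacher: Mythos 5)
Your proposal is correct and follows essentially the same route as the paper: realize $\mathscr S_{(g_0,\omega_0)}$ as the zero set of the gauge map, observe that the differential at $(g_0,\omega_0)$ decouples to $(\dot\Gamma,\dot A,\dot\gamma,\dot a)\mapsto(\prescript{g_0}{}{\mathcal B}\dot A,\prescript{g_0}{}{d}^*_{\omega_0}\dot a)$ since $A=a=0$ at the base point, invoke Theorems \ref{bian:surj} and \ref{lap:twist} for surjectivity, and conclude by the Implicit Function Theorem. Your write-up supplies details the paper leaves implicit (smoothness of the gauge map, explicit right inverses, and the splitness of the kernel needed in the Banach setting), but the argument is the same.
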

		
		\begin{proof}
			We have that $\mathscr S_{(g_0,\omega_0)}=\Xi^{-1}(0)$, where the smooth map
			\[
			\Xi: \mathscr I_\delta^{(k),\alpha}\to x^\delta C_0^{k,\alpha}(S^2T^*M)\times x^\delta C_0^{k-1,\alpha}(T^*M\big|_{{\rm Ad}\,\mathfrak l})
			\]
			is given by 
			\[ \Xi((g_0,\omega_0)+(e_x(\Gamma),\mathfrak e(\gamma))+(A,a))=(\prescript{g_0+e_x(\Gamma)}{}{\mathcal B}A,\prescript{g_0+e_x(\Gamma)}{}{d^*_{\omega_0+\mathfrak e(\gamma)}}a).
			\]
			Clearly, its differential at $(g_0,\omega_0)$,
			\[
			D_{(g_0,\omega_0)}\Xi:\mathscr B \to  x^\delta C_0^{k,\alpha}(S^2T^*M)\times x^\delta C_0^{k-1,\alpha}(T^*M\big|_{{\rm Ad}\,\mathfrak l})
			\]
			is 
			\[
			\left(D_{(g_0,\omega_0)}
			\Xi\right)(e_x(\Gamma)+A,\mathfrak e(\gamma)+a)= (\prescript{g_0}{}{\mathcal B}A,\prescript{g_0}{}{d^*_{\omega_0}}a).
			\]
			By Theorems \ref{lap:twist} and \ref{bian:surj}, this is surjective and the result follows from the Implicit Function Theorem. 
		\end{proof}
		
		That $\mathscr S_{(g_0,\omega_0)}$ is a {\em bona fide} local slice for the gauge action of $\mathscr G_\delta^{(k),\alpha}$ on the space of configurations  around a suitable $(g_0,\omega_0)$ is confirmed by the next result.
		
		\begin{proposition}\label{gauge:bona}
			If $(M,g_0)$ is PE then there exist neighborhoods
			\begin{itemize}
				\item $\mathscr U_{(g_0,\omega_0)}$ of $(g_0,\omega_0)$ in $\mathscr S_{(g_0,\omega_0)}$;
				\item $\mathscr V_{\rm Id}$ of the identity in $\mathscr G^{(k),\alpha}_\delta$;
				\item $\mathscr W_{(g_0,\omega_0)}$ of $(g_0,\omega_0)$ in $\mathscr I^{(k),\alpha}_\delta$
			\end{itemize}
			such that for any $(g,\omega)\in \mathscr W_{(g_0,\omega_0)}$ there exists a unique $\Phi\in \mathscr V_{{\rm Id}}$ such $(g,\omega)\cdot\Phi\in \mathscr U_{(g_0,\omega_0)}$. 
		\end{proposition}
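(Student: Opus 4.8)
The plan is to deduce the slice property from the Implicit Function Theorem applied to the gauge-fixing map composed with the gauge action. Concretely, I would consider the map
\[
\mathcal F(\Phi,(g,\omega))=\Xi\big((g,\omega)\cdot\Phi\big),
\]
defined for $\Phi$ near ${\rm Id}$ in $\mathscr G^{(k),\alpha}_\delta$ and $(g,\omega)$ near $(g_0,\omega_0)$ in $\mathscr I^{(k),\alpha}_\delta$, with values in $x^\delta C_0^{k,\alpha}(S^2T^*M)\times x^\delta C_0^{k-1,\alpha}(T^*M|_{{\rm Ad}\,\mathfrak l})$, where $\Xi$ is the map from the proof of Proposition \ref{slice:man}. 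Since $(g_0,\omega_0)$ already satisfies the Bianchi-Coulomb gauge (its interior part $(A,a)$ vanishes), we have $\mathcal F({\rm Id},(g_0,\omega_0))=0$, and $\mathscr S_{(g_0,\omega_0)}$ is cut out locally by $\mathcal F(\,\cdot\,,\,\cdot\,)=0$. Solving $\mathcal F(\Phi,(g,\omega))=0$ for $\Phi$ as a function of $(g,\omega)$, with $\Phi(g_0,\omega_0)={\rm Id}$, is exactly the assertion to be proved.

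The decisive step is to show that the partial differential $D_\Phi\mathcal F$ at the base point $({\rm Id},(g_0,\omega_0))$ is a Banach space isomorphism. By the chain rule this is the infinitesimal gauge action followed by $D_{(g_0,\omega_0)}\Xi$. The Lie algebra of $\mathscr G^{(k),\alpha}_\delta$ splits as a pair $(X,\mathfrak a)$, and by (\ref{exp:met}) and (\ref{exp:con}) the infinitesimal action sends it to the interior perturbation $(\mathbb L_X g_0,\,d_{\omega_0}\mathfrak a)$. Feeding this into the formula for $D_{(g_0,\omega_0)}\Xi$ recorded in Proposition \ref{slice:man} and using Lemma \ref{div:adj} to write $\mathbb L_X g_0=2\,\prescript{g_0}{}{\delta}^*X^\flat$, the differential decouples as the block-diagonal operator
\[
(X^\flat,\mathfrak a)\longmapsto\big(2\,\prescript{g_0}{}{\mathcal B}\,\prescript{g_0}{}{\delta}^*X^\flat,\ \prescript{g_0}{}{d}^*_{\omega_0}d_{\omega_0}\mathfrak a\big)=\big(2\,\prescript{g_0}{}{\mathcal B}\,\prescript{g_0}{}{\delta}^*X^\flat,\ \prescript{g_0}{}{\Delta}^0_{\omega_0}\mathfrak a\big).
\]
Since $(M,g_0)$ is PE, the first block $\prescript{g_0}{}{\mathcal B}\,\prescript{g_0}{}{\delta}^*$ is invertible by Theorem \ref{bian:surj}, while the second block $\prescript{g_0}{}{\Delta}^0_{\omega_0}$ is invertible by Theorem \ref{lap:twist}, both in the relevant weight range (in particular for the fixed $1<\delta<2$). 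Hence $D_\Phi\mathcal F$ is an isomorphism; this is precisely where the PE hypothesis and the invertibility results of Section \ref{inv:oper} enter, and it is the analytic heart of the argument, already secured earlier.

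With the isomorphism in hand, the Implicit Function Theorem yields a neighborhood $\mathscr W_{(g_0,\omega_0)}$ of $(g_0,\omega_0)$ in $\mathscr I^{(k),\alpha}_\delta$, a neighborhood $\mathscr V_{\rm Id}$ of ${\rm Id}$ in $\mathscr G^{(k),\alpha}_\delta$, and a smooth map $(g,\omega)\mapsto\Phi(g,\omega)\in\mathscr V_{\rm Id}$, unique in $\mathscr V_{\rm Id}$, such that $\mathcal F(\Phi(g,\omega),(g,\omega))=0$, i.e. $(g,\omega)\cdot\Phi(g,\omega)\in\mathscr S_{(g_0,\omega_0)}$. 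Shrinking if necessary, one takes $\mathscr U_{(g_0,\omega_0)}$ to be a neighborhood of $(g_0,\omega_0)$ in $\mathscr S_{(g_0,\omega_0)}$ containing the image, which produces all three neighborhoods together with the stated uniqueness.

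The main obstacle I anticipate is not this final linear step but the verification that $\mathcal F$ is genuinely $C^1$ (indeed smooth) as a map of Banach manifolds in the weighted H\"older scale: the diffeomorphism part of the action loses one derivative, which is exactly why $\mathscr G^{(k),\alpha}_\delta$ is modeled on vector fields of regularity $k+2$ and gauge parameters of regularity $k+1$ while the configurations carry regularity $k+1$ and $k$. One must also check that elements of $\mathscr G^{(k),\alpha}_\delta$, being asymptotic to the identity at rate $x^\delta$ with $\delta>0$, preserve both the conformal boundary data (\ref{bd:rest}) and the decomposition (\ref{decomp:data}), so that $(g,\omega)\cdot\Phi$ remains in $\mathscr I^{(k),\alpha}_\delta$ and $\mathcal F$ takes values in the asserted target. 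These are routine but delicate bookkeeping points, to be handled following \cite{biquard2000metriques,mazzeo2006maskit,usula2021yang}.
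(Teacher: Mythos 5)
Your proposal is correct, and it rests on exactly the same analytic pillars as the paper's proof --- the invertibility of $\prescript{g_0}{}{\mathcal B}\prescript{g_0}{}{\delta}^*$ (Theorem \ref{bian:surj}) and of $\prescript{g_0}{}{\Delta}^0_{\omega_0}$ (Theorem \ref{lap:twist}), which is where the PE hypothesis enters --- but you organize the soft part differently. The paper applies the \emph{Inverse} Function Theorem to the restricted action map $\mathscr C:\mathscr S_{(g_0,\omega_0)}\times\mathscr G^{(k),\alpha}_\delta\to\mathscr I^{(k),\alpha}_\delta$, $(s,\Phi)\mapsto s\cdot\Phi$, whose differential at the base point is the triangular system $((A,U),(a,u))\mapsto\left(2\prescript{g_0}{}{\delta}^*U+A,\ d_{\omega_0}u+a\right)$ on $T\mathscr S\times T\mathscr G$; its invertibility must then be checked by hand (injectivity: $\prescript{g_0}{}{\mathcal B}A=0$ and $2\prescript{g_0}{}{\delta}^*U+A=0$ force $\prescript{g_0}{}{\mathcal B}\prescript{g_0}{}{\delta}^*U=0$, hence $U=0$ and $A=0$; surjectivity: solve $2\prescript{g_0}{}{\mathcal B}\prescript{g_0}{}{\delta}^*U=\prescript{g_0}{}{\mathcal B}B$ and correct by an element of $\ker\prescript{g_0}{}{\mathcal B}$). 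You instead apply the \emph{Implicit} Function Theorem to $\mathcal F(\Phi,(g,\omega))=\Xi\bigl((g,\omega)\cdot\Phi\bigr)$ and solve for $\Phi$; the partial differential $D_\Phi\mathcal F$ is then the block-diagonal operator $(X^\flat,\mathfrak a)\mapsto\bigl(2\prescript{g_0}{}{\mathcal B}\prescript{g_0}{}{\delta}^*X^\flat,\ \prescript{g_0}{}{\Delta}^0_{\omega_0}\mathfrak a\bigr)$, whose invertibility is literally the content of the two cited theorems, with no injectivity/surjectivity bookkeeping and with the regularity shifts ($k+2$, $k+1$ for the group; $k$, $k-1$ for the target of $\Xi$) matching exactly. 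The two formulations are equivalent --- the paper's yields the local product decomposition $(g,\omega)=s\cdot\Psi$ and then takes $\Phi=\Psi^{-1}$, while yours produces the retraction onto the slice directly --- and yours arguably isolates the role of non-degeneracy of the gauge Laplacians more transparently; you also correctly note that uniqueness of $\Phi$ in $\mathscr V_{\rm Id}$ follows because any $\Phi$ with $(g,\omega)\cdot\Phi\in\mathscr U\subset\Xi^{-1}(0)$ solves $\mathcal F=0$. The smoothness-of-the-action caveat you flag at the end applies equally to the paper's map $\mathscr C$ (the paper passes over it in silence), and is indeed absorbed by the shifted regularities built into the definition of $\mathscr G^{(k),\alpha}_\delta$.
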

		
		\begin{proof}
			Restrict the gauge action to obtain a map 
			\[
			\mathscr C:\mathscr S_{(g_0,\omega_0)}\times \mathscr G^{(k),\alpha}_\delta\to \mathscr I^{(k),\alpha}_\delta,
			\]
			so that
			\[
			\left(D_{((g_0,\omega_0),{\rm Id})}\mathscr C\right)((A,U),(a,u))=\left((\Gamma,2\prescript{g_0}{}{\delta}^*U+A),(\gamma,d_{\omega_0} u+a)\right),
			\]
			which maps
			\[
			T_{(g_0,\omega_0)}\mathscr S_{(g_0,\omega_0)}\times T_{{\rm Id}}\mathscr G^{(k),\alpha}_\delta \to 
			x^\delta C^{k+1,\alpha}_0(S^2T^*M)\times   x^\delta C^{k,\alpha}_0
			(T^*{\overline M}\otimes\overline{{{\rm Ad}\,\mathfrak l}}).
			\]
			Here we used (\ref{exp:con}), (\ref{exp:met}) and Lemma \ref{div:adj}. 
			We will check that this derivative map is invertible, which completes the proof by the Inverse Function Theorem. Let us first  see what happens with the ``left side'' of the map, which only involves metric information. To check injectivity, assume that $2\prescript{g_0}{}{\delta}^*U+A=0$. From the description of $	T_{(g_0,\omega_0)}\mathscr S_{(g_0,\omega_0)}$ in Proposition \ref{slice:man} we have $\prescript{g_0}{}{\mathcal B}A=0$, which gives $\prescript{g_0}{}{\mathcal B}\prescript{g_0}{}{\delta}^*U=0$. Thus, $U=0$ by Theorem \ref{bian:surj} and hence $A=0$ as well. As for the surjectivity, if $B\in  x^\delta C_0^{k-2,\alpha}(S^2\prescript{0}{}{T^*M})$ is  given we must find $(A,U)$ in the domain which is mapped onto $B$. First solve $2\prescript{g_0}{}{\mathcal B}\prescript{g_0}{}{\delta}^*U=\prescript{g_0}{}{\mathcal B}B$ for $U\in x^\delta C_0^{k,\alpha}(\Lambda^1\prescript{0}{}{T^*M})$ by means of Theorem \ref{bian:surj}. Now set $A:=2\prescript{g_0}{}{\delta}^*U-B$,
			which satisfies $\prescript{g_0}{}{\mathcal B}A=0$, as desired. As for the ``right'' side of the map involving the connection, the proof is entirely similar, uses Theorem \ref{lap:twist} and already appears in \cite[Proposition 33]{usula2021yang}.
		\end{proof}
		
		\section{The main result and its proof}\label{main:res}
		
		Here we retain the terminology, notation and requirements of the previous section. In particular, we take $n\geq 3$, $k\geq 3$ and $1<\delta<2$. We now single out the class of EYM fields to which the perturbative scheme will be applied.
		
		\begin{definition}\label{nondeg:conf}
			We say that a EYM field $(g_0,\omega_0)$ is {\em trivial} if $g_0$ is PE and $\omega_0$ is flat ($\Omega_{\omega_0}=0$). Moreover, a trivial $(g_0,\omega_0)$ is {\em non-degenerate} if 
			\begin{itemize}
				\item $g_0$ is non-degenerate as  a PE metric in the sense that
				$\ker \prescript{g_0}{}{\Delta}_{(n)}\big|_{L^2}$ is trivial;
				\item $\omega_0$ is non-degenerate as a flat connection in the sense that $\ker \prescript{g_0}{}{\Delta}^1_{\omega_0}\big|_{L^2}$ is trivial. 
			\end{itemize}
		\end{definition}
		
		\begin{remark}\label{non:flat}
			The second item above corresponds to a special case of the more general definition in \cite{usula2021yang}, where $\omega_0$ is not necessarily flat and $\prescript{g_0}{}{\Delta}_{\omega_0}^1$  gets replaced by $ \prescript{g_0}{}{\Delta}_{\omega_0}^1+\cdot^\sharp \iprod\Omega_{\omega_0}$. We note, however, that this notion is only effectively used in \cite{usula2021yang} in its Corollary 39, which requires flatness (indeed, triviality). As already observed there, this relates to the difficulty of finding {\em non-trivial} non-degenerate Yang-Mills connections to which the perturbative scheme may be applied. A similar difficulty also occurs here, which justifies Definition \ref{nondeg:conf}. 
		\end{remark}
		
		We may now state our main result. 
		
		\begin{theorem}\label{main}  If $(g_0,\omega_0)$ is a non-degenerate trivial EYM as above (with $n\geq 3$) then for any  sufficiently small pair $(\Gamma,\gamma)\in C^{k+1}(S^2T^*Y)\times C^k(T^*Y\otimes {\rm Ad}\,\mathfrak l)$  there exists a EYM field $(g,\omega)$ such that $(\partial_\infty g,\omega\big|_Y)=([x^2g_0\big|_Y+\Gamma],\omega_0\big|_Y+\gamma)$. Morever, $(g,\omega)$ is unique up to the action of the gauge group fixing the boundary data.   
		\end{theorem}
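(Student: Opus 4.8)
The plan is to reduce the EYM system to an elliptic problem by a DeTurck-type gauge-breaking adapted to the Bianchi--Coulomb gauge, solve the resulting equation by the implicit function theorem using the decoupling of the linearization at a trivial configuration, and then recover a genuine EYM field by an off-shell Bianchi--Coulomb argument. Throughout I write a configuration as in (\ref{decomp:data}), $(g,\omega)=(\hat g_0,\hat\omega_0)+(A,a)$, with $\hat g_0=g_0+e_x(\Gamma)$, $\hat\omega_0=\omega_0+\mathfrak e(\gamma)$, and $(A,a)$ in the interior weighted spaces $x^\delta C^{k+1,\alpha}_0\times x^\delta C^{k,\alpha}_0$; by (\ref{bd:rest}) every such $(g,\omega)$ already carries the boundary data $([x^2g_0|_Y+\Gamma],\omega_0|_Y+\gamma)$, so realizing the data amounts to solving the field equations (\ref{eym:field:rew}) in $(A,a)$ for small $(\Gamma,\gamma)$.

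First I would introduce the gauge-broken operator $\mathcal P=(\mathcal P_1,\mathcal P_2)$ on $\mathscr I^{(k),\alpha}_\delta$, obtained by adding to (\ref{eym:field:rew}) the gauge-breaking terms
\[
\mathcal P_1=\prescript{g}{}{\rm Ric}+ng-\widetilde K_{g,\omega}+\prescript{g}{}{\delta}^*\,\prescript{\hat g_0}{}{\mathcal B}(g-\hat g_0),\qquad
\mathcal P_2=\prescript{g}{}{d}^*_\omega\Omega_\omega+d_\omega\,\prescript{\hat g_0}{}{d}^*_{\hat\omega_0}(\omega-\hat\omega_0),
\]
mapping into weighted spaces of two (respectively, two) fewer derivatives. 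Linearizing at $(g_0,\omega_0)$ and using the decoupling of Lemma \ref{differ:cal}---which holds precisely because $\widetilde K_{g,\omega}$ depends quadratically on $\Omega_\omega$ while $\Omega_{\omega_0}=0$, so the modified stress-energy tensor contributes nothing to either linearization (Remarks \ref{K:quad} and \ref{inc:discuss})---the added terms exactly cancel the Lie-derivative term $-\prescript{g_0}{}{\delta}^*\prescript{g_0}{}{\mathcal B}A$ in (\ref{exp:met:2}) and the term $-d_{\omega_0}\prescript{g_0}{}{d}^*_{\omega_0}a$ in (\ref{exp:con:2}). Hence the interior differential of $\mathcal P$ at $(g_0,\omega_0)$ becomes the block-diagonal operator $\tfrac12\,\prescript{g_0}{}{\Delta}_{(n)}\oplus\prescript{g_0}{}{\Delta}^1_{\omega_0}$.

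Next I would check that this block-diagonal operator is an isomorphism on the relevant weighted H\"older scale for $1<\delta<2$. The first block is invertible on $0<\delta<n$ by Corollary \ref{cor:rel}, using the assumed non-degeneracy of $g_0$ as a PE metric. The second block is Fredholm on $(1,n-1)$ by Theorem \ref{usula:inv}; since its normal operator coincides with that of the geometric Laplacian $\prescript{g_0}{}{\Delta}^1$, whose $L^2$-kernel on $\mathbb H^{n+1}$ is trivial, and since $\ker\prescript{g_0}{}{\Delta}^1_{\omega_0}|_{L^2}$ is trivial by the assumed non-degeneracy of the flat connection $\omega_0$, Theorem \ref{th:usef:geo} gives invertibility. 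As $n\geq 3$ forces $1<\delta<2$ into the common non-indicial range $(0,n)\cap(1,n-1)$, the differential $D\mathcal P$ is an isomorphism, and the implicit function theorem produces, for each sufficiently small $(\Gamma,\gamma)$, a unique small $(A,a)$ solving $\mathcal P(g,\omega)=0$ and depending smoothly on $(\Gamma,\gamma)$.

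The main obstacle is the final step: upgrading this solution of the gauge-broken system to a genuine EYM field, which is where the off-shell identities are indispensable. Applying $\prescript{g}{}{\mathcal B}$ to $\mathcal P_1=0$ and using that $\prescript{g}{}{\mathcal B}(\prescript{g}{}{\rm Ric}+ng-\widetilde K_{g,\omega})=0$ holds off-shell---by the contracted Bianchi identity (\ref{bian:id:2}) together with Lemma \ref{usef:lem} and Remark \ref{off-shell}---leaves $\prescript{g}{}{\mathcal B}\,\prescript{g}{}{\delta}^*\big[\prescript{\hat g_0}{}{\mathcal B}(g-\hat g_0)\big]=0$; as $g$ is ALH and close to the PE metric $g_0$, the perturbation statement of Theorem \ref{bian:surj} gives that $\prescript{g}{}{\mathcal B}\prescript{g}{}{\delta}^*$ has trivial kernel, forcing the Bianchi gauge term to vanish and hence the first field equation to hold. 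Similarly, applying $\prescript{g}{}{d}^*_\omega$ to $\mathcal P_2=0$ and invoking the off-shell identity (\ref{off:shell:con}) of Remark \ref{gaug:con:0} leaves $\prescript{g}{}{\Delta}^0_\omega\big[\prescript{\hat g_0}{}{d}^*_{\hat\omega_0}(\omega-\hat\omega_0)\big]=0$, whence Theorem \ref{lap:twist} forces the Coulomb gauge term to vanish and the second field equation holds. Thus $(g,\omega)$ is a genuine EYM field lying in the Bianchi--Coulomb slice $\mathscr S_{(g_0,\omega_0)}$ and carrying the prescribed boundary data. Uniqueness up to gauge then follows from Proposition \ref{gauge:bona}: any nearby EYM field with the same boundary data is carried by a unique element of $\mathscr G^{(k),\alpha}_\delta$ into the slice, where---being in the gauge---it solves $\mathcal P=0$ and hence coincides with the solution just constructed. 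I expect the delicate part to be verifying that the off-shell identities and the perturbed invertibility in Theorems \ref{bian:surj} and \ref{lap:twist} apply uniformly as the background $(\hat g_0,\hat\omega_0)$ and the solution $(g,\omega)$ vary with the boundary data.
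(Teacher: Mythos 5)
Your proposal is correct and follows essentially the same route as the paper: the gauge-broken operator you introduce is exactly the paper's map $\mathscr P$ in (\ref{math:P}), the block-diagonal linearization via the quadratic dependence of $\widetilde K_{g,\omega}$ on $\Omega_\omega$ is the paper's Lemma \ref{differ:cal}, the invertibility inputs (Corollary \ref{cor:rel}, Theorems \ref{usula:inv}, \ref{bian:surj} and \ref{lap:twist}) feed the same Implicit Function Theorem argument as Proposition \ref{comp:main}, and the off-shell Bianchi--Coulomb upgrade to a genuine EYM field together with uniqueness via the slice of Proposition \ref{gauge:bona} mirrors the paper's reduction to the gauged problem (\ref{dirich}). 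The only difference is cosmetic: you solve first and verify the gauge afterwards, while the paper establishes the equivalence $\mathscr P^{-1}(0,0)=\{\text{EYM fields in the slice}\}$ before invoking the Implicit Function Theorem.
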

		
		To prove this we fix $(g_0,\omega_0)$ as in the theorem and consider the nonlinear, smooth map 
		\[
		\left(
		\begin{array}{c}
			g\\
			\omega
		\end{array}
		\right)
		=
		\left(
		\begin{array}{c}
			g_0+e_x(\Gamma)+A\\
			\omega_0+\mathfrak e(\gamma)+a
		\end{array}
		\right)
		\stackrel{\mathscr P}{\mapsto}
		\left(
		\begin{array}{c}
			\mathscr Q(g,\omega)\\
			\mathscr R(g,\omega)
		\end{array}
		\right)	,
		\]
		where we make use of the decomposition in (\ref{decomp:data}) and
		\begin{equation}\label{math:P}
			\left(
			\begin{array}{c}
				\mathscr Q(g,\omega)\\
				\mathscr R(g,\omega)
			\end{array}
			\right)	
			=\left(
			\begin{array}{c}
				\prescript{g}{}{\rm Ric}+ng-\widetilde K_{g,\omega}+\prescript{g}{}{\delta}^*\prescript{g_0+e_x(\Gamma)}{}{\mathcal B}A\\
				\prescript{g}{}{d}^*_\omega\Omega_\omega+d_\omega\prescript{g_0+e_x(\Gamma)}{}{d}^*_{\omega_0+\mathfrak e(\gamma)}A
			\end{array}
			\right).
		\end{equation}
		If $\Gamma$, $A$, $\gamma$ and $a$ are taken sufficiently small,  we may assume that this is defined in a small neighborhood $\mathscr W$ of $(g_0,\omega_0)$ in  $\mathscr W_{(g_0,\omega_0)}$; see Proposition \ref{gauge:bona}.		
		Notice that $\mathscr P(g_0,\omega_0)=(0,0)$. 
		Clearly, if $(g,\omega)$ is EYM and lies in the slice $\mathscr S_{(g_o,\omega_0)}$ then $(g,\omega)\in \mathscr P^{-1}(0,0)$. 
		Conversely, assume that $(g,\omega)\in \mathscr P^{-1}(0,0)$, so that
		\begin{equation}\label{conv:gauge}
			\left\{
			\begin{array}{c}
				\prescript{g}{}{\rm Ric}+ng-\widetilde K_{g,\omega}+\prescript{g}{}{\delta}^*\prescript{g_0+e_x(\Gamma)}{}{\mathcal B}A=0\\
				\prescript{g}{}{d}^*_\omega\Omega_\omega+d_\omega\prescript{g_0+e_x(\Gamma)}{}{d}^*_{\omega_0+\mathfrak e(\gamma)}a=0	
			\end{array}
			\right.
		\end{equation}
		Applying $\prescript{g}{}{\mathcal B}$ to the first equation above and using (\ref{bian:id:2}) and Lemma \ref{usef:lem}  we see that $\prescript{g}{}{\mathcal B}\prescript{g}{}{\delta}^*\prescript{g_0+e_x(\Gamma)}{}{\mathcal B}A=0$. Since $g-g_0=e_x(\Gamma)+A$ is small we may apply Theorem \ref{bian:surj} to conclude that $\prescript{g_0+e_x(\Gamma)}{}{\mathcal B}A=0$.  A similar argument using the second equation in (\ref{conv:gauge}), the identity $\prescript{g}{}{d}^*_\omega	\prescript{g}{}{d}^*_\omega\Omega_\omega=0$ in (\ref{off:shell:con}), which holds {\em off-shell}, and Theorem \ref{lap:twist} allows us to conclude that $\prescript{g_0+e_x(\Gamma)}{}{d}^*_{\omega_0+\mathfrak e(\gamma)}a=0$; compare with \cite[Proposition 36]{usula2021yang}. Thus, $(g,\omega)\in \mathscr S_{(g_o,\omega_0)}$ and going back to (\ref{conv:gauge}) we see that $(g,\omega)$ is EYM as well. 
		The upshot here is that proving Theorem \ref{main} is equivalent to solving the gauged boundary value problem
		
		\begin{equation}\label{dirich}
			\left\{
			\begin{array}{l}
				\mathscr P_{(g_0,\omega_0)}(g,\omega)=0, \,\,(g,\omega)\in \mathscr I_{\delta}^{k,\alpha}\\
				(g,\omega)\in \mathscr S_{(g_0,\omega)}\\
				(\partial_\infty g,\omega\big|_Y)=([x^2g_0|_Y+\Gamma],\omega_0\big|_Y+\gamma)	
			\end{array}
			\right.
		\end{equation}
		for $(g,\omega)$ close to $(g_0,\omega_0)$. 
		
		\begin{lemma}\label{differ:cal}	
			If $(g_0,\omega_0)$ is trivial then there holds
			\[
			D\mathscr P_{(g_0,\omega_0)}
			\left(
			\begin{array}{c}
				(e_x(\Gamma),A)\\
				(\mathfrak e(\gamma),a)
			\end{array}
			\right)=
			\left(
			\begin{array}{cc}
				\frac{1}{2}\left(\prescript{g_0}{}{\mathscr L}(e_x(\Gamma))+\prescript{g_0}{}{\Delta}_nA\right)
				& 0\\
				0 & 
				\prescript{g_0}{}{d}^*_{\omega} d_{\omega}\mathfrak e(\gamma)+\prescript{g_0}{}{\Delta}^1_{\omega_0}a
			\end{array}
			\right),
			\]
			where $\prescript{g_0}{}{\mathscr L}$ is the operator appearing in (\ref{exp:met:2}). 
			
		\end{lemma}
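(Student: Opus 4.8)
The plan is to compute the full differential $D\mathscr P_{(g_0,\omega_0)}$ one block at a time, organizing the four entries according to whether we differentiate the metric equation $\mathscr Q$ or the connection equation $\mathscr R$, and whether we move in the metric direction $h:=e_x(\Gamma)+A$ or the connection direction $b:=\mathfrak e(\gamma)+a$. The entire content of the lemma is that the two off-diagonal blocks vanish at a trivial configuration, and this is forced by the structural fact recorded in Remark \ref{K:quad}: the modified stress-energy tensor $\widetilde K_{g,\omega}$ depends on $\omega$ only through $\Omega_\omega$, and that dependence is \emph{quadratic}. Since triviality means $\Omega_{\omega_0}=0$, both the metric- and connection-linearizations of $\widetilde K$ vanish identically at $(g_0,\omega_0)$, and this single observation drives the decoupling.

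First I would dispose of the off-diagonal blocks. For the upper-right block $D_\omega\mathscr Q$, note that among the three summands of $\mathscr Q$ the Einstein expression $\prescript{g}{}{\rm Ric}+ng$ and the gauge term $\prescript{g}{}{\delta}^*\prescript{g_0+e_x(\Gamma)}{}{\mathcal B}A$ carry no $\omega$-dependence whatsoever, while the $\omega$-linearization of $\widetilde K_{g,\omega}$ vanishes by the quadratic dependence just described; hence $D_\omega\mathscr Q=0$. For the lower-left block $D_g\mathscr R$, I would observe that both summands of $\mathscr R$ carry a factor that vanishes at the base point: the term $\prescript{g}{}{d}^*_\omega\Omega_\omega$ has the factor $\Omega_\omega$, which equals $\Omega_{\omega_0}=0$, so differentiating the metric-dependent operator $\prescript{g}{}{d}^*_\omega$ applied to this vanishing section produces nothing; and the gauge term $d_\omega\prescript{g_0+e_x(\Gamma)}{}{d}^*_{\omega_0+\mathfrak e(\gamma)}a$ carries the factor $a$, which is zero at the base point, so its metric-variation vanishes as well. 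Thus $D_g\mathscr R=0$.

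For the diagonal blocks I would invoke the linearizations already recorded in the text together with the gauge-term cancellations. In the metric-metric block, (\ref{exp:met:2}) gives $\prescript{g_0}{}{\mathscr L}h$ for the linearization of $g\mapsto\prescript{g}{}{\rm Ric}+ng$, and the gauge term linearizes to $\prescript{g_0}{}{\delta}^*\prescript{g_0}{}{\mathcal B}A$ (the operator-coefficient variations drop since $A=0$ at the base point); by Lemma \ref{div:adj} this equals $\tfrac12\mathbb L_{(\prescript{g_0}{}{\mathcal B}A)^\sharp}g_0$, which precisely cancels the offending Lie-derivative term hidden in $\prescript{g_0}{}{\mathscr L}A$, leaving the elliptic operator $\prescript{g_0}{}{\Delta}_n$ acting on $A$, while the boundary-data direction $e_x(\Gamma)$ (untouched by the gauge term) retains the Poincar\'e-Einstein linearization $\prescript{g_0}{}{\mathscr L}$. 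In the connection-connection block, (\ref{exp:con:2}) gives $\prescript{g_0}{}{\Delta}^1_{\omega_0}b-d_{\omega_0}\prescript{g_0}{}{d}^*_{\omega_0}b+b^\sharp\iprod\Omega_{\omega_0}$ for the linearization of $\omega\mapsto\prescript{g}{}{d}^*_\omega\Omega_\omega$; the curvature term drops since $\Omega_{\omega_0}=0$, the linearized gauge term $d_{\omega_0}\prescript{g_0}{}{d}^*_{\omega_0}a$ cancels the matching piece $-d_{\omega_0}\prescript{g_0}{}{d}^*_{\omega_0}a$ to leave $\prescript{g_0}{}{\Delta}^1_{\omega_0}$ on $a$, and on the ungauged direction $\mathfrak e(\gamma)$ one rewrites $\prescript{g_0}{}{\Delta}^1_{\omega_0}-d_{\omega_0}\prescript{g_0}{}{d}^*_{\omega_0}=\prescript{g_0}{}{d}^*_{\omega_0}d_{\omega_0}$.

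I expect the main obstacle to be the careful bookkeeping in the lower-left block $D_g\mathscr R$: one must expand the product rule applied both to $\prescript{g}{}{d}^*_\omega\Omega_\omega$ and to the gauge term and verify that every surviving term still carries one of the vanishing factors $\Omega_{\omega_0}$ or $a$, rather than producing a spurious metric-dependent contribution through the Hodge star implicit in $\prescript{g}{}{d}^*_\omega$. Once this is confirmed, the block-diagonal form follows, and the two diagonal entries are exactly the gauge-fixed Einstein and Yang-Mills operators whose mapping properties were established in Section \ref{inv:oper}.
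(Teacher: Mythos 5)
Your proposal is correct and follows essentially the same route as the paper's proof: block decomposition of $D\mathscr P_{(g_0,\omega_0)}$, vanishing of the off-diagonal blocks via the quadratic dependence of $\widetilde K_{g,\omega}$ on $\Omega_\omega$ (Remark \ref{K:quad}) together with flatness of $\omega_0$, cancellation of the Lie-derivative term in the metric-metric block using Lemma \ref{div:adj}, and the standard computation (as in \cite{usula2021yang}) for the connection-connection block. The only difference is that you spell out the lower-left block and the connection-connection cancellation explicitly, where the paper is terser and defers to \cite{usula2021yang}; this is a matter of detail, not of method.
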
	
		
		\begin{proof}
			We write symbolically
			\begin{equation}\label{symb:part}
				D\mathscr P_{(g_0,\omega_0)}=
				\left(
				\begin{array}{cc}
					\partial_g\mathscr Q
					& \partial_\omega \mathscr Q\\
					\partial_g \mathscr R & \partial_\omega \mathscr R  
				\end{array}
				\right),
			\end{equation}
			where for convenience we omit $(g_0,\omega_0)$ in the partial derivatives on the right-hand side. We now observe that, 
			due to Remark \ref{K:quad} and the flatness of $\omega_0$, the terms $-\widetilde K_{g,\omega}$ 
			and  $\prescript{g}{}{d}^*_{\omega}\Omega_\omega$ in (\ref{math:P}) do not contribute to $\partial_\omega \mathscr Q$ and to $\partial_{g}\mathscr R$, respectively.
			Thus, the off-diagonal terms in (\ref{symb:part}) vanish. By the same reason, the term $-\widetilde K_{g,\omega}$ does not contribute to $\partial_g \mathscr Q$, so if we use (\ref{exp:met:2}) with $g=g_0$ and $h=e_x(\Gamma)+A$ we obtain
			\begin{eqnarray*}
				\partial_g\mathscr Q
				& = & 
				\frac{1}{2}\left(\prescript{g_0}{}{\mathscr L}(e_x(\Gamma))+\prescript{g_0}{}{\mathscr L}A\right)+ \prescript{g_0}{}{\delta}^*\prescript{g_0}{}{\mathcal B}A\\
				& = & \frac{1}{2}\left(\prescript{g_0}{}{\mathscr L}(e_x(\Gamma))+\prescript{g_0}{}{\Delta}_{(n)}A\right)-\frac{1}{2}\mathbb L_{{(\prescript{g_0}{}{\mathcal B}}A)^\sharp}+ \prescript{g_0}{}{\delta}^*\prescript{g_0}{}{\mathcal B}A\\
				& = & \frac{1}{2}\left(\prescript{g_0}{}{\mathscr L}(e_x(\Gamma))+\prescript{g_0}{}{\Delta}_{(n)}A\right),
			\end{eqnarray*}
			where we used Lemma \ref{div:adj} in the last step. Finally, the computation of the remaining entry in the right-hand side of (\ref{symb:part}) follows as in \cite{usula2021yang}. 
		\end{proof}
		
		The next result completes the proof of Theorem \ref{main}. 
		
		\begin{proposition}\label{comp:main}
			If $(g_0,\omega_0)$ is a trivial and non-degenerate EYM field then there exist neighborhoods 
			\begin{itemize}
				\item $\mathsf V$ of $(0,0)$ in $C^{k+1,\alpha}(S^2T^*Y)\times C^k(T^*Y\otimes {\mathfrak l})$,
				\item $\mathsf W$ of $(0,0)$ in $ x^\delta C^{k+1,\alpha}_0(S^2T^*\overline M)\times x^\delta C^{k,\alpha}_0
				(T^*{\overline M}\otimes \overline{{{\rm Ad}\,\mathfrak l}})$
			\end{itemize}
			and a unique smooth map $\mathsf a:\mathsf V\to\mathsf W$ such $\mathsf a(0,0)=(0,0)$ and for every $(\Gamma,\gamma)\in \mathsf U$, $(g_0,\omega_0)+(e_x(\Gamma),\mathfrak e(\gamma))+\mathsf a(\Gamma,\gamma)$ is a solution of (\ref{dirich}). 
		\end{proposition}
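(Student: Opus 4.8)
The plan is to solve the gauged problem (\ref{dirich}) by a direct application of the Implicit Function Theorem to the nonlinear map $\mathscr P=\mathscr P_{(g_0,\omega_0)}$ of (\ref{math:P}), treating the boundary data $(\Gamma,\gamma)$ as a parameter and the interior perturbation $(A,a)$ as the unknown. Concretely, I would regard $\mathscr P$ as a smooth map
\[
\mathscr P:\mathsf V\times\mathsf W\longrightarrow x^\delta C^{k-1,\alpha}_0(S^2\prescript{0}{}{T^*M})\times x^\delta C^{k-2,\alpha}_0(\Lambda^1\prescript{0}{}{T^*M}\otimes\overline{{\rm Ad}\,\mathfrak l}),
\]
where $\mathsf V$, $\mathsf W$ are the prescribed neighborhoods of the origin, and then differentiate only in the $\mathsf W$-direction. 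Smoothness of $\mathscr P$ between these weighted H\"older scales is routine, since the nonlinear dependence enters through $\prescript{g}{}{\rm Ric}$, $\widetilde K_{g,\omega}$ and $\Omega_\omega$, all smooth in the configuration, and we already recorded $\mathscr P(g_0,\omega_0)=(0,0)$.

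The decisive step is to show that the partial differential $\partial_{(A,a)}\mathscr P_{(g_0,\omega_0)}$ is a Banach space isomorphism, and here Lemma \ref{differ:cal} does the essential work: setting $(\Gamma,\gamma)=(0,0)$ (so $e_x(\Gamma)=0$ and $\mathfrak e(\gamma)=0$) kills the off-diagonal terms and identifies this partial differential with the \emph{block-diagonal} operator $(A,a)\mapsto\bigl(\tfrac12\,\prescript{g_0}{}{\Delta}_{(n)}A,\ \prescript{g_0}{}{\Delta}^1_{\omega_0}a\bigr)$. Invertibility thus decouples into two independent statements. For the metric block, the non-degeneracy of the PE metric $g_0$ combined with Corollary \ref{cor:rel} gives that $\prescript{g_0}{}{\Delta}_{(n)}$ is invertible on $x^\delta C^{k+1,\alpha}_0(S^2\prescript{0}{}{T^*M})$ for $\delta\in(0,n)$. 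For the connection block, Theorem \ref{usula:inv} supplies Fredholmness of $\prescript{g_0}{}{\Delta}^1_{\omega_0}$ for $\delta\in(1,n-1)$; since its normal operator agrees (by the argument in that proof) with that of the geometric Hodge Laplacian, Theorem \ref{th:usef:geo} upgrades this to invertibility precisely because the second non-degeneracy hypothesis $\ker\prescript{g_0}{}{\Delta}^1_{\omega_0}\big|_{L^2}=\{0\}$ removes the residual obstruction. The standing assumptions $n\geq 3$ and $1<\delta<2$ place this common weight inside both non-indicial intervals $(0,n)$ and $(1,n-1)$, so each block, hence the full partial differential, is an isomorphism.

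Granted this, the Implicit Function Theorem yields the unique smooth map $\mathsf a:\mathsf V\to\mathsf W$ with $\mathsf a(0,0)=(0,0)$ and $\mathscr P\bigl((g_0,\omega_0)+(e_x(\Gamma),\mathfrak e(\gamma))+\mathsf a(\Gamma,\gamma)\bigr)=0$ for all small $(\Gamma,\gamma)$. It then remains to verify that this zero is a genuine solution of (\ref{dirich}). The boundary condition is automatic: as $\mathsf a(\Gamma,\gamma)$ lies in $x^\delta C^{k+1,\alpha}_0\times x^\delta C^{k,\alpha}_0$ and hence decays at infinity, (\ref{bd:rest}) gives $(\partial_\infty g,\omega|_Y)=([x^2g_0|_Y+\Gamma],\omega_0|_Y+\gamma)$. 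The slice membership and the EYM equations then follow verbatim from the reduction already carried out after (\ref{conv:gauge}): applying $\prescript{g}{}{\mathcal B}$ to the first component and using (\ref{bian:id:2}), Lemma \ref{usef:lem} and Theorem \ref{bian:surj} forces the Bianchi gauge, while the off-shell identity (\ref{off:shell:con}) together with Theorem \ref{lap:twist} forces the Coulomb gauge, after which the gauge-fixing terms drop out and (\ref{conv:gauge}) collapses to (\ref{eym:field:rew}).

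The main obstacle I anticipate is not any single estimate but the \emph{confluence} required for the partial differential to be an isomorphism rather than merely Fredholm: one must exploit the decoupling of Lemma \ref{differ:cal} so that the two separate $L^2$ non-degeneracy conditions of Definition \ref{nondeg:conf} can each be invoked on its own block, and one must confirm that the single weight $\delta\in(1,2)$ simultaneously lies in the metric interval $(0,n)$ and the connection interval $(1,n-1)$ — it is exactly here that the hypothesis $n\geq 3$ is indispensable.
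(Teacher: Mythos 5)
Your proposal is correct and follows essentially the same route as the paper's proof: differentiate $\mathscr P$ only in the interior direction, use Lemma \ref{differ:cal} at $(\Gamma,\gamma)=(0,0)$ to obtain the block-diagonal operator $\bigl(\tfrac12\prescript{g_0}{}{\Delta}_{(n)},\ \prescript{g_0}{}{\Delta}^1_{\omega_0}\bigr)$, invoke Corollary \ref{cor:rel} and Theorem \ref{usula:inv} for the two blocks, and conclude by the Implicit Function Theorem. Your explicit upgrade of the connection block from Fredholm to invertible --- combining Theorem \ref{usula:inv} with Theorem \ref{th:usef:geo} and the non-degeneracy hypothesis $\ker\prescript{g_0}{}{\Delta}^1_{\omega_0}\big|_{L^2}=\{0\}$ --- together with the check that $\delta\in(1,2)$ lies in both non-indicial intervals, merely makes precise details that the paper's one-line citation leaves implicit.
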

		
		\begin{proof}
			By Lemma \ref{differ:cal},
			\[
			D\mathscr P_{(g_0,\omega_0)}
			\left(
			\begin{array}{c}
				(0,A)\\
				(0,a)
			\end{array}
			\right)=
			\left(
			\begin{array}{cc}
				\frac{1}{2}\prescript{g_0}{}{\Delta}_{(n)}A
				& 0\\
				0 & 
				\prescript{g_0}{}{\Delta}^1_{\omega_0}a,
			\end{array}
			\right),
			\]  
			and the result follows from the Implicit Function Theorem in view of Corollary \ref{cor:rel} and Theorem \ref{usula:inv}.
		\end{proof}
		
		The next result is a consequence of Theorem \ref{main} worth mentioning and corresponds to \cite[Corollary 39]{usula2021yang}, which by its turn justifies an expectation due to Witten \cite{witten1998antide}.  
		
		\begin{theorem}\label{witten}
			Let $(M,g_0)$ be a PE manifold satisfying  $H^1(\overline M,Y)=\{0\}$ and with $g_0$  non-degenarate. Also, let $\omega_0$ be the canonical flat connection  on a trivial principal bundle $P$ over $M$. Then the conclusion of Theorem \ref{main} holds. In particular, we may take $(M,g_0)=(\mathbb H^{n+1},g_{\mathbb H})$. 
		\end{theorem}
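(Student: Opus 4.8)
The plan is to deduce the theorem directly from Theorem \ref{main} by verifying that the configuration $(g_0,\omega_0)$ is a trivial and non-degenerate EYM field in the sense of Definition \ref{nondeg:conf}. Triviality is immediate: $g_0$ is PE by assumption, and $\omega_0$, being the canonical flat connection on $P$, satisfies $\Omega_{\omega_0}=0$. The first non-degeneracy condition, that $\ker\prescript{g_0}{}{\Delta}_{(n)}\big|_{L^2}$ be trivial, holds by the standing hypothesis that $g_0$ is non-degenerate as a PE metric. Hence the only thing left to establish is the second non-degeneracy condition, $\ker\prescript{g_0}{}{\Delta}^1_{\omega_0}\big|_{L^2}=\{0\}$, and this is precisely where the assumptions on $P$ and on $H^1(\overline M,Y)$ enter.

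To handle this, I would exploit that $P$ is trivial and $\omega_0$ is flat. Fixing a global trivialization $P\cong M\times L$ identifies ${\rm Ad}\,\mathfrak l$ with the trivial bundle $M\times\mathfrak l$, and under this identification $d_{\omega_0}$ acts componentwise as the ordinary exterior derivative. Consequently the twisted Hodge Laplacian decouples globally into a direct sum of $\dim\mathfrak l$ copies of the untwisted Hodge Laplacian $\prescript{g_0}{}{\Delta}^1$ on ordinary $1$-forms (this is the global counterpart on all of $M$, available thanks to flatness, of the decoupling of normal operators used in the proof of Theorem \ref{usula:inv}), so that
\[
\ker\prescript{g_0}{}{\Delta}^1_{\omega_0}\big|_{L^2}\cong\left(\ker\prescript{g_0}{}{\Delta}^1\big|_{L^2}\right)^{\oplus\dim\mathfrak l}.
\]

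It then remains to invoke the topological identification of the $L^2$-kernel. Since $n\geq 3$ we have $1<n/2$, so Theorem \ref{mazz:top} applies in degree $k=1$ and yields $\ker\prescript{g_0}{}{\Delta}^1\big|_{L^2}=H^1(\overline M,Y)=\{0\}$ by hypothesis. Thus $\ker\prescript{g_0}{}{\Delta}^1_{\omega_0}\big|_{L^2}=\{0\}$, the connection $\omega_0$ is non-degenerate as a flat connection, and Theorem \ref{main} applies, yielding the conclusion together with uniqueness up to the gauge group fixing the boundary data. For the final assertion, $(\mathbb H^{n+1},g_{\mathbb H})$ is non-degenerate by Corollary \ref{cor:rel}, while $H^1(\overline{\mathbb H}^{n+1},\mathbb S^n)=\{0\}$ follows from the long exact sequence of the pair $(B^{n+1},\mathbb S^n)$ together with the contractibility of the closed ball, so the general case covers it.

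I do not expect a serious obstacle, as the statement is essentially a corollary. The two points meriting care are the global decoupling of $\prescript{g_0}{}{\Delta}^1_{\omega_0}$, which rests squarely on triviality of $P$ together with flatness of $\omega_0$, and the dimensional restriction $n\geq 3$, needed both for Theorem \ref{mazz:top} in degree one and, as noted in the proof of Theorem \ref{usula:inv}, to ensure $n/2\in I_{\prescript{g_0}{}{\Delta}^1}$.
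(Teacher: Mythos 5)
Your proposal is correct and follows essentially the same route as the paper: non-degeneracy of $\omega_0$ via Theorem \ref{mazz:top} with $k=1$ and the hypothesis $H^1(\overline M,Y)=\{0\}$, then Theorem \ref{main}, with Corollary \ref{cor:rel} handling the hyperbolic case. The only difference is that you make explicit the global decoupling of $\prescript{g_0}{}{\Delta}^1_{\omega_0}$ into copies of the untwisted Hodge Laplacian (using triviality of $P$ and flatness of $\omega_0$), a step the paper's one-line proof leaves implicit; this is a legitimate and welcome filling-in of detail rather than a different argument.
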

		
		\begin{proof}
			By Theorem \ref{mazz:top} with $k=1$, the cohomological condition on  $(\overline M,Y)$ implies that $\omega_0$ is non-degenerate. The last assertion follows from Corollary \ref{cor:rel}.  	
		\end{proof}
		
		\section{Further results and questions}\label{further}
		
		There are at least two situations to which our main deformation result (Theorem \ref{main}) may be extended in a rather straightforward manner:
		\begin{itemize}
			\item As in \cite{mazzeo2006maskit} we may conceive a  scenario (perhaps more realistic in the context of certain applications such as orbifold degeneration \cite{girao2010orbifold}) in which the background PE manifold develops finitely many isolated conical singularities. Thus, it is natural to require that the deformed EYM configurations induced by slight perturbations of the boundary data should also display a similar behavior around these interior singular points. Since the mapping theory of generalized Laplacians around conical singularities is well understood \cite{lesch1997differential,mazzeo1991elliptic}, the appropriate version of Theorem \ref{main} in this slightly more general setting may be readily obtained without much extra effort.
			\item  We may replace the purely gravitational sector of the action (\ref{eym:action}), $\prescript{g}{}{\rm R}+n(n-1)$, by $\prescript{g}{}{\rm R}_{2k}+c_{k,n}$, where $\prescript{g}{}{\rm R}_{2k}$, $2\leq k<(n+1)/2$, is the $2k$-{\em Gauss-Bonnet curvature} of $g$ \cite{labbi2008variational} and the positive constant $c_{k,n}$ is suitably chosen. Upon extremization and a little algebraic manipulation, the corresponding field equations assume a form quite similar to (\ref{eym:field:rew}), in which $\prescript{g}{}{\rm Ric}+ng$ gets replaced by $\prescript{g}{}{\rm Ric}_{2k}+d_{k,n}g$, where $\prescript{g}{}{\rm Ric}_{2k}$ is the $2k$-{\rm Ricci tensor} of $g$ \cite{labbi2008variational} and $d_{k,n}>0$ is determined by the requirement that $\prescript{g_{\mathbb H}}{}{\rm Ric}_{2k}+d_{k,n}g_{\mathbb H}=0$. In general, the linearization  of this ``Lovelock map'' $g\mapsto \prescript{g}{}{\rm Ric}_{2k}+d_{k,n}g$ around an arbitrary metric is too complicated to be of any use (in fact, even after discarding the Lie derivative term coming from diffeomorphism invariance, it might remain non-elliptic as its principal symbol in general depends on the underlying curvature). However, as observed in \cite{delima2010deformations,caula2013deformation}, its Bianchi-gauged linearization around $g_{\mathbb H}$ equals $\prescript{g_{\mathbb H}}{}{\Delta}_{(n)}$ up to a non-zero multiplicative constant. Thus,  essentially the same argument as the one leading to Theorem \ref{main} implies the existence of nearby ``Lovelock-Yang-Mills'' fields around the trivial configuration $(g_{\mathbb H},\omega_0)$, where $\omega_0$ is any trivial connection, by slightly perturbing the corresponding boundary data and solving the associated Dirichlet problem at infinity. This may be viewed as a natural extension of \cite[Theorem 3.1]{albin2020poincare}, which establishes the existence of Poincar\'e-Lovelock metrics on the unit ball by similar methods.      
		\end{itemize} 
		
		We now briefly discuss a few, perhaps more sophisticated, lines of research which potentially may be incorporated to our basic Einstein-Yang-Mills setting. From an exclusively perturbative viewpoint, it may be worthwhile to consider the problem of extending Theorem \ref{main} to background Einstein metrics with a more complicated structure at infinity, as  those modeled on (possibly higher rank) symmetric spaces and treated in \cite{biquard2000metriques,biquard2006parabolic,biquard2011nonlinear,bahuaud2020geometrically}. We may also consider the prospect of gluing EYM configurations  along  the boundary data in  the line of the ``Maskit combination'' performed in \cite{mazzeo2006maskit}. Finally, in the spirit of \cite[Chapter 5]{bleecker2005gauge},
		we may enrich the action (\ref{eym:action}) by adding a density term involving covariant derivatives up to first order of a ``particle field'' $\psi$, a section of a fixed vector bundle associated to some orthogonal representation of $L$. This additional degree of freedom turns the second field equation in (\ref{eym:fields}) in-homogeneous, with the appearance  of an extra term in its right-hand side, the current $J_\omega(\psi)\in \mathcal A^1_{\rm Ad}(P,\mathfrak l)$, obtained by varying the particle density with respect to $\omega$. Virtually any question studied or mentioned so far may be rephrased in this more general setting and certainly deserves further investigation.

		\bibliographystyle{alpha}
		\bibliography{EYM}		
	\end{document}